\newcommand*\tasklabelformat[1]{#1)}
\numberwithin{equation}{section}
\newtheorem*{rep@theorem}{\rep@title}
\newcommand{\newreptheorem}[2]{%
\newenvironment{rep#1}[1]{%
 \def\rep@title{#2 \ref{##1}}%
 \begin{rep@theorem}}%
 {\end{rep@theorem}}}
\theoremstyle{theorem}
\newtheorem{thm}{Theorem}[section]
\newtheorem*{thm*}{Theorem}
\theoremstyle{definition}
\newtheorem{prop}[thm]{Proposition}
\newtheorem*{prop*}{Proposition}
\newtheorem{defn}[thm]{Definition}
\newtheorem{cor}[thm]{Corollary}
\newtheorem*{cor*}{Corollary}
\theoremstyle{remark}
\newtheorem{rem}[thm]{Remark}
\title{\vspace*{-1.5cm} Small eigenvalues of Toeplitz operators,
\\
Lebesgue envelopes and Mabuchi geometry}
\author
{Siarhei Finski
}
\date{}
\newcommand{\imun} {\sqrt{-1}}
\newcommand{\comp}{\mathbb{C}}
\newcommand{\real}{\mathbb{R}}
\newcommand{\nat}{\mathbb{N}}
\DeclareMathOperator*{\esssup}{ess\,sup}
\DeclareMathOperator*{\essinf}{ess\,inf}
\DeclareMathOperator*{\esssupp}{ess\,supp}
\newcommand{\enmr}[1]{\text{End}{(#1)}}
\newcommand{\ccal}{\mathscr{C}}
\newcommand{\dbar}{ \overline{\partial} }
\newcommand{\scal}[2]{\langle #1, #2 \rangle}
\DeclareFontFamily{OMX}{MnSymbolE}{}
\DeclareSymbolFont{MnLargeSymbols}{OMX}{MnSymbolE}{m}{n}
\DeclareFontShape{OMX}{MnSymbolE}{m}{n}{
    <-6>  MnSymbolE5
   <6-7>  MnSymbolE6
   <7-8>  MnSymbolE7
   <8-9>  MnSymbolE8
   <9-10> MnSymbolE9
  <10-12> MnSymbolE10
  <12->   MnSymbolE12
}{}
\DeclareFontShape{OMX}{MnSymbolE}{b}{n}{
    <-6>  MnSymbolE-Bold5
   <6-7>  MnSymbolE-Bold6
   <7-8>  MnSymbolE-Bold7
   <8-9>  MnSymbolE-Bold8
   <9-10> MnSymbolE-Bold9
  <10-12> MnSymbolE-Bold10
  <12->   MnSymbolE-Bold12
}{}
\let\llangle\@undefined
\let\rrangle\@undefined
\DeclareMathDelimiter{\llangle}{\mathopen}%
                     {MnLargeSymbols}{'164}{MnLargeSymbols}{'164}
\DeclareMathDelimiter{\rrangle}{\mathclose}%
                     {MnLargeSymbols}{'171}{MnLargeSymbols}{'171}
\newenvironment{sciabstract}{}
\begin{document}

\maketitle 

\vspace*{-0.7cm}

\vspace*{0.3cm}

\begin{sciabstract}
  \textbf{Abstract.} 
 	We study small eigenvalues of Toeplitz operators on polarized complex projective manifolds.
 	For Toeplitz operators whose symbols are supported on proper subsets, we prove the existence of eigenvalues that decay exponentially with respect to the semiclassical parameter.
 	We moreover, establish a connection between the logarithmic distribution of these eigenvalues and the Mabuchi geodesic between the fixed polarization and the Lebesgue envelope associated with the polarization and the non-zero set of the symbol.
 	As an application of our approach, we also obtain analogous results for Toeplitz matrices.
\end{sciabstract}

\pagestyle{fancy}
\lhead{}
\chead{Toeplitz operators, Lebesgue envelopes and Mabuchi geometry}
\rhead{\thepage}
\cfoot{}


\newcommand{\Addresses}{{
  \bigskip
  \footnotesize
  \noindent \textsc{Siarhei Finski, CNRS-CMLS, École Polytechnique F-91128 Palaiseau Cedex, France.}\par\nopagebreak
  \noindent  \textit{E-mail }: \texttt{finski.siarhei@gmail.com}.
}} 

\vspace*{0.25cm}

\par\noindent\rule{1.25em}{0.4pt} \textbf{Table of contents} \hrulefill

\vspace*{-1.5cm}

\tableofcontents

\vspace*{-0.2cm}

\noindent \hrulefill


\section{Introduction}\label{sect_intro}
	The primary aim of this paper is to examine the small eigenvalues of Toeplitz operators and their connections with Mabuchi geometry and Lebesgue envelopes.
	\par 
	Throughout the whole article, we fix a complex projective manifold $X$, $\dim X = n$, and an ample line bundle $L$ over it. 
	We fix a smooth positive Hermitian metric $h^L$ on $L$, and denote by ${\textrm{Hilb}}_k(h^L)$ the $L^2$-metric on $H^0(X, L^{\otimes k})$ induced by $h^L$, see (\ref{eq_defn_l2}).
	\par  
	For a fixed $f \in L^{\infty}(X)$ and $k \in \nat^*$, we denote by $T_k(f) \in {\enmr{H^0(X, L^{\otimes k})}}$ the \textit{Toeplitz operator with symbol} $f$.
	Recall that this means that $T_k(f) := B_k \circ M_k(f)$, where $B_k : L^{\infty}(X, L^{\otimes k}) \to H^0(X, L^{\otimes k})$ is the orthogonal (Bergman) projection to $H^0(X, L^{\otimes k})$, and $M_k(f) : H^0(X, L^{\otimes k}) \to L^{\infty}(X, L^{\otimes k})$ is the multiplication map by $f$, acting as $s \mapsto f \cdot s$, $s \in H^0(X, L^{\otimes k})$.
	\par 
	Toeplitz operators have recently found numerous applications in complex and algebraic geometry; see \cite{BerndtProb}, \cite{MaZhangSuperconnBKPubl}, \cite{FinToeplImm}, \cite{FinHNII} and \cite{FinSubmToepl}.
	Their spectral theory of Toeplitz operators is a classical subject, rooted in the seminal work of Boutet de Monvel-Guillemin \cite{BoutGuillSpecToepl}, where the weak convergence of their spectral measures is established. 
	This theory has been recently revisited and extended by the subsequent works on Bergman kernels, including those of Dai-Liu-Ma \cite{DaiLiuMa}, as well as Ma-Marinescu \cite{MaMarBTKah}, \cite{MaHol}.
	\par 
	Significant progress has been made in understanding the smallest eigenvalues of Toeplitz operators.
	The general case reduces to the study of Toeplitz operators with symbols satisfying an additional condition $\essinf_X f = 0$, a condition we will assume throughout.
	We also exclude the trivial case where $f = 0$ almost everywhere from our analysis by simply saying $f \neq 0$.
	\par 
	\begin{sloppypar}
	The asymptotics of the smallest eigenvalue $\lambda_{\min}(T_k(f))$, of $T_k(f)$, as $k \to \infty$, is dictated by the zero set of $f$ and the behavior of $f$ in its vicinity, see \cite{DelepSpec1}, \cite{DelepSpec2}, \cite{AnconaFloch}, \cite{DrewLiuMar1}.
	\end{sloppypar}
	\par 
	\begin{sloppypar}
	Drewitz-Liu-Marinescu in \cite[Theorem 1.23]{DrewLiuMar2} demonstrated that for an arbitrary $f \in L^{\infty}(X)$, $f \neq 0$, $\lambda_{\min}(T_k(f))$ decays no faster than exponentially in $k$, i.e. there is $d > 0$, verifying 
	\begin{equation}\label{eq_min_bndliumar}
		\lambda_{\min}(T_k(f)) \geq \exp(- d k),
	\end{equation}
	for $k \in \nat$ big enough.
	They further conjectured in \cite[Question 5.13]{DrewLiuMar2} that the exponential decay of the first eigenvalue occurs if and only if the symbol of the Toeplitz operator is supported on a proper subset.
	We confirm this conjecture in the present work for \textit{non-pathological} symbols by studying the logarithmic distribution of small eigenvalues.
	\end{sloppypar}
	\par 
	Before describing our results, let us describe the possible pathologies of the symbols.
	We denote $K := \esssupp f$, $Z := f^{-1}(0) \cap K$ and let $NZ := K \setminus Z$; here $NZ$ stands for non-zero.
	Note that the set $Z$ depends on the choice of the measurable representative of $f \in L^{\infty}(X)$; however, the sets $Z$ corresponding to two different representatives differ only by a Lebesgue negligible set, and in all subsequent constructions, we will rely solely on properties of $Z$ that are unaffected by the addition or removal of such Lebesgue negligible sets.
	\par 
	Immediately from the definitions, we see that the closure of $NZ$ coincides with $K$.
	Moreover, $NZ$ is Lebesgue non-negligible.
	Note, however, that the Lebesgue measure of $Z$, which we denote by $\lambda(Z)$, can take an arbitrary value in $[0, \lambda(K)[$.
	For example of a function $f$ with $\lambda(Z)$ non-zero, take the indicator function of a complement of a Smith-Volterra-Cantor set (which is a nowhere dense closed subset of positive measure) or the indicator function of an interior of an Osgood curve in $\mathbb{P}^1$ (a non-self-intersecting curve that has positive area).
	These are exactly the pathologies we will circumvent in \textit{some} of our subsequent results by imposing the condition $\lambda(Z) = 0$.
	Observe that these pathologies might still appear even for smooth functions $f$, as the theorem of Whitney, cf. \cite{CalderonZygmund}, tells that an arbitrary closed subset can be a zero set of a smooth function.
	\par 
	To state our results, we fix a Lebesgue-measurable subset $A \subset X$, which is Lebesgue non-negligible, and consider the \textit{Lebesgue envelope} $h^L_{{\rm{Leb}}, A}$ of a given metric $h^L$ on $L$, defined as
	\begin{equation}\label{defn_leb_env}
		h^L_{{\rm{Leb}}, A}
		=
		\inf \Big\{
			h^L_0 \text{ with psh potential }: h^L_0 \geq h^L \text{ almost everywhere on } A
		\Big\},
	\end{equation}
	where psh above stands for \textit{plurisubharmonic}, and almost everywhere condition above is with respect to the Lebesgue measure on $A$ (induced by the Lebesgue measure on $X$, i.e. of top dimension).
	Observe that adding or removing a Lebesgue negligible set from $A$ does not affect $h^L_{{\rm{Leb}}, A}$.
	\par 
	The envelope construction (\ref{defn_leb_env}), originally introduced by Guedj-Lu-Zeriahi \cite{GuedjLuZeriahEnv} for arbitrary non-pluripolar measures, is studied in Sections \ref{sect_determ}, \ref{sect_leb} in comparison with the psh envelopes of Siciak \cite{SiciakExtremal} and Guedj-Zeriahi \cite{GuedZeriGeomAnal}, see (\ref{defn_env}).
	For the moment, we only mention that similarly to psh envelopes, the assumption that $A$ is Lebesgue non-negligible assures that $h^L_{{\rm{Leb}}, A}$ has a bounded potential.
	But unlike psh envelopes, for which the potential becomes psh only after the upper semi-continous regularization, the potential of $h^L_{{\rm{Leb}}, A}$ is automatically psh, see Proposition \ref{prop_leb_env_id}.
	\par 
	Following the analogous notion in complex pluripotential theory, cf. \cite{KlimekBook}, we define a pair $(A, h^L)$ to be \textit{Lebesgue pluriregular} if $h^L_{{\rm{Leb}}, A}$ is continuous, see Proposition \ref{prop_example_leb_plur} for examples.
	\begin{sloppypar}
	\begin{thm}\label{thm_regul}
		Let $f \in L^{\infty}(X)$, $f \neq 0$, be such that $(K, h^L)$ is Lebesgue pluriregular and $\lambda(Z) = 0$.
		We let $c(f) := \max_{x \in X} \log(h^L(x) / h^L_{{\rm{Leb}}, K}(x))$.
		Then for any $\epsilon > 0$, there is $k_0 \in \nat$, such that for any $k \geq k_0$, we have
		\begin{equation}
			\exp(- (c(f) + \epsilon) k) \leq \lambda_{\min}(T_k(f)) \leq \exp(- (c(f) - \epsilon) k).
		\end{equation}
	\end{thm}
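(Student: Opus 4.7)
The plan is to use the variational principle $\lambda_{\min}(T_k(f)) = \inf_{0 \neq s \in H^0(X, L^{\otimes k})} \int_X f|s|^2 dv / \int_X |s|^2 dv$ (norms in the induced metric on $L^{\otimes k}$), coupled with the envelope principle: since $\tfrac{1}{2k} \log|s|^2$ is plurisubharmonic, the definition of the Lebesgue envelope combined with Lebesgue pluriregularity of $(K, h^L)$ forces the uniform pointwise estimate $|s(x)|^2 \leq \|s\|^2_{L^\infty(K)} \cdot (h^L(x)/h^L_{{\rm{Leb}}, K}(x))^k$ for every $s \in H^0(X, L^{\otimes k})$ and every $x \in X$, and the same inequality holds with any Lebesgue non-negligible set in place of $K$. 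The ratio on the right attains $e^{kc(f)}$ at the maximum point $x_0$ (which lies off $K$), and this is the sole source of the exponential rate.

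\textbf{Upper bound.} Fix $x_0$ achieving $c(f)$ and set $\psi := (1 - \eta) \phi_{{\rm{Leb}}, K} + \eta \phi$ for small $\eta > 0$, where $\phi, \phi_{{\rm{Leb}}, K}$ denote local potentials of $h^L, h^L_{{\rm{Leb}}, K}$. Then $\psi$ is strictly $L$-psh, $\psi \leq \phi$ a.e.\ on $K$, and $\psi(x_0) - \phi(x_0) = (1-\eta)c(f)/2$. Apply Hörmander's $L^2$-estimate for $\dbar$ in the weight $e^{-2k\psi}$ to a cut-off of the standard coherent state at $x_0$, producing $s_k \in H^0(X, L^{\otimes k})$ with $\int_X |s_k|^2 e^{-2k\psi} dv \leq C$ and $|s_k(x_0)|^2 e^{-2k\psi(x_0)} \gtrsim k^n$. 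Since $f$ is supported in $K$ and $\psi \leq \phi$ there, one has $\int_X f |s_k|^2 e^{-2k\phi} dv \leq \|f\|_\infty \int_K |s_k|^2 e^{-2k\psi} dv = O(1)$, while integrating the peak over a $k^{-1/2}$-ball around $x_0$ gives $\int_X |s_k|^2 e^{-2k\phi} dv \gtrsim e^{k(1-\eta)c(f)}$. Dividing and sending $\eta \to 0$ yields $\lambda_{\min}(T_k(f)) \leq e^{-k(c(f)-\epsilon)}$ for $k$ large.

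\textbf{Lower bound.} For small $\delta > 0$ set $V_\delta := \{x : f(x) \geq \delta\} \subset NZ$ and $c_\delta := \max_X \log(h^L/h^L_{{\rm{Leb}}, V_\delta})$. The hypothesis $\lambda(Z) = 0$ together with the envelope continuity results of Sections \ref{sect_determ}--\ref{sect_leb} yields $c_\delta \downarrow c(f)$ as $\delta \downarrow 0$. For a unit section $s$, $\int_X f|s|^2 dv \geq \delta \int_{V_\delta} |s|^2 dv$. Applying the envelope principle with $V_\delta$, $1 = \|s\|^2_{L^2} \leq \|s\|^2_{L^\infty(V_\delta)} \int_X (h^L/h^L_{{\rm{Leb}}, V_\delta})^k dv \leq \|s\|^2_{L^\infty(V_\delta)} v(X) e^{kc_\delta}$, whence $\|s\|^2_{L^\infty(V_\delta)} \geq e^{-kc_\delta}/v(X)$. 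Pick a Lebesgue density point $y \in V_\delta$ with $|s(y)|^2$ comparable to $\|s\|^2_{L^\infty(V_\delta)}$; the sub-mean-value property for the psh function $|s|^2$ on a Bergman-scale ball around $y$, combined with the positive density of $V_\delta$ at $y$, converts this into $\int_{V_\delta} |s|^2 dv \gtrsim k^{-N} e^{-kc_\delta}$ for some $N$. Thus $\lambda_{\min}(T_k(f)) \gtrsim \delta k^{-N} e^{-kc_\delta}$; choosing $\delta$ with $c_\delta < c(f) + \epsilon/2$ and then $k$ large gives the bound.

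\textbf{Main obstacle.} The delicate half is the lower bound, specifically the conversion of the $L^\infty$ lower bound on $V_\delta$ into an $L^2$ lower bound: the peak point must be chosen to be a Lebesgue density point of $V_\delta$ so that the sub-mean-value argument at Bergman scale captures a fixed positive fraction of the mass inside $V_\delta$, and this step fundamentally uses the Lebesgue pluriregularity hypothesis. The stability statement $c_\delta \to c(f)$ as $\delta \downarrow 0$ is itself non-trivial and relies on the envelope comparison results developed in the earlier sections of the paper.
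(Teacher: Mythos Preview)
Your route is a direct variational argument, quite different from the paper's proof, which deduces Theorem~\ref{thm_regul} from Theorem~\ref{thm_log2} (that $\tfrac1k\log T_k(f)$ is asymptotically Toeplitz in \emph{operator norm} with continuous symbol $\phi(h^L,NZ)$), the convergence of the extreme eigenvalue of a Toeplitz operator to the extremum of its continuous symbol, and the identity $\max_X|\phi(h^L,NZ)|=c(f)$ from Proposition~\ref{prop_cont_speed}. Your upper bound via an Ohsawa--Takegoshi/H\"ormander peak section for the weight $e^{-2k\psi}$ is essentially sound; the only wrinkle is that $\psi$ is merely continuous, so the claim $|s_k(x_0)|^2 e^{-2k\psi(x_0)}\gtrsim k^n$ and the subsequent ``integration over a $k^{-1/2}$-ball'' are not literally justified. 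This is easily repaired by bounding the denominator through the Bergman kernel of the \emph{smooth} weight $\phi$ instead: $\int_X|s_k|^2e^{-2k\phi}\,dv\ge|s_k(x_0)|^2e^{-2k\phi(x_0)}/B_{k,\phi}(x_0,x_0)\gtrsim k^{-n}e^{k(1-\eta)c(f)}$, using only that Ohsawa--Takegoshi gives $|s_k(x_0)|^2e^{-2k\psi(x_0)}\gtrsim 1$.

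The lower bound, however, has a genuine gap at the ``sub-mean-value plus density'' step. The sub-mean-value inequality for the subharmonic function $|s|^2$ (in a local holomorphic frame) bounds $\int_{B_r(y)}|s|^2$ from below by $|B_r|\cdot|s(y)|^2$; it gives no control on $\int_{B_r(y)\cap V_\delta}|s|^2$. Lebesgue density of $V_\delta$ at $y$ constrains only the volume ratio $|B_r\cap V_\delta|/|B_r|$, not the distribution of $|s|^2$ inside $B_r$, and nothing prevents that mass from concentrating on $B_r\setminus V_\delta$. Worse, the near-maximizing point $y=y_k$ depends on the section and hence on $k$, so the scale at which the density condition becomes effective cannot be matched uniformly to the Bergman scale $r\sim k^{-1/2}$. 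What is actually needed here is a Bernstein--Markov inequality for the measure $f\,dv$ on $(K,h^L)$: under Lebesgue pluriregularity and $\lambda(Z)=0$, Proposition~\ref{prop_plurireg} shows this measure is determining and $(K,h^L)$ is pluriregular, whence Theorem~\ref{thm_bbwn_determining} (or Theorem~\ref{thm_asympt_bm} directly) yields $\int_X f|s|^2\,dv \ge C^{-1}e^{-\epsilon k}\sup_K|s|^2$. Combined with your envelope inequality, which gives $\sup_K|s|^2\ge e^{-kc(f)}\|s\|_{L^2}^2/v(X)$, this closes the lower bound cleanly and makes the entire $V_\delta$ detour unnecessary.
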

	\begin{rem}\label{rem_thm_regul}
		Under the given assumptions, the Lebesgue envelope coincides with the psh envelope, see Proposition \ref{prop_plurireg}. 
		However, the distinction between these two notions becomes significant in the next result, where no assumption on Lebesgue pluriregularity is made.
	\end{rem}
	\end{sloppypar}
	\par 
	Another essential ingredient for this paper is the concept of a Mabuchi geodesic, which provides a specific method for constructing paths between metrics on $L$ with bounded plurisubharmonic potentials. 
	For the necessary background, see Section \ref{sect_mabuch}.
	We denote by $h^L_t$, $t \in [0, 1]$, the Mabuchi geodesic between $h^L_0 := h^L$ and $h^L_1 := h^L_{{\rm{Leb}}, NZ}$.
	While in general, $h^L_t$ is not smooth in $t \in [0, 1]$, one can still define its time derivative at $t := 0$ due to convexity.
	We denote by
	\begin{equation}\label{defn_phi}
		\phi(h^L, NZ) := (h^L_t)^{-1} \frac{d}{dt} h^L_t|_{t = 0}
	\end{equation}
	the speed of the geodesic at $t = 0$.
	We shall prove in Section \ref{sect_mabuch} that $\phi(h^L, NZ)$ is a non-positive bounded function, it is strictly negative away from $K$, and it vanishes on the Lebesgue dense points of $NZ$ (see (\ref{eq_leb_dens}) for the definition of Lebesgue dense points).
	In particular, the support of $\phi(h^L, NZ)$ and $f$ intersect at most along the topological boundary of $K$.
	The main result of this paper, detailed below, asserts that the logarithmic distribution of the small eigenvalues is determined by this function $\phi(h^L, NZ)$.
	\par 
	\begin{thm}\label{thm_distr}
		For any $f \in L^{\infty}(X)$, $f \neq 0$, and continuous $g: \real \to \real$, we have
		\begin{equation}
			\lim_{k \to \infty} \frac{1}{\dim H^0(X, L^{\otimes k})} \sum_{\lambda \in {\rm{Spec}} (T_k(f))} g\Big( \frac{\log(\lambda)}{k} \Big)
			=
			\frac{\int_X g(\phi(h^L, NZ)) c_1(L, h^L)^n}{\int_X c_1(L, h^L)^n}.
		\end{equation}
	\end{thm}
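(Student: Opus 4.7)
Proof plan.

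\textbf{Paragraph 1.} The claim is equivalent to the weak convergence $\mu_k \to \phi_*\mu_L$ of empirical measures $\mu_k := \frac{1}{N_k}\sum_{\lambda \in {\rm Spec}(T_k(f))}\delta_{\log\lambda/k}$, where $N_k := \dim H^0(X, L^{\otimes k})$ and $\mu_L := c_1(L, h^L)^n / \int_X c_1(L, h^L)^n$. The operator bound $\|T_k(f)\|_{{\rm op}} \leq \|f\|_\infty$ together with the Drewitz--Liu--Marinescu estimate (\ref{eq_min_bndliumar}) confines $\text{supp}(\mu_k)$ to a $k$-uniform compact interval $[-d, 1]$. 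On such a set, weak convergence is equivalent to pointwise convergence of the cumulative distribution functions at continuity points of the limit, so it suffices to prove $N_k(c)/N_k \to \mu_L(\{\phi \leq c\})$ for each continuity point $c$ of the right-hand side, where $N_k(c) := \#\{\lambda \in {\rm Spec}(T_k(f)) : \log \lambda/k \leq c\}$. By Courant--Fischer,
$$N_k(c) = \max\bigl\{\dim V : V \subseteq H^0(X, L^{\otimes k}), \ \tint_X f\, |s|^2_{h^L_k}\, dV \leq e^{ck} \|s\|^2_{L^2}\ \text{for all}\ s \in V\bigr\},$$
reducing everything to asymptotics of a generalized capacity.

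\textbf{Paragraph 2.} Since $\phi$ depends on $f$ only through $NZ$, I first reduce to indicator symbols. Given $\epsilon > 0$, set $NZ_\epsilon := \{f \geq \epsilon\}$, so that $\epsilon\, \chi_{NZ_\epsilon} \leq f \leq \|f\|_\infty\, \chi_{NZ}$, which by operator monotonicity yields $\epsilon\, T_k(\chi_{NZ_\epsilon}) \leq T_k(f) \leq \|f\|_\infty\, T_k(\chi_{NZ})$. Min-max then shows that the log-eigenvalues of $T_k(f)$, $T_k(\chi_{NZ_\epsilon})$ and $T_k(\chi_{NZ})$ pairwise differ by $O(1)$ shifts, which vanish after division by $k$. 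Combined with the continuity of the Lebesgue envelope under the monotone approximation $NZ_\epsilon \nearrow NZ$ (to be established in Section~\ref{sect_leb}) and the corresponding convergence $\phi_\epsilon \to \phi$ of Mabuchi speeds, this reduces the theorem to the case $f = \chi_A$ with $A := NZ$.

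\textbf{Paragraph 3.} For $f = \chi_A$, the main step identifies the log-scaled spectral distribution of $T_k(\chi_A)$ with $\phi_*\mu_L$. Introduce on $H^0(X, L^{\otimes k})$ the auxiliary Hermitian form $\widetilde H_k(s,s') := \int_X \langle s, s'\rangle_{(h^L_{{\rm Leb}, A})^k}\, dV$. Since $h^L_{{\rm Leb}, A}$ has a bounded psh potential, a quantization-of-Mabuchi-geodesic result in the spirit of Berndtsson--Phong--Sturm--Berman--Boucksom, adapted to the bounded-potential setting via the pluripotential characterization of $\phi$ from Section~\ref{sect_mabuch}, identifies the log-scaled distribution of the relative eigenvalues of $(\widetilde H_k, {\rm Hilb}_k(h^L))$ with $\phi_*\mu_L$. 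It remains to compare $\widetilde H_k$ with $\langle T_k(\chi_A)\cdot, \cdot\rangle$ on log-scale: the inequality $\widetilde H_k \geq \langle T_k(\chi_A)\cdot, \cdot\rangle$ on $A$-support is immediate from $h^L_{{\rm Leb}, A} \geq h^L$ on $A$, while the reverse comparison at the subexponential level is a Bernstein--Markov estimate for the pair $(A, h^L)$, following from the extremal characterization of $h^L_{{\rm Leb}, A}$ and standard $L^2$-estimates for bounded psh metrics.

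\textbf{Paragraph 4.} The principal obstacle is the quantization identification in Paragraph~3: classical Mabuchi-geodesic quantization treats smooth psh metrics, whereas here $h^L_{{\rm Leb}, A}$ has only a bounded potential and $\phi$ may fail to be continuous. To carry it out rigorously, I would, for each $c$, construct via Hörmander--Demailly $L^2$-estimates for $\dbar$ (with weight given by an appropriate truncation of the Mabuchi geodesic $h^L_t$ at the sublevel $\{\phi \leq c\}$) a subspace of $H^0(X, L^{\otimes k})$ of dimension $N_k\, \mu_L(\{\phi \leq c\}) + o(N_k)$ on which the two Hermitian forms differ by at most $e^{(c + \epsilon)k}$, with the dimension count supplied by Bergman kernel asymptotics for line bundles carrying bounded psh metrics. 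Possible discontinuities of $c \mapsto \mu_L(\{\phi \leq c\})$ correspond to plateaux of $\phi$ and hence to atoms of $\phi_*\mu_L$; they are absorbed by the continuity-point restriction in the CDF convergence.
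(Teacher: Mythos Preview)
Your proposal is on the right track conceptually---the comparison of ${\rm Hilb}_k(h^L, f\cdot\chi)$ with ${\rm Hilb}_k(h^L_{{\rm Leb},NZ},\chi)$ and the Mabuchi-geodesic quantization are exactly the two pillars the paper uses---but your packaging is less efficient and your Paragraph~4 is where the argument would actually be hard to close.

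The paper's route is as follows. It observes that $-\log T_k(f)$ is literally the transfer operator between ${\rm Hilb}_k(h^L)$ and ${\rm Hilb}_k(h^L,f\cdot\chi)$, then proves (Theorem~\ref{thm_equiv_2}) that the latter norm is $p$-equivalent to ${\rm Hilb}_k(h^L_{{\rm Leb},NZ},\chi)$ for every $p<\infty$, using a Bernstein--Markov-with-psh-weights argument (Theorem~\ref{thm_asympt_bm}) in which the non-negligible envelope $h^L_\mu$ replaces the psh envelope. By the exponential-metric-increasing inequality this yields $\|{-\log T_k(f)} - T_k(h^L,h^L_{{\rm Leb},NZ})\|_p = o(k)$. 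The transfer operator $T_k(h^L,h^L_{{\rm Leb},NZ})$ is then handled not by any direct H\"ormander construction but by Demailly-regularizing $h^L_{{\rm Leb},NZ}$ from below by smooth positive metrics $h^{L,i}_1$, invoking the already-known smooth case (Theorem~\ref{thm_trasnfer}), and passing to the limit via the Darvas--Lu--Rubinstein quantization of $d_p$ and Darvas--Lu continuity of geodesic speeds. The net result is packaged as ``$\frac{1}{k}\log T_k(f)$ is asymptotically Toeplitz of Schatten class with symbol $\phi(h^L,NZ)$'' (Theorem~\ref{thm_log2}); the weak convergence in Theorem~\ref{thm_distr} is then a one-line consequence of the general Proposition~\ref{prop_weak_convtt}.

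Two differences are worth noting. First, your reduction to indicator symbols in Paragraph~2 is unnecessary: the $p$-equivalence of norms in Theorem~\ref{thm_equiv_2} works directly for arbitrary $f\in L^\infty$, so the sandwich $\epsilon\chi_{NZ_\epsilon}\leq f\leq \|f\|_\infty\chi_{NZ}$ and the accompanying interchange-of-limits are avoidable overhead. Second, and more importantly, your Paragraph~4 proposes to carry out the bounded-potential quantization by building subspaces via H\"ormander $\dbar$-estimates with truncated geodesic weights and counting dimensions through Bergman asymptotics for bounded psh metrics. That programme is plausible in spirit but substantially harder to make rigorous than what the paper does: regularize the endpoint, use the smooth theorem, and control the error by $d_p(h^L_1,h^{L,i}_1)\to 0$. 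The Schatten-norm / $d_p$ framework is precisely the device that lets one bypass pointwise control of $\phi$ and CDF continuity-point bookkeeping; your approach would have to rebuild that machinery by hand.
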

	\par 
	Theorems \ref{thm_regul} and \ref{thm_distr} collectively imply that the asymptotic shape of eigenvalues having exponential decay depends solely on the non-zero set $NZ$ of the symbol and not on the nature of its decay in the vicinity of the boundary of $NZ$.
	This goes in sharp contrast with the properties of the smallest eigenvalue of a Toeplitz operator with a continuous symbol when $f^{-1}(0)$ consists of a single point, see \cite[\S 5.5, 5.6]{DrewLiuMar2}.
	\par 
	We now briefly discuss some qualitative consequences of the above results.
	The following result gives an answer to \cite[Question 5.13]{DrewLiuMar2}.
	\begin{cor}\label{cor_exist}
		For any $f \in L^{\infty}(X)$, $f \neq 0$, verifying $\lambda(Z) = 0$, we have $K \neq X$ if and only if there is $c > 0$, such that $\lambda_{\min}(T_k(f)) \leq \exp(- ck)$ for $k \in \nat$ big enough.
		Moreover, the proportion of exponentially small eigenvalues equals to the relative volume of $X \setminus K$, i.e.
		\begin{equation}\label{eq_exist}
			\lim_{\epsilon \to +0} \lim_{k \to \infty} \frac{\# \{ \lambda \in {\rm{Spec}} (T_k(f)) : \lambda \leq \exp(- \epsilon k) \} }{ \# {\rm{Spec}} (T_k(f)) }
			=
			\frac{\int_{X \setminus K} c_1(L, h^L)^n}{\int_X c_1(L, h^L)^n}.
		\end{equation}
	\end{cor}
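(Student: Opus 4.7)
The plan is to derive the corollary from Theorems \ref{thm_regul} and \ref{thm_distr}, combined with the structural properties of $\phi := \phi(h^L, NZ)$ recalled after (\ref{defn_phi}): $\phi$ is bounded and non-positive, strictly negative on $X \setminus K$, and it vanishes at every Lebesgue dense point of $NZ$. Since $\lambda(Z) = 0$, the set $NZ$ fills $K$ up to a Lebesgue negligible subset, and as $c_1(L, h^L)^n$ is mutually absolutely continuous with the Lebesgue measure, we obtain $\phi = 0$ $\mu$-a.e.\ on $K$ while $\phi < 0$ pointwise on $X \setminus K$, where $\mu := c_1(L, h^L)^n$.

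To prove (\ref{eq_exist}), I would sandwich the indicator $\mathbf{1}_{(-\infty, -\epsilon]}$ between continuous functions on $\real$ and apply Theorem \ref{thm_distr}. Writing $N_k := \dim H^0(X, L^{\otimes k})$ and $M_k(\epsilon) := \#\{\lambda \in {\rm{Spec}}(T_k(f)) : \lambda \leq e^{-\epsilon k}\}$, this yields
\begin{equation*}
\frac{\mu(\{\phi < -\epsilon\})}{\mu(X)} \,\leq\, \liminf_{k \to \infty} \frac{M_k(\epsilon)}{N_k} \,\leq\, \limsup_{k \to \infty} \frac{M_k(\epsilon)}{N_k} \,\leq\, \frac{\mu(\{\phi \leq -\epsilon\})}{\mu(X)}.
\end{equation*}
As $\epsilon \downarrow 0$, both extreme bounds increase to $\mu(\{\phi < 0\})/\mu(X) = \mu(X \setminus K)/\mu(X)$, using $\phi = 0$ $\mu$-a.e.\ on $K$ together with $\phi < 0$ off $K$. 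This delivers (\ref{eq_exist}).

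The ``iff'' claim then follows at once. If $K \neq X$, the open set $X \setminus K$ has positive Lebesgue measure, so (\ref{eq_exist}) yields $\lambda_{\min}(T_k(f)) \leq \exp(-\epsilon k)$ for some $\epsilon > 0$ and all large $k$. Conversely, if $K = X$, then $h^L$ itself belongs to the admissible family in (\ref{defn_leb_env}), hence $h^L_{{\rm{Leb}}, K} = h^L$; the pair $(X, h^L)$ is then trivially Lebesgue pluriregular and $c(f) = 0$, so Theorem \ref{thm_regul} provides $\lambda_{\min}(T_k(f)) \geq \exp(-\epsilon k)$ for every $\epsilon > 0$ and large $k$, ruling out exponential decay.

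The principal step is identifying the $\mu$-distribution of $\phi$ as an atom of mass $\mu(K)$ at $0$ together with a piece supported on a compact subinterval of $(-\infty, 0)$; this description rests essentially on the hypothesis $\lambda(Z) = 0$, which is precisely what prevents the $\mu$-mass of $\{\phi = 0\}$ from exceeding $\mu(K)$ and so makes the sandwich limit $\mu(\{\phi \leq -\epsilon\}) \to \mu(X \setminus K)$ collapse correctly. The bound (\ref{eq_min_bndliumar}) of Drewitz--Liu--Marinescu confines the logarithmic spectrum to a fixed bounded interval, which is what justifies passing between continuous test functions in Theorem \ref{thm_distr} and the indicator counts $M_k(\epsilon)$ without boundary loss.
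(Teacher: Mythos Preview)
Your argument is correct and follows essentially the same route as the paper: the paper packages the Portmanteau/sandwich step into Proposition~\ref{prop_nonzero_eig} and the identification $\{\phi<0\}=X\setminus K$ (up to a Lebesgue null set) into Proposition~\ref{prop_leb_negl}, while you carry out both steps inline using the properties of $\phi(h^L,NZ)$ stated after~(\ref{defn_phi}). The only cosmetic point is that your justification of $h^L_{{\rm Leb},X}=h^L$ when $K=X$ is slightly terse (membership in the admissible family only gives $h^L_{{\rm Leb},X}\leq h^L$; the reverse inequality uses that every competitor dominates $h^L$ a.e.), but this is precisely the content of Proposition~\ref{prop_leb_env_all}, which the paper invokes at the same spot.
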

	\begin{rem}
		The statement (\ref{eq_exist}) with Proposition \ref{prop_nonzero_eig} show that the fraction of eigenvalues that are exponentially small matches the fraction of eigenvalues that tend to zero.
	\end{rem}
	The next result states that in a certain sense the distribution of small eigenvalues depends solely on the geometry of the manifold away from the essential support of the symbol.
	For $A \subset X$, we denote below by $i(A)$ the subset of interior points $A$.
	\begin{cor}\label{cor_local}
		Assume that for two complex manifolds $X$, $X'$; $f \in L^{\infty}(X)$, $f' \in L^{\infty}(X')$ and ample line bundles $L$, $L'$ with positive metrics $h^L$, $h^{L'}$ the following holds.
		For $K := \esssupp f$, $K' := \esssupp f'$, there are neighborhoods $U$, $U'$ of $X \setminus i(K)$ and $X' \setminus i(K')$ respectively, so that there is a biholomorphism $p : U \to U'$, which extends to the holomorhic map between $L$ and $L'$, preserving $h^L$ and $h^{L'}$.
		Then the logarithmic distribution (in the sense of Theorem \ref{thm_distr}) of the small eigenvalues of respective Toeplitz operators $T_k(f)$ and $T_k(f')$ are asymptotically identical.
		If, moreover, both $(K, h^L)$ and $(K', h^{L'})$ are Lebesgue pluriregular, then the exponents of the growth of the smallest eigenvalues of  $T_k(f)$ and $T_k(f')$ from Theorem \ref{thm_regul} are identical.
	\end{cor}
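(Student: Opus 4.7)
The plan is to apply Theorems \ref{thm_distr} and \ref{thm_regul} to both systems $(X, L, h^L, f)$ and $(X', L', h^{L'}, f')$ and verify that the right-hand sides coincide under the given hypothesis. By Theorem \ref{thm_distr}, the logarithmic distribution on $X$ is the pushforward of the normalized volume $c_1(L, h^L)^n / \int_X c_1(L, h^L)^n$ under $\phi(h^L, NZ)$, with an analogous expression on $X'$; by Theorem \ref{thm_regul}, the exponent of the smallest eigenvalue is the pointwise maximum of $\log(h^L / h^L_{{\rm{Leb}}, K})$. Since $p$ preserves the metrized line bundles, we already have $p^{*} c_1(L', h^{L'})^n = c_1(L, h^L)^n$ on $U$. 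The task thus reduces to comparing $\phi(h^L, NZ)$ and $\log(h^L / h^L_{{\rm{Leb}}, K})$ with their primed counterparts on $U$.

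The central step is a locality claim for the Lebesgue envelope: under $p$, the restrictions $h^L_{{\rm{Leb}}, NZ}|_U$ and $h^{L'}_{{\rm{Leb}}, NZ'}|_{U'}$ are identified, and similarly for the envelopes attached to $K$ and $K'$. The heuristic is that the envelope's potential is psh with Monge-Amp\`ere mass supported inside $\overline{NZ} = K$; thus on $X \setminus K \subseteq U$ it is a maximal psh function whose boundary trace along $\partial K$ is transported by $p$, since $\partial K \subseteq X \setminus i(K) \subseteq U$. On $U \cap i(K)$, if any, the envelope equals $h^L$ at almost every point by the Lebesgue density property of $NZ$ in $K$ (together with Proposition \ref{prop_leb_env_id}), which is again transportable via $p$ applied to $h^L$. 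A two-sided comparison of the two infima defining the envelopes — where any candidate on $X$ restricts to one on $U$, and conversely a candidate constructed on $U$ can be glued across $\partial K$ to the potential of $h^L$ prescribed on $i(K)$ — then yields the identification. From this and the uniqueness of Mabuchi geodesics with prescribed endpoints, the geodesic speed $\phi(h^L, NZ)$ on $U$ corresponds to $\phi(h^{L'}, NZ')$ on $U'$ via $p$.

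With locality in hand, for the first conclusion split
\[
\int_X g(\phi(h^L, NZ)) c_1(L, h^L)^n = \int_{i(K)} g(\phi) c_1(L, h^L)^n + \int_{X \setminus i(K)} g(\phi) c_1(L, h^L)^n.
\]
On $i(K)$, the function $\phi$ vanishes at Lebesgue dense points of $NZ$, which form a subset of full measure in $K$ up to the set $Z$; for test functions $g$ adapted to the small-eigenvalue regime (vanishing near zero), the $i(K)$ contribution drops out, while the $X \setminus i(K)$ contribution matches its primed analog via $p$. The asymptotic identity of logarithmic distributions of small eigenvalues follows after the analogous split and normalization on the primed side. For the exponent part: under Lebesgue pluriregularity, $h^L_{{\rm{Leb}}, K}$ is continuous and equals $h^L$ on $K$, so the maximum defining $c(f)$ is attained on $X \setminus K \subseteq U$, and the locality claim gives $c(f) = c(f')$. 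The main obstacle is the locality of the Lebesgue envelope on $U$, for which one invokes the pluripotential-theoretic characterization of the envelope developed in Sections \ref{sect_determ}--\ref{sect_leb}, together with the vanishing of its Monge-Amp\`ere measure outside $K$, which allows the envelope on $X \setminus K$ to be reconstructed from its boundary trace on $\partial K$.
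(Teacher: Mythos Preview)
Your overall strategy coincides with the paper's: both reduce to showing that the geodesic speeds $\phi(h^L,NZ)$ and $\phi(h^{L'},NZ')$ are intertwined by $p$ on $U$, and then invoke Theorems~\ref{thm_regul} and~\ref{thm_distr}. Your splitting of the integral over $i(K)$ and $X\setminus i(K)$, together with the observation that only test functions $g$ vanishing near $0$ are relevant for ``small eigenvalues'', is correct and in fact more explicit than the paper, which simply cites Proposition~\ref{prop_local_geod} and leaves this unpacking to the reader.

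The substantive difference lies in how you propose to prove locality. You try to first establish that the Lebesgue envelopes $h^L_{{\rm Leb},NZ}|_U$ and $h^{L'}_{{\rm Leb},NZ'}|_{U'}$ correspond under $p$, via a two-sided gluing of candidate psh metrics across $\partial K$, and then invoke ``uniqueness of Mabuchi geodesics with prescribed endpoints'' to transport the geodesic. Both of these steps are problematic. The gluing of a psh candidate on $U'$ with the potential of $h^{L'}$ on $i(K')$ is not automatic: the two pieces need not patch to a global $\omega'$-psh function, and your sketch does not supply the mechanism (you would at minimum need a pointwise inequality on an overlap collar to run a $\max$-type gluing). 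More seriously, the uniqueness of Mabuchi geodesics is a statement about \emph{global} endpoints on $X$ and $X'$; knowing that the endpoints agree on $U$ via $p$ does not, by itself, force the geodesics to agree on $U$, since $p$ is not defined globally and the geodesic equation is not local.

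The paper sidesteps both issues by working directly with the Perron-type envelope description~(\ref{eq_env_geod}) of the geodesic $\hat u(u_0,u_1)$ on $X\times\mathbb{D}(e^{-1},1)$, rather than first isolating the endpoint envelope. The point (Proposition~\ref{prop_local_geod}) is that, using Proposition~\ref{prop_contact_set_closed} and the convexity bound~(\ref{eq_conv_geod}), the entire geodesic satisfies $h^L_t = h^L$ away from $U$ (and similarly on the primed side). Once the geodesic is pinned to the fixed metric outside $U$, candidates for the envelope on $X\times\mathbb{D}$ and on $X'\times\mathbb{D}$ can be transported back and forth through $p$ and the supremum matches. This single step replaces both your envelope-locality and geodesic-uniqueness arguments. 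If you want to repair your route, the missing ingredient is precisely this: that on $X\setminus U$ the geodesic (not just the endpoint) is constant equal to $h^L$, which is what allows candidates to be globalized after transport by $p$.
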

	Our last statement says that while by Theorem \ref{thm_regul} the smallest eigenvalue decays exponentially if $K \neq X$, upon changing the polarization, the exponent can be made arbitrary small.
	\begin{cor}\label{cor_small_exp}
		Let $f \in L^{\infty}(X)$, $f \neq 0$, be such that $(K, h^L)$ is Lebesgue pluriregular and $\lambda(Z) = 0$.
		Then for any $\epsilon > 0$, there is a positive smooth metric $h^L_0$ on $L$, such that over $K$, we have $h^L \geq h^L_0 \geq \exp(- \epsilon) h^L$, and the smallest eigenvalue $\lambda_{\min}(T^0_k(f))$, of the Toeplitz operator $T^0_k(f)$, associated with $h^L_0$ and $f$, satisfies $\exp(- \epsilon k) \leq \lambda_{\min}(T^0_k(f))$ for $k$ big enough.
	\end{cor}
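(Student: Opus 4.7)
The plan is to apply Theorem \ref{thm_regul} to a carefully constructed perturbation $h^L_0$ of $h^L$ for which the new constant $c^0(f)$ can be made smaller than $\epsilon/2$. Let $\phi^L$ and $\tilde{\phi}^L$ denote local psh potentials of $h^L$ and $\tilde{h}^L := h^L_{{\rm{Leb}}, K}$, set $\omega := c_1(L, h^L)$, and view $\psi := \tilde{\phi}^L - \phi^L$ as a globally defined function on $X$. By Lebesgue pluriregularity of $(K, h^L)$ and Proposition \ref{prop_plurireg}, $\tilde{h}^L$ coincides with the psh envelope and is continuous, so $\psi$ is a continuous, non-negative, $\omega$-plurisubharmonic function, vanishing on $K$ and satisfying $\sup_X \psi = c(f)$. (If $c(f) = 0$, simply take $h^L_0 := h^L$ and apply Theorem \ref{thm_regul} directly.)

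Fix a small parameter $\eta \in (0, 1)$ to be chosen below. By Demailly's regularization theorem applied to $\psi$, there exist smooth functions $\psi_j \searrow \psi$ with $\imun \partial \dbar \psi_j \geq -(1 + \delta_j) \omega$ and $\delta_j \to 0$; Dini's theorem gives uniform convergence, so $\eta_j := \|\psi_j - \psi\|_{L^\infty(X)} \to 0$, and $0 \leq \psi_j \leq \eta_j$ on $K$. Define $h^L_0$ by the local potential $\phi^L_0 := \phi^L + (1 - \eta) \psi_j$. For $j$ large enough that $(1 - \eta)(1 + \delta_j) < 1$, the curvature $\omega + (1 - \eta) \imun \partial \dbar \psi_j \geq (1 - (1 - \eta)(1 + \delta_j)) \omega$ is strictly positive, so $h^L_0$ is a smooth positive metric. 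On $K$, the inequalities $0 \leq \phi^L_0 - \phi^L \leq (1 - \eta) \eta_j$ give $\exp(-(1 - \eta) \eta_j) h^L \leq h^L_0 \leq h^L$, which in particular is the required sandwich as soon as $(1-\eta)\eta_j \leq \epsilon$.

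The key estimate on $c^0(f)$ comes from a comparison of obstacle sets: denoting by $\tilde{\phi}^L_0$ the potential of $(h^L_0)_{{\rm{Leb}}, K}$, the fact that $\psi_j \geq 0$ on $K$ makes every psh function $\leq \phi^L$ on $K$ a candidate for the new envelope, while every new candidate is $\leq \phi^L + (1-\eta)\eta_j$ on $K$. A translation argument then yields $\tilde{\phi}^L \leq \tilde{\phi}^L_0 \leq \tilde{\phi}^L + (1 - \eta)\eta_j$ globally on $X$, which combined with $\psi_j \geq \psi$ gives
\[
c^0(f) = \max_X (\tilde{\phi}^L_0 - \phi^L_0) \leq \eta \, c(f) + \eta_j.
\]
Choosing $\eta < \epsilon/(4 c(f))$ and then $j$ large enough to ensure $\eta_j < \epsilon/4$ and to preserve the curvature positivity, we obtain $c^0(f) < \epsilon/2$. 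Pluriregularity of $(K, h^L_0)$ transfers from that of $(K, h^L)$ since $\tilde{\phi}^L_0 - \phi^L$ is the $\omega$-psh envelope with continuous obstacle $(1 - \eta) \psi_j|_K$, and such envelopes inherit continuity from pluriregularity by the classical Walsh-type theorem. Applying Theorem \ref{thm_regul} to $h^L_0$ with tolerance $\epsilon/2$ then yields $\lambda_{\min}(T^0_k(f)) \geq \exp(-\epsilon k)$ for all $k$ sufficiently large. The main obstacle is the envelope comparison above together with the propagation of pluriregularity from $(K, h^L)$ to $(K, h^L_0)$: both rest on the Lipschitz dependence of Perron--Bremermann envelopes on their obstacles, which is standard in pluripotential theory but requires invoking results somewhat outside the immediate scope of Theorem \ref{thm_regul}.
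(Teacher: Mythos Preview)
Your approach is correct and follows the same strategy as the paper: produce a smooth positive metric $h^L_0$ close to the Lebesgue envelope, bound the resulting constant $c_0(f)$, and invoke Theorem~\ref{thm_regul}. The paper's implementation is, however, more direct. Instead of regularizing the difference $\psi$ and introducing the blending parameter $\eta$ to salvage strict positivity, the paper applies Demailly regularization to $h^L_{{\rm Leb},K}$ itself, obtaining strictly positive smooth metrics increasing to it; Dini's theorem (using continuity of $h^L_{{\rm Leb},K}$) gives uniform convergence, so one can pick $h^L_0$ with $h^L_{{\rm Leb},K}\ge h^L_0\ge e^{-\epsilon}h^L_{{\rm Leb},K}$ globally. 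The bound $c_0(f)\le\epsilon$ then falls out in one line from order-preservation of the envelope and the idempotence $(h^L_{{\rm Leb},K})_{{\rm Leb},K}=h^L_{{\rm Leb},K}$, with no two-parameter balancing. Your explicit remark that Lebesgue pluriregularity must transfer from $(K,h^L)$ to $(K,h^L_0)$ is a point the paper leaves implicit; both arguments rely on it, and it is indeed a standard (Walsh-type) fact that regularity of a compact set is independent of the continuous weight.
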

	\par 
	To conclude, we briefly outline our approach to the theorems mentioned above. 
	Our approach relies on analyzing the operators $\frac{1}{k} \log T_k(f)$; we show specifically that these operators coincide asymptotically with the Toeplitz operator associated with the symbol $\phi(h^L, NZ)$. 
	Once this is established, the above results would follow directly from the detailed analysis of $\phi(h^L, NZ)$, and some results on the spectrum of Toeplitz operators.
	\par 
	To relate $\log T_k(f)$ with Toeplitz operators, we interpret $\log T_k(f)$ as the transfer operator between the $L^2$-norm ${\textrm{Hilb}}_k(h^L)$ associated with $h^L$ and the $L^2$-norm associated with $h^L$ and the weight $f$. 
	We then extend a result of Berman-Boucksom-Witt Nyström \cite{BerBoucNys} concerning the asymptotic study of the $L^2$-norms associated with weights to the case when the measure induced by the weight is not necessarily determining and the essential support is not necessarily pluriregular.
	The relation between $\log T_k(f)$ and Toeplitz operators is then derived by extending the previous result of the author \cite{FinSubmToepl} on the asymptotics of the transfer operators between $L^2$-norms associated with smooth volume forms.
	\par 
	This article is organized as follows.
	In Section \ref{sect_log_toepl}, we establish the main results of the paper, modulo the statement about the asymptotics of $\log T_k(f)$.
	In Section \ref{sect_determ}, we study the non-negligible psh envelopes associated with an arbitrary non-pluripolar probability measure and compare them with psh envelopes.
	In Section \ref{sect_leb}, we specialize this theory to the case of Lebesgue envelopes. 
	In Section \ref{sect_bm}, we study the Bernstein-Markov property for the Lebesgue measure.
	In Sections \ref{sect_transf} and \ref{sect_bern_mark}, we establish that the operator $\frac{1}{k} \log T_k(f)$ is asymptotically Toeplitz.
	In Section \ref{sect_mabuch}, we study the Mabuchi geodesic connecting a metric with its Lebesgue envelope.
	In Section \ref{sect_growth_balls}, we apply our methods to study the growth of balls of holomorphic sections associated with $L^2$-norms supported on measurable subsets, extending a previous result of Berman-Boucksom \cite{BermanBouckBalls}.
	In Section \ref{sect_gen_toepl}, we prove the analogues of Theorems \ref{thm_regul}, \ref{thm_distr} for generalized Toeplitz operators and Toeplitz matrices.
	
	\paragraph{Notation.}  
	For $f : X \to \real$, we denote by $f^*$ (resp. $f_*$) the upper (resp. lower) semi-continuous regularization of $f$.
	Similar notations will be used for metrics on line bundles.
	For $f, g \in L^{\infty}(X)$, by $f = g$ we mean that the measurable representatives of $f, g$ coincide almost everywhere.
	Similarly, we say that $f$ is continuous if we can find a continuous representative in the class of $f$.
	\par 
	By a positive Hermitian metric on a line bundle we mean a smooth Hermitian metric with strictly positive curvature.
	Upon fixing a positive metric $h^L_0$ on $L$, one can identify the space of positive metrics on $L$ with the space of Kähler potentials of $\omega := 2 \pi c_1(L, h^L_0)$, consisting of $u \in \ccal^{\infty}(X, \real)$, such that $\omega_u := \omega + \imun \partial \dbar u$ is strictly positive, through the map 	
	\begin{equation}\label{eq_pot_metr_corr}
		u \mapsto h^L := e^{- u} \cdot h^L_0.
	\end{equation}
	The function $u$ will be called the potential of the metric $h^L$ (with respect to $h^L_0$).
	When working locally, we will, by a slight abuse of notation, also refer to a function $u$ as a potential if, in a \textit{holomorphic} local frame, $h^L$ can be expressed as $e^{- u}$.
	\par 
	For a Kähler form $\omega$ on $X$, we denote by ${\rm{PSH}}(X, \omega)$ the space of $\omega$-psh potentials, consisting of functions $\psi : X \to [- \infty, +\infty[$, which are locally the sum of a psh function and of a smooth function so that the $(1, 1)$-current $\omega + \imun \partial \dbar \psi$ is positive.
	We say that $\psi$ is strictly $\omega$-psh if the $(1, 1)$-current $\omega + \imun \partial \dbar \psi$ is strictly positive, i.e. there is $\epsilon > 0$, such that $\omega + \imun \partial \dbar \psi \geq \epsilon \omega$. 
	\par 
	\begin{sloppypar}
	For a metric $h^L$ on $L$ with bounded psh potential and a positive Borel measure $\mu$ on $X$, we denote by ${\textrm{Hilb}}_k(h^L, \mu)$ the positive semi-definite form on $H^0(X, L^{\otimes k})$ defined for arbitrary $s_1, s_2 \in H^0(X, L^{\otimes k})$ as follows
	\begin{equation}\label{eq_defn_l2}
			\langle s_1, s_2 \rangle_{{\textrm{Hilb}}_k(h^L, \mu)} = \int_X \langle s_1(x), s_2(x) \rangle_{{h^{L^{\otimes k}}}} \cdot d \mu(x).
	\end{equation}
	Remark that when $\mu$ is a non-pluripolar Borel measure (i.e. a Borel measure not charging the pluripolar subsets), the above form is positive definite.
	When the measure $\mu$ is induced by a non-negative $L^{\infty}(X)$-section $\chi$ of $\wedge^{n, n} T^* X$, we denote the associated form by ${\textrm{Hilb}}_k(h^L, \chi)$.
	For brevity, we use the notations ${\textrm{Hilb}}_k(h^L)$ for ${\textrm{Hilb}}_k(h^L, c_1(L, h^L)^n)$ and ${\textrm{Hilb}}_k(A, h^L)$ for ${\textrm{Hilb}}_k(h^L, 1_A \cdot c_1(L, h^L)^n)$, where $A \subset X$ is a fixed measurable subset and $1_A$ is the indicator function of it.
	For a closed Lebesgue non-negligible subset $K \subset X$, we denote by ${\textrm{Ban}}_k^{\infty}(K, h^L)$ the $L^{\infty}(K)$-norm on $H^0(X, L^{\otimes k})$ induced by $h^L$. 
	For brevity, we denote ${\textrm{Ban}}_k^{\infty}(X, h^L)$ by ${\textrm{Ban}}_k^{\infty}(h^L)$.
	\end{sloppypar}
	\par 
	The $p$-Schatten norm,  $\| \cdot \|_p$, $p \in [1, +\infty[$, is defined for an operator $A \in {\enmr{V}}$, of a finitely-dimensional Hermitian vector space $(V, H)$ as $\| A \|_p = (\frac{1}{\dim V} {\rm{Tr}}[|A|^p])^{\frac{1}{p}}$, where $|A| := (A A^*)^{\frac{1}{2}}$.
	It is evident that for any $p \in [1, +\infty[$, we have $\| \cdot \|_p \leq \| \cdot \|$, where $\| \cdot \|$ is the operator norm.
	For trivial reasons, we sometimes denote $\| \cdot \|$ by $\| \cdot \|_{+ \infty}$.
	\par 
	When we speak of a Lebesgue measure on a measurable subset $A \subset X$, we mean the ambient Lebesgue measure (i.e. of top dimension to the effect that the Lebesgue measure of a smooth curve in $\mathbb{P}^1$ is zero).
	For a measure $\mu$ on $X$ and a measurable function $f : X \to \real$, we denote by $\esssup_{\mu} f$ the essential supremum of $f$ with respect to $\mu$.
	For brevity, for a measurable subset $A \subset X$ we also note $\esssup_A f$ the essential supremum of $f$ with respect to the Lebesgue measure on $A$.
	Similar notations are used for the essential infimum.
	
	\paragraph{Acknowledgement.} 
	The author acknowledges the support of CNRS, École Polytechnique and the partial support of ANR projects QCM (ANR-23-CE40-0021-01), AdAnAr (ANR-24-CE40-6184) and STENTOR (ANR-24-CE40-5905-01).
	This work was completed during a visit to the Max Planck Institute for Mathematics in Bonn, and the author expresses gratitude to the institute's members for their warm hospitality and the institute for the support. 
	Special thanks are also due to Bingxiao Liu from the University of Cologne for several insightful discussions and for bringing \cite[Question 5.13]{DrewLiuMar2} to the author's attention.
	
	\section{Logarithm of a Toeplitz operator is asymptotically Toeplitz}\label{sect_log_toepl}
	This section is devoted to establishing all the results announced in the Introduction.
	To achieve this, we use a result concerning the asymptotics of the logarithm of a Toeplitz operator, the proof of which we defer to Section \ref{sect_transf}.
	For this, we need \cite[Definition 1.5]{FinSubmToepl}, which we recall below.
	\begin{defn}\label{defn_toepl_schatten}
		We say that $T_k \in {\enmr{H^0(X, L^{\otimes k})}}$, $k \in \nat$, form an \textit{asymptotically Toeplitz operator of Schatten class with symbol $f \in L^{\infty}(X)$} if there is $C > 0$, so that for any $k \in \nat$, we have $\| T_k \| \leq C$, and for any $\epsilon > 0$, $p \in [1, +\infty[$, there is $k_0 \in \nat$, such that for any $k \geq k_0$,
		\begin{equation}\label{eq_thm_log2}
			\big\|
				T_k
				-
				T_k(f)
			\big\|_p
			\leq
			\epsilon.
		\end{equation}
		If we can even take $p = + \infty$ above and $f$ is continuous, we say that $T_k$, $k \in \nat$, form an \textit{asymptotically Toeplitz operator with symbol $f$}, cf. \cite{MaHol}.
	\end{defn}
	\par 
	We fix $f \in L^{\infty}(X)$, $f \neq 0$, so that $\essinf_X f = 0$, and use the notations $K$, $Z$ and $NZ$ as in Introduction.
	Recall that the bounded function $\phi(h^L, NZ)$ was introduced in (\ref{defn_phi}). 
	\par 
	The following result is one of the main contributions of the current article. 
	\begin{thm}\label{thm_log2}
		The sequence $\frac{1}{k} \log (T_k(f))$, $k \in \nat^*$, forms an asymptotically Toeplitz operator of Schatten class with symbol $\phi(h^L, NZ)$.
		If, moreover, $(NZ, h^L)$ is Lebesgue pluriregular, then $\frac{1}{k} \log (T_k(f))$, $k \in \nat^*$, even forms an asymptotically Toeplitz operator with symbol $\phi(h^L, NZ)$.
	\end{thm}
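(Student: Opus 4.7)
The plan is to interpret $T_k(f)$ as a transfer operator and apply a semiclassical analysis. A direct computation with $T_k(f) = B_k \circ M_k(f)$ and the holomorphicity of test sections yields
\[
\langle T_k(f) s_1, s_2\rangle_{{\textrm{Hilb}}_k(h^L)} = \langle s_1, s_2\rangle_{{\textrm{Hilb}}_k(h^L, f \cdot c_1(L, h^L)^n)}
\]
for all $s_1, s_2 \in H^0(X, L^{\otimes k})$, so $T_k(f)$ is precisely the transfer operator from the reference norm ${\textrm{Hilb}}_k(h^L)$ to the weighted norm ${\textrm{Hilb}}_k(h^L, f \cdot c_1(L, h^L)^n)$. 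Since $NZ$ is Lebesgue non-negligible and hence non-pluripolar, $T_k(f)$ is positive definite and $\frac{1}{k}\log T_k(f)$ is well-defined; heuristically it is a quantized analogue of the geodesic velocity between these two $L^2$-norms on $H^0(X, L^{\otimes k})$.

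The second step is to asymptotically compare the weighted norm with one that depends only on the non-zero set $NZ$. Concretely, I would show that ${\textrm{Hilb}}_k(h^L, f \cdot c_1(L, h^L)^n)$ and ${\textrm{Hilb}}_k(h^L_{{\rm{Leb}}, NZ})$ are asymptotically equivalent in a Schatten-norm quantitative sense. This amounts to extending \cite{BerBoucNys} beyond the determining-measure, pluriregular-support setting; the inputs are the non-negligible psh envelope theory of Sections \ref{sect_determ} and \ref{sect_leb}, together with the Bernstein-Markov property for the Lebesgue measure from Section \ref{sect_bm}. The lower bound ``weighted norm dominates envelope norm'' is essentially built into the definition (\ref{defn_leb_env}) of the envelope as an infimum; the matching upper bound uses Bernstein-Markov to control the $L^\infty$-norm of sections on $NZ$ by their weighted $L^2$-norm with subexponential loss.

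Once the weighted norm has been replaced by an envelope norm, I would extend the result of \cite{FinSubmToepl} on the log of a transfer operator between $L^2$-norms associated with smooth volume forms to the case where one metric in the pair is only an envelope with bounded psh potential. The strategy is to approximate $h^L_{{\rm{Leb}}, NZ}$ by smooth positive metrics, apply \cite{FinSubmToepl} at the approximate level, and pass to the Schatten limit. The limit symbol is by construction the Mabuchi geodesic velocity between $h^L$ and $h^L_{{\rm{Leb}}, NZ}$, which by (\ref{defn_phi}) and the analysis of Section \ref{sect_mabuch} is exactly $\phi(h^L, NZ)$. The strengthening to asymptotically Toeplitz in operator norm under Lebesgue pluriregularity is then driven by the fact that, in this case, $\phi(h^L, NZ)$ is continuous, which upgrades $L^p$-control on the symbol and its smooth approximants to $L^\infty$-control.

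The principal obstacle I foresee is the second step: sections whose mass concentrates near the topological boundary of $NZ$ can have arbitrarily small weighted $L^2$-norms, so the Bernstein-Markov comparison with respect to the weighted measure is quantitatively delicate. Moreover, without a pluriregularity hypothesis on $(NZ, h^L)$ the envelope $h^L_{{\rm{Leb}}, NZ}$ is not continuous, so the classical pluripotential-theoretic approximation arguments have to be replaced by the more flexible tools of Sections \ref{sect_determ} and \ref{sect_leb}; the Schatten-versus-operator-norm dichotomy in the statement of the theorem is a direct reflection of this regularity distinction.
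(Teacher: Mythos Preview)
Your proposal is correct and follows essentially the same route as the paper: interpret $-\log T_k(f)$ as the transfer map between ${\textrm{Hilb}}_k(h^L)$ and ${\textrm{Hilb}}_k(h^L, f\cdot c_1(L,h^L)^n)$, replace the weighted norm by ${\textrm{Hilb}}_k(h^L_{{\rm{Leb}},NZ})$ via the Bernstein--Markov type estimate of Theorem~\ref{thm_asympt_bm} (packaged as Theorem~\ref{thm_equiv_2}), and then invoke the extension of \cite{FinSubmToepl} to bounded psh target metrics (Theorem~\ref{thm_trasnfer}) obtained by Demailly regularization. The only point worth sharpening is your description of the pluriregular upgrade: in the paper the operator-norm conclusion comes not from ``$L^p\to L^\infty$ control on the symbol'' but from the fact that continuity of $h^L_{{\rm{Leb}},NZ}$ yields the stronger $\sim_{+\infty}$-equivalence of norms (Corollary~\ref{thm_equiv_1}), which via Proposition~\ref{prop_equiv_tranf} propagates to $p=+\infty$, while continuity of $\phi(h^L,NZ)$ (Proposition~\ref{prop_cont_speed}) is what allows the second part of Theorem~\ref{thm_trasnfer} to apply.
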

	\begin{rem}
		If we had $\essinf_X f > 0$, as shown in \cite[Proposition 5.10]{FinSubmToepl}, a markedly different phenomenon would arise: the sequence $\log (T_k(f))$, $k \in \nat^*$, (note the absence of the factor $\frac{1}{k}$ in front of $\log (T_k(f))$) would form an asymptotically Toeplitz operator of Schatten class with symbol $\log(f)$, see also \cite{BoutGuillSpecToepl}, \cite{BordMeinSchli} and \cite{MaMarBTKah} for related results.
	\end{rem}
	\par 
	Theorem \ref{thm_log2} will be proved in Section \ref{sect_transf}. 
	Let us demonstrate how it leads to Theorems \ref{thm_regul} and \ref{thm_distr}. 
	For this, we will need the following result concerning Mabuchi geodesics. 
	\par 
	We fix a positive smooth metric $h^L_0$ on $L$, and a metric $h^L_1$ on $L$ with a bounded psh potential.
	Let $h^L_t$, $t \in [0, 1]$, be the Mabuchi geodesic between $h^L_0$ and $h^L_1$.
	We denote 
	\begin{equation}\label{eq_speed_mab_geod}
		\phi(h^L_0, h^L_1) := - (h^L_t)^{-1} \frac{d}{dt} h^L_t|_{t = 0}.
	\end{equation}
	The following result is established in \cite[Proposition 3.1]{FinSubmToepl}, \cite[Lemma 4.7]{FinSecRing} and Proposition \ref{prop_mab_contact_set}.
	\begin{sloppypar}
	\begin{prop}\label{prop_cont_speed}
		If $h^L_1$ is continuous, then $\phi(h^L_0, h^L_1)$ is continuous, and $\max_X |\phi(h^L_0, h^L_1)| = \max_X |\log (h^L_0 / h^L_1)|$.
		If, moreover, $h^L_0 \geq h^L_1$, then $\phi(h^L_0, h^L_1) \geq 0$.
	\end{prop}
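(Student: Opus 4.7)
The plan is to pass to potentials with respect to $h^L_0$, writing $h^L_t = e^{-u_t} h^L_0$ so that $u_0 \equiv 0$ and $u_1$ is the bounded $\omega$-psh potential of $h^L_1$, where $\omega := 2\pi c_1(L, h^L_0)$. A direct computation yields $\phi(h^L_0, h^L_1) = \dot u_0$, the right $t$-derivative at $t = 0$ of the Mabuchi geodesic $(u_t)_{t \in [0, 1]}$, which exists in $[-\infty, +\infty]$ by convexity of $t \mapsto u_t(x)$. Similarly, $\log(h^L_0 / h^L_1) = u_1$. The three assertions of the proposition thus translate to: (i) $\dot u_0$ is continuous when $u_1$ is; (ii) $\max_X |\dot u_0| = \max_X |u_1|$ in that case; and (iii) $\dot u_0 \geq 0$ whenever $u_1 \geq 0$.

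I would start with (iii), as it is the cleanest. The Mabuchi geodesic admits the upper-envelope characterization as the supremum of $S^1$-invariant $\pi^*\omega$-psh functions on $X \times A$ (with $A$ a suitable annulus) whose boundary traces are dominated by $u_0$ and $u_1$. When $u_1 \geq 0 = u_0$, the identically-zero function is admissible in this envelope, so $u_t \geq 0$ for all $t$ and hence $\dot u_0 \geq 0$. For the upper bound in (ii), the inequality $|\dot u_0| \leq \max|u_1|$ will follow from the classical Lipschitz property $\|u_t - u_s\|_{\infty} \leq |t-s|\cdot\|u_1 - u_0\|_{\infty}$ of Mabuchi geodesics in time, proved by the maximum principle with barriers of the form $u_s \pm (t-s)\|u_1 - u_0\|_{\infty}$.

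The continuity in (i) and the matching lower bound in (ii) are the delicate points, and for these I will invoke the cited results. Together, \cite[Proposition~3.1]{FinSubmToepl}, \cite[Lemma~4.7]{FinSecRing} and Proposition~\ref{prop_mab_contact_set} provide a characterization of $\dot u_0$ via a psh envelope involving $u_1$ supplemented by a contact-set condition. Continuity of $u_1$ transfers to continuity of this envelope, hence of $\dot u_0$, yielding (i); and the contact set is shown to be non-empty, which forces the sharp equality in (ii). The hardest point is precisely this attainment: Mabuchi geodesics lie strictly below the linear $t$-interpolation in general, so $\dot u_0$ is strictly less than $u_1$ on a substantial subset of $X$, and one must identify the locus where equality is forced --- this is exactly what the contact-set machinery delivers.
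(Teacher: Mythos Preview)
Your proposal is correct and follows essentially the same approach as the paper. The paper does not give a self-contained proof of this proposition; it simply states that the result ``is established in \cite[Proposition 3.1]{FinSubmToepl}, \cite[Lemma 4.7]{FinSecRing} and Proposition \ref{prop_mab_contact_set}.'' Your translation to potentials is correct, your argument for (iii) via the envelope description (\ref{eq_env_geod}) with the constant competitor $\hat u \equiv u_0$ is exactly the argument the paper gives in the proof of Proposition \ref{prop_mab_contact_set}, and you correctly defer (i) and the sharp equality in (ii) to the cited external references.

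One small remark on exposition: your heuristic for the lower bound in (ii) --- ``the contact set is shown to be non-empty, which forces the sharp equality'' --- is not quite the mechanism. The identity $\max_X |\dot u_0| = \max_X |u_1|$ is rather the statement that the geodesic has constant $d_\infty$-speed equal to $\|u_1 - u_0\|_\infty$; this is what \cite[Lemma 4.7]{FinSecRing} delivers. The contact-set statement in Proposition \ref{prop_mab_contact_set} is used elsewhere in the paper (for Proposition \ref{prop_leb_negl}), not for this equality. But since you are, like the paper, ultimately invoking the cited results rather than reproving them, this does not affect the correctness of your plan.
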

	\end{sloppypar}
	
	\begin{proof}[Proof of Theorem \ref{thm_regul}]
		By the Lebesgue pluriregularity of $(K, h^L)$ and Proposition \ref{prop_cont_speed}, $\phi(h^L, K)$ is continuous.
		Remark also that since $\lambda(Z) = 0$ by our assumption, we have $\phi(h^L, K) = \phi(h^L, NZ)$.
		Clearly, Theorem \ref{thm_regul} simply says that the biggest eigenvalue of $- \frac{1}{k} \log (T_k(f))$ converges, as $k \to \infty$, to $c(f)$ in the notations of Theorem \ref{thm_regul}.
		It is a standard fact, cf. \cite[Theorem 5.1]{BarrMa}, that the biggest eigenvalue of a Toeplitz operator with a continuous symbol converges to the maximal value of the symbol.
		From this and the second part of Theorem \ref{thm_log2}, we deduce that the biggest eigenvalue of $- \frac{1}{k} \log (T_k(f))$ converges to $\max_X \phi(h^L, NZ)$.
		By this and the second statement of Proposition \ref{prop_cont_speed}, we deduce Theorem \ref{thm_regul}.
	\end{proof}
	\par

	Let us now recall \cite[Proposition 5.12]{FinSubmToepl}, which generalizes the weak convergence from \cite{BoutGuillSpecToepl}.
	\begin{prop}\label{prop_weak_convtt}
		The spectral measures of a sequence of operators $T_k \in {\enmr{H^0(X, L^{\otimes k})}}$, $k \in \nat$, forming an asymptotically Toeplitz operator of Schatten class with symbol $f \in L^{\infty}(X)$, converge weakly, as $k \to \infty$, to $f_* (\frac{1}{\int_X c_1(L)^n} c_1(L, h^L)^n )$.
	\end{prop}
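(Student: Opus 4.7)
The plan is to reduce weak convergence of spectral measures to trace asymptotics against polynomials, transfer the computation from $T_k$ to a genuine Toeplitz operator with polynomial symbol, and then apply the standard diagonal Bergman kernel asymptotics. Since $\|T_k\| \leq C$ uniformly by the definition of asymptotically Toeplitz of Schatten class, the spectral measures of $T_k$ are all supported in the compact interval $[-C, C]$, so Stone-Weierstrass reduces weak convergence to checking it against the monomials $g(x) = x^m$, $m \in \nat$. Writing the left-hand side as $\frac{1}{\dim H^0(X, L^{\otimes k})} \tr{T_k^m}$, and noting that the right-hand side against $g$ equals $\int_X f^m \cdot c_1(L, h^L)^n / \int_X c_1(L)^n$ by the definition of the pushforward, it suffices to prove the latter limit.

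First I would replace $T_k^m$ by $T_k(f)^m$ in the trace. Using the telescoping identity $T_k^m - T_k(f)^m = \sum_{j=0}^{m-1} T_k^j (T_k - T_k(f)) T_k(f)^{m-1-j}$ together with the ideal inequality $\|AB\|_1 \leq \|A\| \cdot \|B\|_1$, the uniform operator-norm bound $\|T_k\| \leq C$, and $\|T_k(f)\| \leq \|f\|_{L^{\infty}}$, we bound $\|T_k^m - T_k(f)^m\|_1$ by a constant times $\|T_k - T_k(f)\|_1$, which tends to $0$ by Definition \ref{defn_toepl_schatten} applied with $p = 1$. Next I would replace $T_k(f)^m$ by $T_k(f^m)$: this is the asymptotic multiplicativity of Toeplitz operators with bounded symbol, $\|T_k(f)^m - T_k(f^m)\|_1 \to 0$, which follows by another telescoping argument from the two-term relation $\|T_k(f_1) T_k(f_2) - T_k(f_1 f_2)\|_1 \to 0$; for smooth symbols the latter is the classical Bordemann-Meinrenken-Schlichenmaier/Ma-Marinescu product formula, and the extension to $L^{\infty}$ symbols follows by density of continuous symbols combined with the Schatten-$1$ estimate $\|T_k(h)\|_1 \leq \int_X |h| B_k(x,x) \, dv(x) / \dim H^0(X, L^{\otimes k})$, which itself is a consequence of Jensen's inequality.

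Finally, the trace of a Toeplitz operator with bounded symbol has the explicit form $\tr{T_k(f^m)} = \int_X f^m(x) B_k(x,x) \, dv(x)$, and the classical diagonal Bergman kernel asymptotics of Catlin-Zelditch-Tian-Dai-Liu-Ma give that $B_k(x,x) \, dv(x) / \dim H^0(X, L^{\otimes k})$ converges weakly to $c_1(L, h^L)^n / \int_X c_1(L)^n$; since $f^m \in L^{\infty}(X)$, testing this weak convergence against $f^m$ yields the desired limit. The main obstacle is the Schatten-$1$ version of the Toeplitz product formula for merely bounded symbols, but this is handled by the density argument above once one has the $L^1$-type control of $\|T_k(h)\|_1$ by the Bergman measure of $|h|$.
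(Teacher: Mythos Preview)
The paper does not actually prove this proposition; it is recalled from \cite[Proposition 5.12]{FinSubmToepl}, so there is no in-paper argument to compare against. Your strategy---reduce to monomials via Stone--Weierstrass, pass from $T_k^m$ to $T_k(f)^m$ to $T_k(f^m)$ by telescoping and Schatten-$1$ estimates, then invoke the diagonal Bergman kernel expansion---is the standard route and is essentially correct.

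Two places deserve a bit more care. First, the Stone--Weierstrass reduction to monomials $x^m$ presupposes that the spectra lie on the real line, i.e.\ that the $T_k$ are self-adjoint; otherwise polynomials in $z$ alone are not dense in $\mathscr{C}(K)$ for $K \subset \comp$. In all applications in this paper (and in \cite{FinSubmToepl}) this is the case, but you should state it as a hypothesis. Second, in your final step you say that $B_k(x,x)\,dv / \dim H^0(X,L^{\otimes k})$ converges \emph{weakly} and then test against $f^m \in L^{\infty}(X)$; weak convergence of measures is only against continuous test functions, so this is not quite enough. What you actually need (and what the Tian--Catlin--Zelditch / Dai--Liu--Ma expansion gives) is that, with $dv = c_1(L,h^L)^n$, the density $B_k(x,x)/\dim H^0(X,L^{\otimes k})$ converges \emph{uniformly} on $X$ to the constant $1/\int_X c_1(L)^n$; this uniform bound is also what makes your $L^1$-approximation argument for the product formula go through. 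With these two clarifications the proof is complete.
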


	\begin{proof}[Proof of Theorem \ref{thm_distr}]
		It is a direct consequence of Theorem \ref{thm_log2} and Proposition \ref{prop_weak_convtt}.
	\end{proof}
	
	To establish Corollary \ref{cor_exist}, we need to establish some further results.
	
	\par 
	
	\begin{prop}\label{prop_nonzero_eig}
		Assume that a sequence of self-adjoint operators $T_k \in {\enmr{H^0(X, L^{\otimes k})}}$, $k \in \nat$, forms a Toeplitz operator of Schatten class with symbol $f$.
		Then the proportion of asymptotically non-null eigenvalues of $T_k$ equals to the relative volume of $X \setminus NZ$, i.e.
		\begin{equation}\label{eq_exist_2}
			\lim_{\epsilon \to +0} \lim_{k \to \infty} \frac{\# \{ \lambda \in {\rm{Spec}} (T_k) : \lambda \leq \epsilon \} }{ \# {\rm{Spec}} (T_k) }
			=
			\frac{\int_{X \setminus NZ} c_1(L, h^L)^n}{\int_X c_1(L, h^L)^n}.
		\end{equation}
	\end{prop}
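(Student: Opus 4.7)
The statement follows in a direct way from Proposition \ref{prop_weak_convtt}, provided one is careful at the critical value $\epsilon = 0$. Since each $T_k$ is self-adjoint, its eigenvalues are real, and under the standing convention $\essinf_X f = 0$ with $f \geq 0$ they are asymptotically non-negative. I would denote by
\[
    \mu_k := \frac{1}{\dim H^0(X, L^{\otimes k})} \sum_{\lambda \in \spec(T_k)} \delta_\lambda
\]
the normalized spectral measure of $T_k$, so that the proportion on the left-hand side of (\ref{eq_exist_2}) is exactly $\mu_k((-\infty, \epsilon])$. Setting $\mu := c_1(L, h^L)^n / \int_X c_1(L)^n$, Proposition \ref{prop_weak_convtt} yields $\mu_k \to f_* \mu$ weakly as $k \to \infty$.

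Next, I would put $F(t) := (f_* \mu)((-\infty, t])$ and observe that the monotone function $F$ has at most countably many discontinuities, so one can pick a sequence $\epsilon_j \downarrow 0$ of continuity points of $F$. The Portmanteau theorem, applied to the half-lines $(-\infty, \epsilon_j]$ (whose boundaries $\{\epsilon_j\}$ are $f_* \mu$-null by choice), yields $\lim_{k \to \infty} \mu_k((-\infty, \epsilon_j]) = F(\epsilon_j)$ for each $j$. Right-continuity of the distribution function then gives $\lim_{j \to \infty} F(\epsilon_j) = F(0) = \mu(\{f \leq 0\})$.

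Finally, I would identify $\{f \leq 0\}$ with $X \setminus NZ$ up to a null set. Using $f \geq 0$, the set $\{f \leq 0\}$ coincides with $\{f = 0\}$; by the definitions $K = \esssupp f$ and $Z = f^{-1}(0) \cap K$, the latter agrees with $(X \setminus K) \cup Z = X \setminus NZ$ modulo a Lebesgue-negligible set (on $X \setminus K$ the function $f$ vanishes almost everywhere, by definition of essential support). Since $\mu$ is absolutely continuous with respect to Lebesgue measure, this identification transfers to $\mu$-measure and produces the right-hand side of (\ref{eq_exist_2}).

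The argument is essentially bookkeeping in weak convergence, and the only mildly delicate step is that $t = 0$ is typically itself a discontinuity of $F$ --- precisely in the regime of interest, namely when $f_* \mu$ has an atom at $0$, i.e.\ when $\mu(X \setminus NZ) > 0$. For this reason one cannot apply weak convergence at $\epsilon = 0$ directly, but must instead let $\epsilon$ tend to $0$ through continuity points from above and invoke right-continuity of the distribution function; this is exactly why the outer limit $\epsilon \to +0$ appears in the statement.
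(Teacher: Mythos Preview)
Your proposal is correct and follows essentially the same route as the paper's proof: both invoke Proposition \ref{prop_weak_convtt}, choose a sequence $\epsilon_j \downarrow 0$ of continuity points of the limiting distribution function (using that a monotone function has at most countably many discontinuities), apply the Portmanteau theorem at those points, and then pass to the limit. The paper phrases the last step via the increasing union $\cup_l \{f > \epsilon_l\} = \{f > 0\}$, while you use right-continuity of the cumulative distribution function; these are the same computation. Your identification of $\{f \leq 0\}$ with $X \setminus NZ$ up to a Lebesgue-null set is exactly what is needed and is handled cleanly.
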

	\begin{sloppypar}
	\begin{proof}
		Consider a sequence $\epsilon_l > 0$, $l \in \nat$, which tends to $0$, as $l \to \infty$, and such that all $\epsilon_l$ are in the continuity set of the cumulative distribution function associated with the measure $f_* (\frac{1}{\int_X c_1(L)^n} c_1(L, h^L)^n )$ on $\real$ (note that since the cumulative distribution function is monotone, its set of discontinuity points is at most countable, which allows a choice of the sequence $\epsilon_l$).
		By the Portmanteau theorem and the choice of $\epsilon_l$, the weak convergence of spectral measures from Proposition \ref{prop_weak_convtt} implies that for any $l \in \nat$,  for the subsets $U_l := \{ x \in X : f(x) > \epsilon_l \}$, we have
		\begin{equation}\label{eq_exist_1}
			\lim_{k \to \infty} \frac{\# \{ \lambda \in {\rm{Spec}} (T_k) : \lambda \leq \epsilon_l \} }{ \# {\rm{Spec}} (T_k) }
			=
			\frac{\int_{X \setminus U_l} c_1(L, h^L)^n}{\int_X c_1(L, h^L)^n}.
		\end{equation}
		By taking a limit $l \to \infty$ in (\ref{eq_exist_1}), using the fact that $\cup_{l \in \nat} U_l = \{ x \in X : f(x) > 0 \}$ and applying Proposition \ref{prop_leb_negl}, we deduce (\ref{eq_exist}), which finishes the proof.
	\end{proof}
	\end{sloppypar}
	
	The following results from complex pluripotential theory are established in Sections \ref{sect_leb} and \ref{sect_mabuch}.
	\begin{prop}\label{prop_leb_env_all}
		For any continuous metric $h^L$ with a psh potential, we have $h^L_{{\rm{Leb}}, X} = h^L$.
		In particular, $(X, h^L)$ is Lebesgue pluriregular. 
		Moreover, for any measurable subset $A \subset X$, such that $(A, h^L)$ is Lebesgue pluriregular, we have $h^L_{{\rm{Leb}}, A} = h^L$ over $\overline{A}$.
	\end{prop}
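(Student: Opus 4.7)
The strategy is to translate the statement into psh potentials. Fix a positive smooth reference metric $h^L_{\mathrm{ref}}$ on $L$ and write $h^L = e^{-u} h^L_{\mathrm{ref}}$ for a continuous potential $u$ that is psh relative to the reference. Competitors $h^L_0$ for the envelope (\ref{defn_leb_env}) correspond bijectively to usc psh potentials $u_0$ satisfying $u_0 \leq u$ almost everywhere on $A$, and the potential $\tilde{u}_A$ of $h^L_{{\rm{Leb}}, A}$ is the pointwise supremum of these $u_0$, which is already psh (no upper semi-continuous regularization needed) by Proposition \ref{prop_leb_env_id}. For part (1), the inequality $\tilde{u}_X \geq u$ is immediate since $u$ itself is a competitor. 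For the reverse direction, fix any competitor $u_0$ and any $x_0 \in X$; working in a local holomorphic chart around $x_0$ and writing $B_r$ for the Euclidean ball of radius $r$, the sub-mean value property of psh functions gives
\begin{equation*}
    u_0(x_0) \leq \frac{1}{|B_r|}\int_{B_r} u_0 \, dV \leq \frac{1}{|B_r|}\int_{B_r} u \, dV,
\end{equation*}
where the second inequality uses $u_0 \leq u$ almost everywhere and the right-hand side tends to $u(x_0)$ as $r \to 0$ by continuity of $u$. Hence $u_0 \leq u$ everywhere, giving $\tilde{u}_X = u$ and $h^L_{{\rm{Leb}}, X} = h^L$; part (2) follows immediately since $h^L$ is continuous.

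For part (3), the inequality $\tilde{u}_A \geq u$ again holds since $u$ is a competitor. For the reverse direction on $A$, I would invoke Choquet's lemma from pluripotential theory to extract a countable subfamily $\{u_0^{(m)}\}_{m \in \nat}$ of competitors with $\sup_m u_0^{(m)} = \tilde{u}_A$ pointwise (no regularization being needed by Proposition \ref{prop_leb_env_id}). Since each $u_0^{(m)}$ satisfies $u_0^{(m)} \leq u$ off some null set $N_m \subset A$, the union $N := \bigcup_m N_m$ is still Lebesgue null and $\tilde{u}_A \leq u$ on $A \setminus N$, giving $\tilde{u}_A = u$ almost everywhere on $A$.

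To promote this a.e.\ equality to pointwise equality on $\overline{A}$, I would use that both $\tilde{u}_A$ and $u$ are continuous (the former by the Lebesgue pluriregularity assumption on $(A, h^L)$): the coincidence set $\{\tilde{u}_A = u\}$ is therefore closed and contains $A \setminus N$, hence contains $\overline{A \setminus N}$. The main obstacle is precisely showing that $\overline{A \setminus N} = \overline{A}$, which does not follow formally from $N$ being Lebesgue null. I expect this to be resolved either by invoking the invariance of $h^L_{{\rm{Leb}}, A}$ under Lebesgue-null modifications of $A$, together with the Lebesgue density theorem (allowing one to replace $A$ by its set of Lebesgue density points, which is dense in the topological closure under the ambient assumptions), or by applying the sub-mean value inequality directly to the psh function $\tilde{u}_A$: for $x_0 \in \overline{A}$, split $\frac{1}{|B_r|}\int_{B_r} \tilde{u}_A \, dV$ over $B_r \cap A$ and $B_r \setminus A$, using $\tilde{u}_A = u$ a.e.\ on $A$ together with the boundedness of $\tilde{u}_A$ on the complement, and let $r \to 0$ to bound $\tilde{u}_A(x_0)$ from above by $u(x_0)$.
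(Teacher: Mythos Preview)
Your argument for the first two sentences is correct and is essentially what the paper does: the sub-mean value inequality you invoke is precisely the content of Proposition~\ref{prop_determ} specialized to $A=X$, and the paper simply cites its corollary Proposition~\ref{prop_leb_env_id} together with the trivial identity $h^L_X=h^L$.

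For the third sentence, your Choquet--lemma detour is both unnecessary and slightly inaccurate. Choquet's lemma only guarantees $(\sup_m u_0^{(m)})^*=\tilde u_A$, not pointwise equality; the supremum of the countable subfamily could lie strictly below $\tilde u_A$ on a pluripolar set, so ``no regularization being needed by Proposition~\ref{prop_leb_env_id}'' does not justify the claimed pointwise identity. More to the point, you do not need any extraction at all: by Proposition~\ref{prop_leb_env_id} one has $\tilde u_A=\phi_{d(A)}$, and the psh envelope over $d(A)$ satisfies $\phi_{d(A)}\le u$ on $d(A)$ by definition. This is exactly what the paper's one-line proof invokes via Remark~\ref{rem_leb_env_id}. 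Combining with $\tilde u_A\ge u$ and the continuity hypothesis gives $\tilde u_A=u$ on $\overline{d(A)}$.

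The obstacle you flag is genuine, and your resolution (b) does not work: for $x_0\in\overline A$ the integral over $B_r\setminus A$ need not vanish as $r\to 0$ unless $x_0\in d(A)$, so the sub-mean value argument only recovers the inequality on $d(A)$ again. Your resolution (a) is the right one and is what the paper uses, but note that it yields equality on $\overline{d(A)}$, which is the \emph{support of the Lebesgue measure on $A$} rather than $\overline A$ in general (think of $A=[0,1]\cup\{2\}$ in a local chart). These two closures agree precisely when every point of $\overline A$ has a neighborhood meeting $A$ in positive measure, which holds in all of the paper's applications (there $A=K=\esssupp f$, so every ball around a point of $K$ meets $A$ in positive measure by definition of essential support, hence contains density points of $A$).
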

	\begin{rem}
		See also \cite[Proposition 2.11]{GuedjLuZeriahEnv} for a related result.
	\end{rem}
	\begin{prop}\label{prop_leb_negl}
		The symmetric difference between $X \setminus K$ and $\{ x \in X : \phi(h^L, K)(x) < 0 \}$ is Lebesgue-negligible. 
	\end{prop}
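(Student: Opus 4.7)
My plan is to reduce the statement to two structural properties of $\phi(h^L, K)$ that are to be established within Section \ref{sect_mabuch}: first, strict negativity of $\phi(h^L, K)$ on $X \setminus K$; second, vanishing of $\phi(h^L, K)$ at every Lebesgue dense point of $K$, in the sense of (\ref{eq_leb_dens}). These are the direct analogues of the properties of $\phi(h^L, NZ)$ announced in the Introduction, now applied to the closed set $K$. Once these two inputs are granted, the rest of the proof is a short measure-theoretic manipulation.

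Granted the first property, $X \setminus K \subseteq \{\phi(h^L, K) < 0\}$, so the symmetric difference in the statement is contained in $\{x \in K : \phi(h^L, K)(x) < 0\}$. I would then apply the Lebesgue density theorem to the measurable set $K$: almost every point of $K$ is a Lebesgue dense point of $K$. Combined with the second property, this yields $\phi(h^L, K) = 0$ almost everywhere on $K$, so $\{x \in K : \phi(h^L, K)(x) < 0\}$ is Lebesgue-negligible, which is the desired conclusion.

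The substantive content therefore lies in the two pluripotential-theoretic inputs. Strict negativity off $K$ should follow from the fact that on the open set $X \setminus K$ the Lebesgue envelope $h^L_{\mathrm{Leb}, K}$ is unconstrained from below by $h^L$, and a maximum-principle argument for the psh potential of $h^L_{\mathrm{Leb}, K}/h^L$ forces this ratio to be strictly less than $1$ there; via the Mabuchi-geodesic equation for $h^L_t$, this translates into a strictly negative initial velocity. The main obstacle is the vanishing at Lebesgue dense points: at such a point $x_0 \in K$, one has to show that the defining almost-everywhere constraint $h^L_0 \geq h^L$ on $K$ forces sufficiently tight contact between $h^L_{\mathrm{Leb}, K}$ and $h^L$ at $x_0$ that the initial velocity of the Mabuchi geodesic vanishes there. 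I expect this step to combine a local sub-mean-value estimate for the psh potential of $h^L_{\mathrm{Leb}, K}/h^L$ with upper semicontinuity of psh functions, converting the density-one property of $x_0$ in $K$ into pointwise equality of the envelope with $h^L$ at $x_0$, and then to feed this into the convexity of the Mabuchi path to obtain the desired vanishing.
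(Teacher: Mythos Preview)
Your proposal is correct and matches the paper's approach: the paper deduces precisely your two inputs from Propositions \ref{prop_mab_contact_set}, \ref{prop_contact_set_closed} and Remark \ref{rem_leb_env_id} (vanishing at Lebesgue dense points via the identification $h^L_{\mathrm{Leb},K}=h^L_{d(K)}$ from Proposition \ref{prop_leb_env_id} together with the convexity bound $\dot u_0 \leq u_1 - u_0$; strict negativity on $X\setminus K$ via a bump-function construction rather than a maximum principle), and then concludes by the Lebesgue density theorem exactly as you outline. Your sketches for the two inputs are also on the right track, in particular the sub-mean-value argument you anticipate is exactly how Proposition \ref{prop_determ} establishes the contact at dense points.
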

	
	\begin{sloppypar}
	\begin{proof}[Proof of Corollary \ref{cor_exist}]
		Immediately from Theorem \ref{thm_regul} and Proposition \ref{prop_leb_env_all}, we see that if $\esssupp f = X$, then the smallest eigenvalue of $T_k(f)$ decays subexponentially.
		\par 
		To establish Corollary \ref{cor_exist} in full, now it suffices to prove (\ref{eq_exist}).
		Remark that (\ref{eq_exist}) can be reformulated as follows
		\begin{equation}\label{eq_exist2}
			\lim_{\epsilon \to +0} \lim_{k \to \infty} \frac{\# \{ \lambda \in {\rm{Spec}} (- \frac{1}{k} \log (T_k(f))) : \lambda \leq \epsilon \} }{ \# {\rm{Spec}} (- \frac{1}{k} \log (T_k(f))) }
			=
			\frac{\int_{X \setminus K} c_1(L, h^L)^n}{\int_X c_1(L, h^L)^n}.
		\end{equation}
		But it follows directly from Theorem \ref{thm_log2} and Propositions \ref{prop_nonzero_eig}, \ref{prop_leb_negl}, as by our standing assumption $\lambda(Z) = 0$, we obviously have $\phi(h^L, NZ) = \phi(h^L, K)$.
	\end{proof}
	\end{sloppypar}
	In Section \ref{sect_mabuch}, we shall also establish the following result. 
	\begin{prop}\label{prop_local_geod}
		In the notations and assumptions from Corollary \ref{cor_local}, we have $\phi(h^L, NZ) = p^* \phi(h^{L'}, NZ')$, where $NZ'$ is the non-zero set of $f'$.
	\end{prop}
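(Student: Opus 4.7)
The plan is to show that the Mabuchi geodesic from $h^L$ to $h^L_{{\rm{Leb}}, NZ}$, when restricted to $U$, is determined by local data on $U$ together with appropriate boundary information, and that this restriction can be matched via $p$ with the analogous object on $U'$.

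First, I would establish that the Lebesgue envelopes agree on $U$: namely, $h^L_{{\rm{Leb}}, NZ}|_U = p^*(h^{L'}_{{\rm{Leb}}, NZ'}|_{U'})$. The key observation is that $X \setminus U \subset i(K) \subset \overline{NZ}$, so the envelope constraint ($h^L_{{\rm{Leb}}, NZ} \geq h^L$ almost everywhere on $NZ$) forces the envelope to essentially equal $h^L$ on $X \setminus U$, which pins down the boundary values on $\partial U$. Combined with the characterization of the Lebesgue envelope developed in Section \ref{sect_leb} (in particular the plurisubharmonicity of its potential established in Proposition \ref{prop_leb_env_id}) and the fact that $p$ identifies the obstacle data $(h^L, NZ \cap U)$ with $(h^{L'}, NZ' \cap U')$, the envelopes agree via $p$ on $U$.

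Next, I would use that the Mabuchi geodesic is characterized as the bounded plurisubharmonic solution to the homogeneous complex Monge-Amp\`ere equation on $X \times \mathbb{A}$, where $\mathbb{A} \subset \mathbb{C}$ is the strip corresponding to complexified time, with prescribed boundary values on $X \times \partial \mathbb{A}$. Restricting to $U \times \mathbb{A}$ and invoking the matching of endpoints from the first step, together with the fact that on $\partial U$ the geodesic is stationary (as $\partial U$ sits in the contact set where $h^L_{{\rm{Leb}}, NZ} = h^L$ in a suitable averaged sense), uniqueness for the Dirichlet problem gives that the two Mabuchi geodesics agree on $U \times \mathbb{A}$ via $p$. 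Differentiating in $t$ at $t = 0$ yields the desired identity $\phi(h^L, NZ)|_U = p^* \phi(h^{L'}, NZ')|_{U'}$.

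The main obstacle lies in the first step: envelopes are a priori global objects, so reducing the global envelope problem to a local one on $U$ is the delicate point. The subtlety is most pronounced since no assumption $\lambda(Z) = 0$ is made here, so the naive identity $h^L_{{\rm{Leb}}, NZ} = h^L$ on $X \setminus U$ can fail on the potentially Lebesgue-positive set $Z \cap i(K)$. Overcoming this would rely on the detailed analysis of Lebesgue envelopes in Section \ref{sect_leb}, exploiting the fact that the psh potential of $h^L_{{\rm{Leb}}, NZ}$ is automatically plurisubharmonic (without requiring an upper semicontinuous regularization) so that any discrepancy on a Lebesgue-null set does not propagate, and that the Mabuchi geodesic averages out such discrepancies through its Monge-Amp\`ere structure.
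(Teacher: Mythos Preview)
Your outline is essentially the paper's proof, but the obstacle you flag is illusory and recognizing this collapses the argument to a few lines.

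The identity $h^L_{{\rm{Leb}},NZ}=h^L$ holds \emph{everywhere} on $K$ (hence on $X\setminus U\subset i(K)$), not merely almost everywhere, and no hypothesis on $\lambda(Z)$ is needed. Indeed, by Proposition~\ref{prop_leb_env_id} the potential of $h^L_{{\rm{Leb}},NZ}$ is $\omega$-psh, hence upper semicontinuous, and it coincides with the smooth potential of $h^L$ on $d(NZ)$; since every open set meeting $K=\esssupp f$ carries positive Lebesgue mass of $NZ$ (hence of $d(NZ)$), the set $d(NZ)$ is dense in $K$, and upper semicontinuity forces equality on all of $K$. So the ``delicate point'' you isolate does not arise.

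Once $u_0=u_1$ on $K$, the sandwich $u_0\le u_t\le (1-t)u_0+tu_1$ from the first part of Proposition~\ref{prop_mab_contact_set} and (\ref{eq_conv_geod}) gives $u_t=u_0$ on $X\setminus U$ for every $t$; this is exactly what the paper records. The matching on $U$ via $p$ then follows directly from the envelope description (\ref{eq_env_geod}) of the geodesic: since both geodesics are pinned to $u_0$ (resp.\ $u_0'$) on the complement of $U$ (resp.\ $U'$), the competitors in (\ref{eq_env_geod}) on $X$ and on $X'$ correspond via $p$, and one reads off $p^*h^{L'}_t=h^L_t$. Your ``Dirichlet uniqueness on $U\times\mathbb{A}$'' is the same mechanism, but invoking the global envelope (\ref{eq_env_geod}) avoids having to set up and justify a boundary value problem on a domain with corners.
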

	
	\begin{proof}[Proof of Corollary \ref{cor_local}]
		It follows immediately from Theorems \ref{thm_regul}, \ref{thm_distr} and Proposition \ref{prop_local_geod}.
	\end{proof}
	\begin{proof}[Proof of Corollary \ref{cor_small_exp}]
		By the Demailly regularization theorem and the fact that $h^L_{{\rm{Leb}}, K}$ has a psh potential, cf. Proposition \ref{prop_leb_env_id}, there is a sequence of positive metrics $h^L_i$, $i \in \nat^*$, on $L$, increasing pointwise towards $h^L_{{\rm{Leb}}, K}$, see \cite{DemRegul}, \cite{GuedZeriGeomAnal}.
		Since $h^L_{{\rm{Leb}}, K}$ is continuous by the Lebesgue pluriregularity assumption, the convergence is uniform by Dini's theorem.
		Hence, we can find $i \in \nat$, so that for $h^L_0 := h^L_i$, the following bound is satisfied
		\begin{equation}\label{eq_bnd_env_reg}
			h^L_{{\rm{Leb}}, K} \geq h^L_0 \geq \exp(-\epsilon) h^L_{{\rm{Leb}}, K}.
		\end{equation}
		We claim that $h^L_0$ will satisfy the assumptions of Corollary \ref{cor_local}.
		\par 
		To see this, we first note that since $h^L$ was initially fixed to be positive and $(K, h^L)$ is Lebesgue pluriregular, by Proposition \ref{prop_leb_env_all}, over $K$ we have $h^L_{{\rm{Leb}}, K} = h^L$, which implies the first condition on $h^L_0$ from Corollary \ref{cor_small_exp} by (\ref{eq_bnd_env_reg}).
		Second, since the formation of psh envelopes clearly preserves the order, and the Lebesgue envelope associated with $h^L_{{\rm{Leb}}, K}$ and $K$ obviously equals $h^L_{{\rm{Leb}}, K}$, for the Lebesgue envelope $h^L_{0, {\rm{Leb}}, K}$ associated with $h^L_0$ and $K$, by (\ref{eq_bnd_env_reg}), we have $h^L_{{\rm{Leb}}, K} \geq h^L_{0, {\rm{Leb}}, K} \geq \exp(-\epsilon) h^L_{{\rm{Leb}}, K}$.
		We deduce that $c_0(f) := \max_{x \in X} \log(h^L_0(x) / h^L_{0, {\rm{Leb}}, K}(x))$ satisfies $c_0(f) \leq \epsilon$.
		An application of Theorem \ref{thm_regul} finishes the proof.
	\end{proof}
	
	\section{Determining measures and non-negligible psh envelopes}\label{sect_determ}
	This section studies the non-negligible psh envelopes associated with a non-pluripolar Borel measure and compares them with classical psh envelopes. 
	As an application, we interpret the determining property of a measure through the identity between these envelopes.
	\par 
	Although our primary motivation concerns the Lebesgue measures on measurable subsets of $X$, which forms a very particular class of non-pluripolar Borel measures, we have chosen to develop the theory in a more general setting.
	The proofs of the results do not simplify dramatically in the Lebesgue case, but by presenting the general picture first, we hope to make the distinctive features of the Lebesgue case more apparent.
	This more general setting will also play a crucial role in Section \ref{sect_gen_toepl}, where we study generalized Toeplitz operators and Toeplitz matrices.
	\par 
	We denote by $\mu$ a non-pluripolar Borel probability measure on $X$ with support $K$.
	For a continuous metric $h^L$ on $L$, following \cite{GuedjLuZeriahEnv}, we define the \textit{non-negligible psh envelope} $h^L_{\mu}$ as
	\begin{equation}\label{defn_nonnegl_env}
		h^L_{\mu}
		=
		\inf \Big\{
			h^L_0 \text{ with psh potential }: h^L_0 \geq h^L \text{$\mu$-almost everywhere on } K
		\Big\}.
	\end{equation}
	Remark that this definition depends only on the absolutely continuous class of the measure $\mu$.
	\par 
	We fix a non-pluripolar subset $E \subset X$. 
	Following Siciak \cite{SiciakExtremal}, Guedj-Zeriahi \cite{GuedZeriGeomAnal}, we define the \textit{psh envelope} $h^L_{E}$ associated with $E$ as
	\begin{equation}\label{defn_env}
		h^L_E
		=
		\inf \Big\{
			h^L_0 \text{ with psh potential }: h^L_0 \geq h^L \text{ over } E
		\Big\}.
	\end{equation}
	The non-pluripolarity of $E$ assures that the metric $h^L_E$ has a bounded potential, cf. \cite[Theorem 9.17]{GuedZeriGeomAnal}.
	A pair $(E, h^L)$ is called \textit{pluriregular} if $h^L_E$ is continuous, cf. \cite[p. 186]{KlimekBook}. 
	\par 
	Note that while the inequality $h^L_E \geq h^L$ on $E$ is immediate, it is not obvious a priori that $h^L_{\mu}$ admits a potential that is bounded somewhere.
	The following result due to Guedj-Lu-Zeriahi \cite[Proposition 2.10]{GuedjLuZeriahEnv}, however, establishes this.
	\begin{prop}\label{prop_env_gen_fact}
		The metric $h^L_{\mu}$ has a bounded psh potential and there is $E \subset K$, $\mu(E) = 0$, such that $h^L_{\mu} = h^L_{K \setminus E}$. 
	\end{prop}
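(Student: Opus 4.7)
The plan is to translate the problem to the potential picture via (\ref{eq_pot_metr_corr}). Fix a reference positive metric $h^L_{\rm ref}$ on $L$ with Kähler form $\omega := 2\pi c_1(L, h^L_{\rm ref})$, write $h^L = e^{-u} h^L_{\rm ref}$ with $u$ continuous, and consider
$$\psi_\mu := \sup\bigl\{\psi \in {\rm PSH}(X, \omega) : \psi \leq u \ \mu\text{-a.e.\ on } K\bigr\}, \quad \psi_E := \sup\bigl\{\psi \in {\rm PSH}(X, \omega) : \psi \leq u \ \text{on } E\bigr\}.$$
After u.s.c.\ regularization, $\psi_\mu^*$ and $\psi_E^*$ are the psh potentials of $h^L_\mu$ and $h^L_E$ respectively. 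The proof will split into two parts: (i) a uniform upper bound on all admissible $\psi$, yielding boundedness of $\psi_\mu^*$; and (ii) extraction of a countable subfamily via Choquet's lemma, whose individual exceptional sets coalesce into a single $\mu$-null set $E$.

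For (i), the key input is a standard consequence of the non-pluripolarity of $\mu$: since $\mu$ does not charge pluripolar sets and the family $\{\psi \in {\rm PSH}(X, \omega) : \sup_X \psi = 0\}$ is compact in $L^1(X)$, there exists $C = C(\mu) > 0$ such that $\sup_X \psi \leq \esssup_\mu \psi + C$ for every $\psi \in {\rm PSH}(X, \omega)$ (a Skoda-type estimate, alternatively derivable by applying the sub-mean inequality on small balls carrying positive $\mu$-mass, together with Hartogs' lemma). Applied to any competitor $\psi$ in the definition of $\psi_\mu$, this yields $\sup_X \psi \leq \sup_X u + C$, so $\psi_\mu$ is bounded from above. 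By Choquet--Bremermann, $\psi_\mu^*$ is then a bounded $\omega$-psh function, establishing the first assertion.

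For (ii), the inclusion $\psi_{K \setminus E} \leq \psi_\mu$ for any Borel $E \subset K$ with $\mu(E) = 0$ is immediate from the definitions, since any pointwise competitor on $K \setminus E$ automatically satisfies the $\mu$-a.e.\ condition on $K$. For the reverse, Choquet's lemma furnishes an increasing countable family $(\psi_n) \subset {\rm PSH}(X,\omega)$ of admissible competitors with $(\sup_n \psi_n)^* = \psi_\mu^*$. For each $n$, the Borel set $E_n := \{x \in K : \psi_n(x) > u(x)\}$ satisfies $\mu(E_n) = 0$, so $E := \bigcup_n E_n$ is still $\mu$-null, and each $\psi_n$ is pointwise dominated by $u$ on $K \setminus E$. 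Thus $\psi_n \leq \psi_{K \setminus E}$ for every $n$, and passing to the supremum and then to the u.s.c.\ regularization yields $\psi_\mu^* \leq \psi_{K \setminus E}^*$, which combined with the previous inclusion completes the argument.

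The main technical subtlety is handling the u.s.c.\ regularization: the set where $\psi_\mu$ differs from $\psi_\mu^*$ is pluripolar, but the non-pluripolarity of $\mu$ ensures this modification does not disturb the $\mu$-a.e.\ constraint, so $\psi_\mu^*$ itself remains a competitor and the envelope metric $h^L_\mu$ is consistently identified with $e^{-\psi_\mu^*} h^L_{\rm ref}$ throughout.
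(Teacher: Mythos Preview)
Your overall strategy---Choquet's lemma plus pluripolar negligibility---matches the paper's, and your uniform boundedness argument via the Skoda-type estimate $\sup_X \psi \leq \esssup_\mu \psi + C$ is a legitimate alternative to the paper's contradiction-through-countable-union. That estimate is indeed true for non-pluripolar $\mu$ (if it failed, summing $\sum 2^{-j}\psi_j$ for a bad sequence with $\esssup_\mu \psi_j \leq -2^j$ and $\sup_X \psi_j = 0$ produces a $\omega$-psh function that is $-\infty$ on a set of full $\mu$-measure), so this part is fine.

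However, there is a gap at the end. You correctly argue that $\psi_\mu = \psi_\mu^*$, but you never show $\psi_{K\setminus E} = \psi_{K\setminus E}^*$, and in fact this need not hold for the $E$ you constructed. Your chain of inequalities yields only
\[
\psi_{K\setminus E} \leq \psi_\mu = \psi_\mu^* \leq \psi_{K\setminus E}^*,
\]
hence $\psi_\mu = \psi_{K\setminus E}^*$, not $\psi_\mu = \psi_{K\setminus E}$. Since $h^L_{K\setminus E}$ is defined in (\ref{defn_env}) as the raw infimum (so its potential is $\psi_{K\setminus E}$, \emph{not} $\psi_{K\setminus E}^*$; your opening sentence misidentifies this), you have not yet proved $h^L_\mu = h^L_{K\setminus E}$. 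The fix is exactly what the paper does: let $G := \{\psi_{K\setminus E}^* > \psi_{K\setminus E}\}$, which is pluripolar hence $\mu$-null, and replace $E$ by $E' := E \cup G$. Then $\psi_{K\setminus E}^* \leq u$ on $K\setminus E'$, so $\psi_{K\setminus E}^*$ is a competitor for $\psi_{K\setminus E'}$, giving $\psi_\mu = \psi_{K\setminus E}^* \leq \psi_{K\setminus E'} \leq \psi_\mu$ and hence the desired equality with $E'$ in place of $E$.
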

	\begin{rem}\label{rem_leb_env_id}
		a) The set $E$ is not uniquely determined.
		 The reader can verify that if $E$ satisfies the assumptions of Proposition \ref{prop_env_gen_fact}, then so does any enlarged set of the form $E \cup F$ where $F \subset K$ is an arbitrary subset with $\mu(F) = 0$.
		 Note, however, that directly from Zorn's lemma and \cite[Proposition 9.19.3]{GuedjZeriahBook}, there is \textit{a minimal} $E \subset K$, $\mu(E) = 0$, such that $h^L_{\mu} = h^L_{K \setminus E *}$.  
		 \par 
		 b) As over $K \setminus E$, we trivially have $h^L_{K \setminus E} \geq h^L$, we infer by Proposition \ref{prop_env_gen_fact}  that $h^L_{\mu} \geq h^L$ over $K \setminus E$.
		 In particular, the infimum from (\ref{defn_nonnegl_env}) has the minimal element.
		If $h^L$ has moreover a psh potential, by the trivial bound $h^L \geq h^L_{\mu}$, then we even have $h^L_{\mu} = h^L$ over $K \setminus E$.
	\end{rem}
	We fix a positive metric $h^L_0$ on $L$, denote $\omega := 2 \pi c_1(L, h^L_0)$, and let $\phi$ (resp. $\phi_E$, $\phi_{\mu}$) be the potential of $h^L$ (resp. $h^L_E$, $h^L_{\mu}$), cf. (\ref{eq_pot_metr_corr}).
	Directly from the definitions, we can rewrite
	\begin{equation}\label{eq_phi_mu_form1}
	\begin{aligned}
		&
		\phi_{\mu}
		=
		\sup \Big\{
			\phi_0 \in {\rm{PSH}}(X, \omega) : \phi_0 \leq \phi \text{ $\mu$-almost everywhere on } K
		\Big\},
		\\
		&
		\phi_E
		=
		\sup \Big\{
			\phi_0 \in {\rm{PSH}}(X, \omega) : \phi_0 \leq \phi \text{ on } E
		\Big\}.
	\end{aligned}
	\end{equation}
	Immediately from this, we then have
	\begin{equation}\label{eq_phi_mu_form}
		\phi_{\mu}
		=
		\sup \Big\{
			\phi_{K \setminus F} : \mu(F) = 0
		\Big\}.
	\end{equation}
	The reader will then check that the second part of Proposition \ref{prop_env_gen_fact} can be reformulated as follows:
	for any $\psi \in {\rm{PSH}}(X, \omega)$, we have
	\begin{equation}\label{eq_esssup_phi_mu_ident}
		\esssup_{\mu} (\psi - \phi) 
		=
		\sup_{X}  (\psi - \phi_{\mu}).
	\end{equation}
	Below we provide an independent proof of Proposition \ref{prop_env_gen_fact} for the convenience of the reader.
	\begin{proof}[Proof of Proposition \ref{prop_env_gen_fact}]
		First, we establish that $\phi_{\mu}$ is uniformly bounded. 
		Suppose, for contradiction, that there exists a sequence of sets $E_i$, $i \in \nat$, with $\mu(E_i) = 0$, such that $\sup_X \phi_{K \setminus E_i} \to +\infty$.
		Remark that for $E := \cup_{i \in \nat} E_i$, we have $\phi_{K \setminus E_i} \leq \phi_{K \setminus E}$ for any $i \in \nat$, and as a consequence, $\sup_X \phi_{K \setminus E} = +\infty$.
		However, since $\mu(E) = 0$, we have $\mu(K \setminus E) = 1$, and so $K \setminus E$ is non-pluripolar by our assumption on $\mu$.
		But then from \cite[Theorem 9.17]{GuedZeriGeomAnal}, $\sup_X \phi_{K \setminus E} < +\infty$, which gives a contradiction. 
		We conclude that $\phi_{\mu}$ is uniformly bounded.
		\par 
		We will now show that $\phi_{\mu} \in {\rm{PSH}}(X, \omega)$.
		For this, we first remark that for any non-pluripolar $F \subset X$, we have $\phi_{F}^* = \phi_{F}$ away from a pluripolar (or empty) subset by \cite[Theorem 4.42]{GuedjZeriahBook}.
		We denote by $G \subset X$ this subset, and then we see that $\phi_{F}^*$ is one of the contenders for the supremum in the definition of $\phi_{F \setminus G}$, which immediately implies that $\phi_{F}^* \leq  \phi_{F \setminus G}$. 
		Since $\mu$ is non-pluripolar, we conclude by (\ref{eq_phi_mu_form}) and the above
		\begin{equation}\label{eq_phi_mu}
			\phi_{\mu}
			=
			\sup \Big\{
				\phi_{K \setminus E}^* : \mu(E) = 0
			\Big\}.
		\end{equation}
		\par 
		By the uniform boundness of $\phi_{\mu}$, (\ref{eq_phi_mu}), the fact that $\phi_{K \setminus E}^* \in {\rm{PSH}}(X, \omega)$, cf. \cite[Theorem 9.17]{GuedjZeriahBook}, and \cite[Proposition I.4.24]{DemCompl}, we deduce that $\phi_{\mu}^* \in {\rm{PSH}}(X, \omega)$.
		Let us establish that $\phi_{\mu}^* = \phi_{\mu}$, which would then imply that $\phi_{\mu} \in {\rm{PSH}}(X, \omega)$.
		\par 
		Indeed, $\phi_{\mu}^* = \phi_{\mu}$ away from a pluripolar (or empty) subset $G \subset X$ by \cite[Theorem 4.42]{GuedjZeriahBook}.
		But since $\mu(G) = 0$, we conclude that $\phi_{\mu}^* \leq \phi$ $\mu$-almost everywhere, which means that $\phi_{\mu}^*$ is one of the contenders in the supremum from the definition of $\phi_{\mu}$, which shows that $\phi_{\mu}^* \leq \phi_{\mu}$.
		Clearly, the last inequality implies $\phi_{\mu}^* = \phi_{\mu}$, as we trivially have $\phi_{\mu}^* \geq \phi_{\mu}$.
		\par 
		Let us now establish the existence of $E \subset X$, $\mu(E) = 0$, so that $\phi_{\mu} = \phi_{K \setminus E}$.
		By (\ref{eq_phi_mu_form}), it suffices to find $E \subset X$, $\mu(E) = 0$, so that $\phi_{\mu} \leq \phi_{K \setminus E}$.
		By Choquet's lemma, cf. \cite[Lemma 4.31]{GuedjZeriahBook}, and (\ref{eq_phi_mu}) there is a countable family $E_i$, $i \in \nat$, $\mu(E_i) = 0$, so that $\phi_{\mu} = (\sup_{i \in \nat} \{ \phi_{K \setminus E_i}^* \})^*$.
		But then as before, for $F := \cup_{i \in \nat} E_i$, we have $\phi_{K \setminus E_i} \leq \phi_{K \setminus F}$ for any $i \in \nat$, and as a consequence, $\phi_{\mu} \leq \phi_{K \setminus F}^*$. 
		Remark also that we trivially have $\mu(F) = 0$.
		But then, as $\phi_{K \setminus F}^* = \phi_{K \setminus F}$ away from a pluripolar (or empty) subset $G \subset X$ by \cite[Theorem 4.42]{GuedjZeriahBook}, for $E := F \cup G$, we obtain $\phi_{K \setminus F}^* \leq \phi_{K \setminus E}$, and $\mu(E) = 0$.
		This implies that $\phi_{\mu} \leq \phi_{K \setminus E}$, which finishes the proof. 
	\end{proof}
	Let us now recall the definition of a \textit{determining measure} from \cite{SiciakDetermMeas}.
	\begin{defn}\label{defn_determ_meas}
		A non-pluripolar Borel measure $\mu$ on a compact subset $K \subset X$ is called determining for the pair $(K, h^L)$ if for each measurable subset $E \subset K$, $\mu(E) = 0$, we have $h^L_{K *} = h^L_{K \setminus E *}$.
	\end{defn}
	\begin{rem}\label{rem_determ_meas}
		a) The definition of a determining measure clearly only depends on the absolutely continuous class of the measure $\mu$.
		\par 
		b) Some authors, cf. \cite{BermanBouckBalls}, instead require that $h^L_K = h^L_{K \setminus E}$, which imposes a stronger condition, as discussed in Proposition \ref{prop_plurireg} below.
	\end{rem}

	\begin{prop}\label{prop_determ_crit}
		The following statements are equivalent: 
		\begin{enumerate}[a)]
			\item The measure $\mu$ on $K$ is determining for $(K, h^L)$.
			\item We have $h^L_{\mu} = h^L_{K *}$.
		\end{enumerate}
	\end{prop}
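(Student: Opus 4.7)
The plan is to translate both statements (a) and (b) into identities between $\omega$-psh potentials $\phi_\mu$ and $\phi_K$ (in the notation of Section \ref{sect_determ}) and then to leverage two structural ingredients already in place: the supremum formula (\ref{eq_phi_mu_form}), which expresses $\phi_\mu$ as a supremum of classical psh envelopes $\phi_{K\setminus F}$ over $\mu$-null sets $F$, and the existence result from Proposition \ref{prop_env_gen_fact}, which guarantees that this supremum is in fact attained by some specific $E\subset K$ with $\mu(E)=0$. The only subtlety throughout will be to keep careful track of upper semi-continuous regularizations, since the envelopes $\phi_E$ are not a priori psh, while $\phi_\mu$ is (by Proposition \ref{prop_env_gen_fact}).

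For the implication (a)$\Rightarrow$(b), I would start by invoking Proposition \ref{prop_env_gen_fact} to produce a set $E\subset K$ with $\mu(E)=0$ and $\phi_\mu=\phi_{K\setminus E}$. Taking upper semi-continuous regularizations and using that $\phi_\mu$ is already psh hence USC, this gives $\phi_\mu=\phi_{K\setminus E\,*}$, which by Definition \ref{defn_determ_meas} equals $\phi_{K*}$. This gives (b). For (b)$\Rightarrow$(a), I would pick any measurable $E\subset K$ with $\mu(E)=0$. The inclusion $K\setminus E\subset K$ immediately yields $\phi_{K*}\leq\phi_{K\setminus E\,*}$, while formula (\ref{eq_phi_mu_form}) gives $\phi_{K\setminus E}\leq\phi_\mu$, and regularizing this yields $\phi_{K\setminus E\,*}\leq\phi_\mu^*=\phi_\mu=\phi_{K*}$ by hypothesis. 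Combining, $\phi_{K*}=\phi_{K\setminus E\,*}$, which is Definition \ref{defn_determ_meas}.

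The main (minor) obstacle is verifying the inequality $\phi_{K\setminus E}\leq\phi_\mu$ from (\ref{eq_phi_mu_form}): this is a straightforward consequence of that formula, since $\phi_{K\setminus E}$ appears as one of the contenders in the supremum defining $\phi_\mu$. All the nontrivial analytic content — namely the boundedness and plurisubharmonicity of $\phi_\mu$ and the attainment of the supremum in (\ref{eq_phi_mu_form}) — is already packaged in Proposition \ref{prop_env_gen_fact} and the subsequent discussion. So the proof essentially reduces to careful bookkeeping of USC regularizations and of the two supremum formulas in (\ref{eq_phi_mu_form1}).
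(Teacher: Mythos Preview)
Your proof is correct and follows essentially the same route as the paper: both directions hinge on Proposition \ref{prop_env_gen_fact} (to produce the extremal set $E$ and to know $\phi_\mu$ is already psh/USC), the sandwich $\phi_K \leq \phi_{K\setminus F} \leq \phi_\mu$ from (\ref{eq_phi_mu_form}), and then passing to USC regularizations. One small notational caution: in the paper's conventions the subscript $*$ denotes \emph{lower} semi-continuous regularization for functions, so what you write as $\phi_{K*}$ should be $\phi_K^*$ (equivalently, the potential of $h^L_{K*}$); your intent is clear from context, but it is worth aligning the notation.
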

	\begin{proof}
		Let $E \subset K$, $\mu(E) = 0$, be given by Proposition \ref{prop_env_gen_fact}, i.e. $h^L_{\mu} = h^L_{K \setminus E}$.
		If $\mu$ is determining, we have $h^L_{K \setminus E *} = h^L_{K *}$, which shows that $a) \Rightarrow b)$, as by Proposition \ref{prop_env_gen_fact}, we have $h^L_{\mu} = h^L_{\mu *}$.
		\par  
		Now, let $F \subset K$ be a measurable subset such that $\mu(F) = 0$.
		We then immediately obtain $h^L_{\mu} \leq h^L_{K \setminus F} \leq h^L_K$, which implies that $h^L_{\mu} \leq h^L_{K \setminus F *} \leq h^L_{K *}$ and immediately shows $b) \Rightarrow a)$.
	\end{proof}
	\par 
	We stress out that there exist examples of compact subsets which are pluriregular for a fixed positive metric, but for which the Lebesgue measure is not determining, see \cite[Example 2.10 and Theorem 3.1]{SiciakExtremal}. 
	By Proposition \ref{prop_determ_crit}, this demonstrates that psh and Lebesgue envelopes do not necessarily coincide, even for such “nice" sets. 
	In particular, the distinction that we refereed to in Remark \ref{rem_thm_regul} is indeed vital for Theorem \ref{thm_distr}.
	\par 
	For the next proposition, we say that $(K, h^L)$ is \textit{$\mu$-pluriregular} if $h^L_{\mu}$ is continuous.
	\begin{prop}\label{prop_plurireg}
		The following statements are equivalent: 
		\begin{enumerate}[a)]
			\item The measure $\mu$ is determining for $(K, h^L)$ and $(K, h^L)$ is pluriregular.
			\item For each measurable subset $E \subset K$, such that $\mu(E) = 0$, we have $h^L_K = h^L_{K \setminus E}$.
			\item For an arbitrary $\psi \in {\rm{PSH}}(X, \omega)$, we have $\esssup_{\mu} (\psi - \phi) = \sup_{K}  (\psi - \phi)$.
			\item We have $h^L_K = h^L_{\mu}$.
			\item The pair $(K, h^L)$ is $\mu$-pluriregular.
		\end{enumerate}
	\end{prop}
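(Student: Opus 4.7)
My plan is to establish the equivalences via the cycle $a) \Rightarrow b) \Rightarrow d) \Rightarrow e) \Rightarrow a)$, and to handle $c) \Leftrightarrow d)$ separately. Throughout, I work in terms of the potentials $\phi$, $\phi_K$, $\phi_\mu$ of $h^L$, $h^L_K$, $h^L_\mu$ via (\ref{eq_pot_metr_corr}).

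For $a) \Rightarrow b)$, fix a measurable $E \subset K$ with $\mu(E) = 0$. Pluriregularity rewrites as $h^L_K = h^L_{K*}$, and the determining property gives $h^L_{K*} = h^L_{K \setminus E *}$; combined with the automatic chain $h^L_{K \setminus E *} \leq h^L_{K \setminus E} \leq h^L_K$ (the last inequality because $K \setminus E \subset K$ relaxes the constraint in (\ref{defn_env})), this forces equality throughout. The step $b) \Rightarrow d)$ is immediate from Proposition \ref{prop_env_gen_fact}: apply $b)$ to the specific $E$ produced by that proposition satisfying $h^L_\mu = h^L_{K \setminus E}$. For $e) \Rightarrow a)$, the key geometric observation is that since $K$ is the support of $\mu$ and $\mu(E) = 0$, the set $K \setminus E$ is dense in $K$---otherwise a relatively open subset of $K$ would be $\mu$-null, contradicting the support hypothesis. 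Combining this density with continuity of $\phi_\mu$ (hypothesis $e)$) and of $\phi$ (continuity of $h^L$) upgrades the bound $\phi_\mu \leq \phi$ on $K \setminus E$ (which follows from $\phi_\mu = \phi_{K \setminus E}$ by Proposition \ref{prop_env_gen_fact}) to $\phi_\mu \leq \phi$ on all of $K$. Hence $\phi_\mu$ competes in the supremum defining $\phi_K$, yielding $\phi_\mu \leq \phi_K$; combined with the automatic reverse bound $\phi_\mu \geq \phi_K$ from (\ref{eq_phi_mu_form}) with $F = \emptyset$, this gives $d)$. Continuity of $\phi_K = \phi_\mu$ is pluriregularity, and the determining property follows from the sandwich $\phi_K \leq \phi_{K \setminus F} \leq \phi_\mu = \phi_K$, valid for any $F$ with $\mu(F) = 0$, upon passage to upper semi-continuous regularizations.

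The equivalence $c) \Leftrightarrow d)$ proceeds via (\ref{eq_esssup_phi_mu_ident}), which rewrites the left-hand side of $c)$ as $\sup_X(\psi - \phi_\mu)$, together with the identity $\sup_K(\psi - \phi) = \sup_X(\psi - \phi_K)$ for $\psi \in {\rm{PSH}}(X, \omega)$. The non-trivial direction of the latter follows by noting that $\psi - \sup_K(\psi - \phi)$ is $\omega$-psh and bounded above by $\phi$ on $K$, hence a competitor for $\phi_K$. Thus $c)$ reduces to $\sup_X(\psi - \phi_\mu) = \sup_X(\psi - \phi_K)$ for all $\psi \in {\rm{PSH}}(X, \omega)$; the direction $d) \Rightarrow c)$ is trivial, while the converse follows by testing with $\psi := \phi_\mu$ (which lies in ${\rm{PSH}}(X, \omega)$ by Proposition \ref{prop_env_gen_fact}), forcing $\phi_\mu \leq \phi_K$ and hence equality.

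The main obstacle is $d) \Rightarrow e)$: to promote the identity $\phi_K = \phi_\mu \in {\rm{PSH}}(X, \omega)$ to continuity of $\phi_\mu$. My approach is to show that the supremum in (\ref{defn_env}) can be restricted to \emph{continuous} $\omega$-psh competitors without changing its value. For each $\omega$-psh $\phi_0 \leq \phi$ on $K$, standard regularization on the projective manifold $X$ produces smooth $\omega$-psh approximants $\phi_0^{(n)}$ (up to a vanishing loss of positivity absorbed by a small shift) decreasing to $\phi_0$; a Dini-type argument for decreasing upper semi-continuous sequences on the compact set $K$ shows that the correction $c_n := \max_K(\phi_0^{(n)} - \phi)^+$ tends to zero, so that $\phi_0^{(n)} - c_n$ are continuous competitors converging pointwise to $\phi_0$. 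The sup of all such continuous competitors is lower semi-continuous and equals $\phi_K$; combined with the upper semi-continuity of $\phi_K$ inherited from its $\omega$-psh character under $d)$, this yields continuity. Alternatively, one may invoke \cite[Proposition 2.11]{GuedjLuZeriahEnv} as a black box.
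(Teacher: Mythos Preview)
Your proof is correct and complete. The overall cycle $a)\Rightarrow b)\Rightarrow d)\Rightarrow e)\Rightarrow a)$ together with $c)\Leftrightarrow d)$ matches the paper's logical structure, and your arguments for $a)\Rightarrow b)$, $b)\Rightarrow d)$, and $e)\Rightarrow d)$ (the density-of-$K\setminus E$ argument) are essentially identical to the paper's.

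There are two genuine differences worth noting. First, for $c)$ the paper simply cites \cite[Proposition 1.12]{BerBoucNys} for the equivalence $b)\Leftrightarrow c)$, whereas you give a self-contained proof of $c)\Leftrightarrow d)$ via (\ref{eq_esssup_phi_mu_ident}) and the identity $\sup_K(\psi-\phi)=\sup_X(\psi-\phi_K)$; your argument is clean and avoids the external reference. Second, and more substantively, for the step $d)\Rightarrow e)$ you need lower semi-continuity of $\phi_K$. You obtain this by Demailly regularization plus a Dini argument to show that continuous $\omega$-psh competitors suffice in the supremum defining $\phi_K$. The paper instead invokes the Siciak--Zakharyuta description (\ref{eq_h_l_k_sic}), which expresses $h^L_K/h^L$ as an infimum over normalized holomorphic sections; since one may restrict to base-point-free finite families, this realizes $\phi_K$ as a supremum of continuous functions and gives lower semi-continuity directly. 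The paper's route is lighter (no regularization theorem needed), while yours stays entirely within pluripotential theory and does not require the algebraic input that $L$ is ample. Both are valid.
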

	\begin{proof}
		The equivalence between $b)$ and $c)$ was established in \cite[Proposition 1.12]{BerBoucNys}.
		To see the implication $a) \Rightarrow b)$ remark that for a measurable subset $E \subset K$, such that $\mu(E) = 0$, the following easy chain of inequalities holds $h^L_{K \setminus E *} = h^L_{K *} = h^L_K \geq h^L_{K \setminus E}$, cf. \cite[Definition 0.2]{SiciakExtremal}, which immediately implies that $h^L_K = h^L_{K \setminus E}$ by the trivial inequality $h^L_{K \setminus E *} \leq h^L_{K \setminus E}$.
		Let us show that $b) \Rightarrow a)$.
		It is immediate that $\mu$ is determining for $(K, h^L)$, as $h^L_K = h^L_E$ implies $h^L_{K *} = h^L_{E *}$.
		The fact that $b)$ implies that $(K, h^L)$ is pluriregular was established in \cite[Proposition 1.12]{BerBoucNys}.
		\par 
		The validity of $b) \Rightarrow d)$ follows immediately from Proposition \ref{prop_env_gen_fact}.
		To discuss the converse, remark that $h^L_K$ has a lower semi-continuous potential, cf. \cite[Proposition 2.12]{SiciakExtremal}.
		The last fact follows immediately from the description 
		\begin{equation}\label{eq_h_l_k_sic}
			\frac{h^L_K(x)}{h^L(x)} = \inf \Big\{
				|s(x)|_{h^{L^{\otimes k}}}^{-\frac{1}{k}}: s \in H^0(X, L^{\otimes k}), \quad \sup_{x \in K} |s(x)|_{h^{L^{\otimes k}}} \leq 1
			\Big\},
		\end{equation}
		which is valid for psh envelopes associated with compact subsets, see \cite[Theorem 15.6]{DemPluripot}. 
		By replacing an infimum over a single section (\ref{eq_h_l_k_sic}) with an infimum over a base-point free subset of sections, we see that we can realize $h^L_K(x) / h^L(x)$ as an infimum of a set of continuous functions.
		As an infimum of a set of continuous functions is automatically upper semi-continuous, we obtain that $h^L_K$ has a lower semi-continuous potential.
		From Proposition \ref{prop_env_gen_fact}, $h^L_{\mu}$ has an upper semi-continuous potential.
		In particular, if $h^L_{\mu} = h^L_K$, then $h^L_K$ is continuous, hence $(K, h^L)$ is pluriregular.
		The fact that $\mu$ is determining if $h^L_{\mu} = h^L_K$ follows from Propositions \ref{prop_env_gen_fact} and \ref{prop_determ_crit}.
		We thus estabished that $d) \Rightarrow a)$.
		\par 
		Clearly, we also have $d) \Rightarrow e)$, as $h^L_K$ is continuous for pluriregular $(K, h^L)$.
		Let us finally show that $e) \Rightarrow d)$.
		We denote by $E \subset K$ a subset from Proposition \ref{prop_env_gen_fact}.
		Remark that $K \setminus E$ is topologically dense in $K$ as the support of $\mu$ is included in the closure of $K \setminus E$ by our standing assumption $\mu(E) = 0$.
		If $h^L_{\mu}$ is continuous, then since $K \setminus E$ is dense in $K$ and $h^L \leq h^L_{\mu}$ over $K \setminus E$, we conclude that $h^L \leq h^L_{\mu}$ over $K$.
		But then by Proposition \ref{prop_env_gen_fact}, $h^L_{\mu}$ is one of the contenders in the infimum from (\ref{defn_nonnegl_env}), which shows that $h^L_K \leq h^L_{\mu}$.
		In conjunction with the trivial bound $h^L_{\mu} \leq h^L_K$, we obtain $e) \Rightarrow d)$, which finishes the proof.
	\end{proof}
	
	We will now describe another relation between $h^L_{\mu}$ and $h^L_K$.
	Following \cite[Definition 7.24]{BouckErik21}, we introduce a regular envelope, $Q(h^L{}')$, of a bounded Hermitian metric $h^L{}'$ on $L$ as
	\begin{equation}\label{eq_reg_env}
		Q(h^L{}')
		:=
		\inf \Big\{
			h^L_0 \text{ with continuous psh potential }: h^L_0 \geq h^L{}'
		\Big\}.
	\end{equation}
	Remark that the metric $Q(h^L{}')$ has automatically a lower semi-continuous potential by the same argument as after (\ref{eq_h_l_k_sic}).
	\begin{prop}\label{prop_regul_cont_equal_extr}
		For any continuous metric $h^L$, we have $Q(h^L_{\mu}) = h^L_K$.
	\end{prop}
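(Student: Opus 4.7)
My plan is to translate the metric identity $Q(h^L_\mu) = h^L_K$ into an identity of $\omega$-psh potentials and prove the two inequalities separately, using Proposition \ref{prop_env_gen_fact} to write $\phi_\mu = \phi_{K \setminus E}$ for some $\mu$-null $E \subset K$. I would fix a positive reference metric $h^L_0$ on $L$, set $\omega := 2\pi c_1(L, h^L_0)$, let $\phi$, $\phi_\mu$, $\phi_K$ denote the potentials of $h^L$, $h^L_\mu$, $h^L_K$, and define $\tilde{Q}(\phi_\mu) := \sup\{\psi : \psi \text{ continuous }\omega\text{-psh},\ \psi \leq \phi_\mu\}$; the goal then becomes $\tilde{Q}(\phi_\mu) = \phi_K$.

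For the easier direction $\tilde{Q}(\phi_\mu) \leq \phi_K$, I would take any continuous $\omega$-psh $\psi \leq \phi_\mu$. From the defining supremum of the envelope, $\phi_\mu = \phi_{K \setminus E} \leq \phi$ on $K \setminus E$, so $\psi \leq \phi$ on $K \setminus E$. Because $\mu$ has support $K$ and $\mu(E) = 0$, every open neighborhood of a point of $K$ has positive $\mu$-measure and so meets $K \setminus E$; hence $K \setminus E$ is dense in $K$, and the continuity of $\psi$ and $\phi$ forces $\psi \leq \phi$ on all of $K$. Thus $\psi$ is a contender for $\phi_K$, and passing to the supremum yields $\tilde{Q}(\phi_\mu) \leq \phi_K$.

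For the reverse direction $\tilde{Q}(\phi_\mu) \geq \phi_K$, I would fix any bounded $\phi_0 \in {\rm PSH}(X, \omega)$ with $\phi_0 \leq \phi$ on $K$; the boundedness restriction is harmless because $\max(\phi_0, -M) \in {\rm PSH}(X, \omega)$ for large $M$ dominates $\phi_0$ and still satisfies the constraint on $K$. Since $\omega$ is Kähler, the Demailly regularization theorem supplies smooth $\omega$-psh functions $\phi_0^{(j)} \downarrow \phi_0$ pointwise. The key technical step is that the continuous functions $(\phi_0^{(j)} - \phi)^+$ decrease monotonically to $0$ on the compact set $K$, so Dini's theorem gives $\delta_j := \sup_K (\phi_0^{(j)} - \phi)^+ \to 0$. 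The shifted smooth $\omega$-psh functions $\phi_0^{(j)} - \delta_j$ then satisfy $\phi_0^{(j)} - \delta_j \leq \phi$ on $K$, hence $\mu$-almost everywhere, and therefore $\phi_0^{(j)} - \delta_j \leq \phi_\mu$ by the supremum characterization (\ref{eq_phi_mu_form1}); they are thus legitimate contenders for $\tilde{Q}(\phi_\mu)$. Letting $j \to \infty$ in the pointwise inequality $\tilde{Q}(\phi_\mu) \geq \phi_0^{(j)} - \delta_j$ yields $\tilde{Q}(\phi_\mu) \geq \phi_0$, and taking the supremum over $\phi_0$ gives $\tilde{Q}(\phi_\mu) \geq \phi_K$.

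The main obstacle is the asymmetry between the two envelopes: $\phi_K$ takes a supremum over an unrestricted class of $\omega$-psh functions (possibly singular, unbounded below, and not even globally $\omega$-psh before USC regularization), while $\tilde{Q}(\phi_\mu)$ accepts only continuous contenders, and $\phi_K$ itself is generally only lower semi-continuous. The device that closes this gap is the pairing of smoothing from above with the uniform constant shift $\delta_j$ provided by Dini's theorem: the shift is a free move that restores the constraint $\leq \phi$ on $K$ inevitably broken by the smoothing, and it is guaranteed to vanish because $K$ is compact and $\phi$ is continuous by hypothesis.
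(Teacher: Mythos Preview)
Your proof is correct. The density-and-continuity argument you give for $\tilde{Q}(\phi_\mu) \leq \phi_K$ is exactly the paper's argument for that direction.

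For the reverse inequality $\tilde{Q}(\phi_\mu) \geq \phi_K$, however, you and the paper take different routes. The paper observes, via the Siciak--Zaharjuta description (\ref{eq_h_l_k_sic}), that $h^L_K$ is already an infimum of continuous metrics with psh potential, i.e.\ $Q(h^L_K) = h^L_K$; since $Q$ is order-preserving and $h^L_\mu \leq h^L_K$ trivially, this gives $Q(h^L_\mu) \leq Q(h^L_K) = h^L_K$ in one line. Your approach instead regularizes an arbitrary bounded contender $\phi_0$ for $\phi_K$ from above by smooth $\omega$-psh functions and uses Dini's theorem on the compact $K$ to shift them back under $\phi$, producing continuous contenders for $\tilde{Q}(\phi_\mu)$ that recover $\phi_0$ in the limit. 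Both arguments are short; the paper's is slicker because it exploits the algebraic structure of sections to manufacture the continuous contenders for free, while yours is purely analytic and would work in settings where a polynomial-type description of the extremal function is unavailable. In fact the paper packages both directions into a cleaner auxiliary statement (Proposition \ref{prop_regul_cont_equal_extr2}): $Q(h^L_E) = h^L_{\overline{E}}$ for any subset $E$, from which the result follows by taking $E = K \setminus E'$ with $\mu(E') = 0$ as supplied by Proposition \ref{prop_env_gen_fact}.
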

	
	Proposition \ref{prop_regul_cont_equal_extr} is a consequence of Proposition \ref{prop_env_gen_fact}, the topological density of $K \setminus E$ in $K$ for any $E \subset K$ verifying $\mu(E) = 0$ (which is a direct consequence of the fact that the support of $\mu$ equals $K$) and the following more general result.
	\begin{prop}\label{prop_regul_cont_equal_extr2}
		For any continuous metric $h^L$ and subset $E \subset X$, we have $Q(h^L_E) = h^L_{\overline{E}}$, where $\overline{E}$ is the closure of $E$.
	\end{prop}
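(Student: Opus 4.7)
The plan is to establish the two inequalities $Q(h^L_E) \geq h^L_{\overline{E}}$ and $Q(h^L_E) \leq h^L_{\overline{E}}$ separately. The pluripolar case where $\overline{E}$ is pluripolar can be disposed of trivially since both envelopes are then identically $+\infty$, so I would assume throughout that $\overline{E}$ is non-pluripolar.

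For the inequality $Q(h^L_E) \geq h^L_{\overline{E}}$, I would take an arbitrary contender $h^L_0$ in the infimum defining $Q(h^L_E)$, i.e., a metric with continuous psh potential satisfying $h^L_0 \geq h^L_E$. Since every contender in (\ref{defn_env}) verifies $h^L_0 \geq h^L$ on $E$, the pointwise infimum does too, giving $h^L_E \geq h^L$ on $E$, and hence $h^L_0 \geq h^L$ on $E$. Continuity of $h^L$ and $h^L_0$ extends this to $\overline{E}$, so $h^L_0$ is a contender in the infimum defining $h^L_{\overline{E}}$, yielding $h^L_0 \geq h^L_{\overline{E}}$. Taking infimum concludes this direction.

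For the reverse inequality, the key observation is that the three conditions $h^L_0 \geq h^L_E$, $h^L_0 \geq h^L$ on $E$, and $h^L_0 \geq h^L$ on $\overline{E}$ are equivalent for any metric $h^L_0$ with continuous psh potential. Indeed, the equivalence of the first two follows because $h^L_E \geq h^L$ on $E$ pointwise, while conversely any psh metric dominating $h^L$ on $E$ is by definition a contender for the infimum in (\ref{defn_env}); the equivalence of the last two follows from continuity. I would then invoke the Siciak--Demailly description (\ref{eq_h_l_k_sic}) applied to the compact subset $\overline{E} \subset X$, which realizes $h^L_{\overline{E}}/h^L$ as an infimum of $|s|_{h^{L^{\otimes k}}}^{-1/k}$ over sections $s \in H^0(X, L^{\otimes k})$ normalized by $\sup_{\overline{E}} |s|_{h^{L^{\otimes k}}} \leq 1$. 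As explained in the discussion just preceding this proposition, passing to base-point free subfamilies realizes this infimum through continuous psh metrics. Each such metric dominates $h^L$ on $\overline{E}$, hence by the equivalence above is a contender in the infimum defining $Q(h^L_E)$, so $Q(h^L_E) \leq h^L_{\overline{E}}$.

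The only genuinely technical point is the base-point free realization in the Siciak--Demailly description, namely that one may replace the singular contenders $h^L \cdot |s|^{-1/k}_{h^{L^{\otimes k}}}$ with continuous ones (e.g.\ of the form $h^L \cdot (\sum_i |s_i|^2_{h^{L^{\otimes k}}})^{-1/(2k)}$ for base-point free $s_i$) without changing the value of the infimum. This is exactly the fact used just above in the proof of Proposition \ref{prop_plurireg} to show that $h^L_K$ has a lower semi-continuous potential, so it can simply be invoked here.
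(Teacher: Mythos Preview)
Your proposal is correct and follows essentially the same approach as the paper. The direction $Q(h^L_E) \geq h^L_{\overline{E}}$ is proved identically; for the reverse, the paper packages the Siciak--Demailly/base-point-free argument as the single fact $Q(h^L_{\overline{E}}) = h^L_{\overline{E}}$ and then applies monotonicity of $Q$ to the trivial bound $h^L_E \leq h^L_{\overline{E}}$, whereas you unfold this by directly checking that the continuous psh contenders for $h^L_{\overline{E}}$ are also contenders for $Q(h^L_E)$ --- the content is the same.
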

	\begin{proof}
		First of all, according to (\ref{eq_h_l_k_sic}) and the discussion after, we have $Q(h^L_{\overline{E}}) = h^L_{\overline{E}}$.
		Since the application of $Q$ clearly preserves the order, we deduce immediately from the trivial bound $h^L_E \leq h^L_{\overline{E}}$ that $Q(h^L_E) \leq h^L_{\overline{E}}$.
		To finish the proof of Proposition \ref{prop_regul_cont_equal_extr}, it is hence enough to establish that if $h^L_0$ has a continuous psh potential and $h^L_0 \geq h^L_E$, then $h^L_0 \geq h^L_{\overline{E}}$.
		By the definition of $h^L_{\overline{E}}$, it suffices to establish that over ${\overline{E}}$, we have $h^L_0 \geq h^L$.
		But this follows directly from the fact that $h^L_E \geq h^L$ over $E$, the continuity of $h^L$, $h^L_0$ and the density of $E$ in ${\overline{E}}$.
	\end{proof}

	\section{Lebesgue dense points and psh envelopes}\label{sect_leb}
	This section aims to specialize the theory from Section \ref{sect_determ} to the Lebesgue envelopes.
	It turns out that a part of the previous reasoning can then be made more precise, and the set $E$ from Proposition \ref{prop_env_gen_fact} admits an explicit description through the subset of Lebesgue dense points.
	\par 
	Recall that a point $x \in X$ is called a \textit{Lebesgue dense point} of a measurable subset $A \subset X$ if 
	\begin{equation}\label{eq_leb_dens}
		\lim_{r \to 0} \frac{\lambda(B_x(r) \cap A)}{\lambda(B_x(r))} = 1,
	\end{equation}
	where $B_x(r)$ is a geodesic ball around $x$ of radius $r > 0$ with respect to some fixed Riemannian metric.
	While the notion of a geodesic ball clearly depends on the choice of the Riemannian metric, the notion of a Lebesgue dense point actually doesn't.
	To see this, remark that geodesic balls associated with different metrics have bounded eccentricity in the sense of \cite[p. 108]{SteinBook}.
	Moreover, by \cite[Corollary 3.1.7]{SteinBook}, for a sequence of open neighborhoods with bounded eccentricity $U_r$, $r > 0$, shrinking to a Lebesgue dense point $x$, as $r \to 0$, one can replace $B_x(r)$ in (\ref{eq_leb_dens}) by $U_r$.
	\par 	 
	Lebesgue's density theorem, cf. \cite[Corollary 3.1.5]{SteinBook}, states that the set $d(A)$ of Lebesgue dense points of $A$ has the full measure in $A$, i.e. $\lambda(A \setminus d(A)) = 0$.
	By applying this statement twice: for $A$ and its complement, we deduce that the symmetric difference between $A$ and $d(A)$ is Lebesgue negligible.
	\begin{prop}\label{prop_determ}
		For any measurable subset $A \subset X$, an arbitrary Kähler form $\omega$ on $X$, $\psi \in {\rm{PSH}}(X, \omega)$ and continuous $\phi : X \to \real$, the following identity holds
		\begin{equation}
			\esssup_A (\psi - \phi) 
			=
			\sup_{d(A)}  (\psi - \phi).
		\end{equation}
	\end{prop}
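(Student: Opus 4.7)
Plan: The inequality $\esssup_A(\psi - \phi) \leq \sup_{d(A)}(\psi - \phi)$ is immediate from Lebesgue's density theorem recalled just before the statement: since $\lambda(A \setminus d(A)) = 0$, any pointwise bound $\psi - \phi \leq m$ on $d(A)$ holds Lebesgue-almost everywhere on $A$, hence dominates the essential supremum. The content of the proposition is therefore the reverse inequality, which I would establish using the sub-mean value inequality for plurisubharmonic functions together with the defining property of Lebesgue density points.

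Set $M := \esssup_A(\psi - \phi)$, fix $x \in d(A)$, and work in a holomorphic chart around $x$ on which $\omega = \imun \partial \dbar \rho$ for some smooth strictly psh $\rho$. Then $\psi + \rho$ is plurisubharmonic on this chart, and the classical sub-mean value inequality yields, for all sufficiently small $r > 0$,
\[
(\psi + \rho)(x) \leq \frac{1}{\lambda(B_x(r))} \int_{B_x(r)} (\psi + \rho) \, d\lambda,
\]
where $B_x(r)$ denotes the Euclidean ball of radius $r$ in the chart. Continuity of $\rho$ gives $\frac{1}{\lambda(B_x(r))} \int_{B_x(r)} \rho \, d\lambda = \rho(x) + o(1)$ as $r \to 0$, so this reduces to
\[
\psi(x) \leq \frac{1}{\lambda(B_x(r))} \int_{B_x(r)} \psi \, d\lambda + o(1) \qquad (r \to 0).
\]

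Next, I split the integral according to $A$ and its complement. By the definition of $M$, the bound $\psi \leq \phi + M$ holds almost everywhere on $A$, so
\[
\int_{B_x(r) \cap A} \psi \, d\lambda \leq \Big(M + \sup_{B_x(r)} \phi \Big) \, \lambda(B_x(r) \cap A).
\]
For the remainder, the constant $C := \sup_X \psi$ is finite since $\psi$ is upper semi-continuous on the compact manifold $X$, giving $\int_{B_x(r) \setminus A} \psi \, d\lambda \leq C \, \lambda(B_x(r) \setminus A)$. Dividing by $\lambda(B_x(r))$, using $x \in d(A)$ so that $\lambda(B_x(r) \cap A)/\lambda(B_x(r)) \to 1$ and $\lambda(B_x(r) \setminus A)/\lambda(B_x(r)) \to 0$, and invoking continuity of $\phi$ at $x$, the passage $r \to 0$ yields $\psi(x) \leq M + \phi(x)$, which is the required pointwise bound on $d(A)$.

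The one point requiring care is that the definition (\ref{eq_leb_dens}) of a Lebesgue density point uses Riemannian geodesic balls, whereas the sub-mean value inequality naturally lives in Euclidean balls of a holomorphic chart. This is precisely what the bounded-eccentricity remark following (\ref{eq_leb_dens}), combined with \cite[Corollary 3.1.7]{SteinBook}, is set up to handle: the two families of balls centred at $x$ produce identical limiting density ratios, and may be interchanged freely in the argument above. Beyond this bookkeeping, I do not foresee any substantial obstacle.
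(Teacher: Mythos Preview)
Your proposal is correct and follows essentially the same approach as the paper: the sub-mean value inequality for the (locally) plurisubharmonic function, splitting the ball integral into $B_x(r)\cap A$ and $B_x(r)\setminus A$, and passing to the limit using the density-point property and continuity of $\phi$. The only differences are presentational — you work directly with $M=\esssup_A(\psi-\phi)$ rather than first reducing to the case $\psi\le\phi$ a.e.\ on $A$, and you are more explicit about the local $\omega$-potential $\rho$ and the Euclidean-versus-geodesic ball bookkeeping, both of which the paper handles more tersely.
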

	\begin{proof}
		By the Lebesgue's density theorem, we clearly have 	
		\begin{equation}
			\esssup_A (\psi - \phi) 
			\leq
			\sup_{d(A)}  (\psi - \phi).
		\end{equation}
		To establish the opposite inequality, it is enough to establish that if $\psi \leq \phi$ almost everywhere on $A$, then $\psi \leq \phi$ over $d(A)$.
		\par 
		Our argument will be local, and we can assume that $\psi$ and $\phi$ are both defined on a unit ball $\mathbb{D}^n \subset \comp^n$, $\psi$ is plurisubharmonic and $\phi$ is continuous.
		Mean-value inequality then implies that for any $x \in \mathbb{D}^n$, $0 < r < 1 - |x|$, we have
		\begin{equation}\label{eq_mean_v_1}
			\psi(x) \leq \frac{\int_{B_x(r)} \psi(y) d y}{\int_{B_x(r)} d y}.
		\end{equation}
		We decompose 
		\begin{equation}\label{eq_mean_v_2}
			\frac{\int_{B_x(r)} \psi(y) d y}{\int_{B_x(r)} d y}
			=
			\frac{\int_{B_x(r) \cap A} \psi(y) d y}{\int_{B_x(r)} d y}
			+
			\frac{\int_{B_x(r) \setminus A} \psi(y) d y}{\int_{B_x(r)} d y}.
		\end{equation}
		Since $\psi$ is bounded from above over compacts (it is upper semi-continuous), for any $x \in d(A)$, the second term on the right-hand side of (\ref{eq_mean_v_2}) tends to zero, as $r \to 0$.
		By our assumption,
		\begin{equation}\label{eq_mean_v_3}
			\frac{\int_{B_x(r) \cap A} \psi(y) d y}{\int_{B_x(r)} d y}
			\leq
			\frac{\int_{B_x(r) \cap A} \phi(y) d y}{\int_{B_x(r)} d y}.
		\end{equation}
		Then for $x \in d(A)$, by the continuity of $\phi$, we see that the term on the right-hand side of (\ref{eq_mean_v_3}) tends to $\phi(x)$, as $r \to 0$.
		A combination of the above estimates yields that $\psi \leq \phi$ over $d(A)$, which finishes the proof.
	\end{proof}
	\par 
	Let us deduce the first crucial consequence of Proposition \ref{prop_determ}, establishing a connection between psh and Lebesgue envelopes. 
	\begin{prop}\label{prop_leb_env_id}
		For any measurable subset $A \subset X$ and continuous metric $h^L$ on $L$, we have $h^L_{{\rm{Leb}}, A} = h^L_{d(A)}$, and the metric $h^L_{{\rm{Leb}}, A}$ has a bounded psh potential. 
	\end{prop}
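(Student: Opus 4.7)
The plan is to deduce this result directly from Proposition~\ref{prop_determ}, with only a routine USC-regularization step to upgrade the conclusion. Fix a positive smooth reference metric $h^L_0$ on $L$, set $\omega := 2\pi c_1(L, h^L_0)$, and let $\phi$ be the (continuous) potential of $h^L$ with respect to $h^L_0$ via (\ref{eq_pot_metr_corr}). Translating the defining infima (\ref{defn_leb_env}) and (\ref{defn_env}) into supremum form for potentials, just as in (\ref{eq_phi_mu_form1}), the two envelopes become
\begin{equation*}
\phi_{{\rm{Leb}}, A} = \sup\bigl\{\psi \in {\rm{PSH}}(X,\omega) : \psi \leq \phi \text{ a.e. on } A\bigr\}, \quad \phi_{d(A)} = \sup\bigl\{\psi \in {\rm{PSH}}(X,\omega) : \psi \leq \phi \text{ on } d(A)\bigr\}.
\end{equation*}

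The key step is to apply Proposition~\ref{prop_determ} to each individual $\psi \in {\rm{PSH}}(X,\omega)$ together with the continuous $\phi$: the identity $\esssup_A(\psi-\phi) = \sup_{d(A)}(\psi-\phi)$ shows that $\psi \leq \phi$ almost everywhere on $A$ if and only if $\psi \leq \phi$ pointwise on $d(A)$. Thus the two suprema above range over identical families of $\omega$-psh functions, yielding $\phi_{{\rm{Leb}}, A} = \phi_{d(A)}$, which is the equality $h^L_{{\rm{Leb}}, A} = h^L_{d(A)}$.

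To conclude that the common envelope is itself $\omega$-psh (not just after USC regularization), I would argue as in the proof of Proposition~\ref{prop_env_gen_fact}: the regularization $(\phi_{{\rm{Leb}}, A})^*$ lies in ${\rm{PSH}}(X,\omega)$ by \cite[Theorem 4.42]{GuedjZeriahBook} and coincides with $\phi_{{\rm{Leb}}, A}$ off a pluripolar set. Since pluripolar subsets of $X$ are Lebesgue-negligible, the inequality $(\phi_{{\rm{Leb}}, A})^* \leq \phi$ still holds a.e.\ on $A$, so $(\phi_{{\rm{Leb}}, A})^*$ is itself a contender in the supremum defining $\phi_{{\rm{Leb}}, A}$, giving $(\phi_{{\rm{Leb}}, A})^* \leq \phi_{{\rm{Leb}}, A}$. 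The reverse inequality is trivial, whence $\phi_{{\rm{Leb}}, A} \in {\rm{PSH}}(X,\omega)$.

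For boundedness, under the standing assumption that $A$ is Lebesgue non-negligible, Lebesgue's density theorem gives $\lambda(d(A)) = \lambda(A) > 0$, so $d(A)$ is non-pluripolar. The boundedness of $\phi_{d(A)} = \phi_{{\rm{Leb}}, A}$ then follows from \cite[Theorem 9.17]{GuedZeriGeomAnal}. I do not anticipate any serious obstacle here: the content of the proposition is really just that Proposition~\ref{prop_determ} converts an almost-everywhere constraint into a pointwise constraint on $d(A)$, and everything else is bookkeeping with USC regularization and classical boundedness of psh envelopes over non-pluripolar sets.
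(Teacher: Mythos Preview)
Your proof is correct and follows essentially the same approach as the paper: the identity $h^L_{{\rm Leb},A}=h^L_{d(A)}$ is reduced via Proposition~\ref{prop_determ} to the equality of the two contender families, and the bounded psh property is obtained by the USC-regularization argument from Proposition~\ref{prop_env_gen_fact} together with the non-pluripolarity of $d(A)$ (via Lebesgue's density theorem). One cosmetic point: you should invoke boundedness \emph{before} the USC step, since $(\phi_{{\rm Leb},A})^*\in{\rm PSH}(X,\omega)$ requires the family to be locally bounded above; this is already available from $\phi_{{\rm Leb},A}=\phi_{d(A)}$ and \cite[Theorem 9.17]{GuedZeriGeomAnal}, so it is purely a matter of ordering.
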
 
	\begin{proof}
		At the level of potentials, the first statement reduces to an identity
		\begin{multline}\label{eq_ident_set_psh}
			\Big\{
			\psi \in {\rm{PSH}}(X, \omega) : \psi \leq \phi \text{ over } d(A)
			\Big\}
			\\
			=
			\Big\{
			\psi \in {\rm{PSH}}(X, \omega) : \psi \leq \phi \text{ almost everywhere on } A
			\Big\},
		\end{multline}
		where $\phi$ is the potential of $h^L$, calculated as in (\ref{eq_pot_metr_corr}) for a fixed positive metric $h^L_0$, verifying $\omega := 2 \pi c_1(L, h^L_0)$.
		But (\ref{eq_ident_set_psh}) is a direct consequence of Proposition \ref{prop_determ}.
		The proof of the second statement is an easy adaptation of Proposition \ref{prop_env_gen_fact}; the only additional ingredient one has to use is the Lebesgue's density theorem, i.e. that $d(A)$ has the same Lebesgue measure as $A$.
		We leave the details to the interested reader.
	\end{proof}
	
	\begin{proof}[Proof of Proposition \ref{prop_leb_env_all}]
		The first part follows from Proposition \ref{prop_leb_env_id} and the trivial fact that $h^L_X = h^L$ for any $h^L$ with a psh potential.
		The second part follows by Remark \ref{rem_leb_env_id}.
	\end{proof}

	\par 
	We finish this section with an example of a domain satisfying the assumptions of Theorem \ref{thm_regul}.
	\par 
	\begin{prop}\label{prop_example_leb_plur}
		Assume $U \subset X$ is an open subset with a $\mathscr{C}^1$-boundary. 
		Then $(\overline{U}, h^L)$ is Lebesgue pluriregular.
	\end{prop}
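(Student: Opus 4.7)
The plan is to apply the equivalence $(a) \Leftrightarrow (e)$ of Proposition \ref{prop_plurireg}, which reduces Lebesgue pluriregularity of $(\overline{U}, h^L)$ to (i) classical $L$-pluriregularity of $(\overline{U}, h^L)$ and (ii) the determining-ness of the Lebesgue measure on $\overline{U}$ in the sense of Definition \ref{defn_determ_meas}.

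For (i), the compact set $\overline{U}$ has $\ccal^1$-boundary, which is a classical sufficient condition for $L$-pluriregularity: one constructs local $\omega$-psh peak functions at every boundary point using the exterior tangent half-space to $\partial U$ and patches them by Richberg-type regularization, yielding the continuity of $h^L_{\overline{U}}$.

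For (ii), by Proposition \ref{prop_plurireg}(c) it suffices to verify that $\esssup_{\overline{U}}(\psi - \phi) = \sup_{\overline{U}}(\psi - \phi)$ for every $\omega$-psh $\psi$, where $\phi$ denotes the potential of $h^L$. Proposition \ref{prop_determ} identifies the left-hand side with $\sup_{d(\overline{U})}(\psi - \phi)$. Since $\partial U$ is a real $\ccal^1$-hypersurface in $X$, it has Lebesgue measure zero and every $x_0 \in \partial U$ has Lebesgue density exactly $1/2$ in $\overline{U}$ (by local flattening of the boundary), while interior points have density one and exterior points density zero; hence $d(\overline{U}) = U$. It remains to establish $\sup_U(\psi - \phi) = \sup_{\overline{U}}(\psi - \phi)$. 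By compactness of $\overline{U}$ and upper semicontinuity of $\psi - \phi$, the latter supremum is attained at some $x_0 \in \overline{U}$; if $x_0 \in U$ we are done, so assume $x_0 \in \partial U$. The $\ccal^1$-condition supplies an interior tangent cone to $U$ at $x_0$, hence $U$ is non-thin at $x_0$. The classical fact that $\omega$-psh functions are fine-upper-semicontinuous gives $\psi(x_0) = \limsup_{y \to x_0,\, y \in U} \psi(y)$, and combined with continuity of $\phi$ this yields $(\psi - \phi)(x_0) = \limsup_{y \to x_0,\, y \in U}(\psi - \phi)(y) \leq \sup_U(\psi - \phi)$, as required.

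The principal obstacle is the non-thinness step. The naive submean-value estimate only yields a bound of the form $\psi(x_0) \leq \tfrac{1}{2}\phi(x_0) + \tfrac{1}{2}\sup_X \psi$ at a $\ccal^1$-boundary point, which is insufficient; the genuine pluripotential-theoretic input that $\ccal^1$-boundary points are non-thin for $U$, and that $\omega$-psh functions agree with their fine upper limits at non-thin points, is essential. The classical pluriregularity in step (i) relies on the same $\ccal^1$-regularity of $\partial U$, so both ingredients are extracted from the same geometric hypothesis.
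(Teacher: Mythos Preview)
Your proposal is correct and rests on the same pluripotential input as the paper's proof --- the non-thinness of $U$ at each point of its $\mathscr{C}^1$-boundary --- but the logical organization differs. You invoke the equivalence $(a)\Leftrightarrow(e)$ of Proposition~\ref{prop_plurireg} and verify the two conditions in $(a)$ separately: classical pluriregularity of $\overline{U}$ (cited as standard), and the determining property via condition $(c)$, for which you compute $d(\overline{U})=U$ and then use non-thinness to pass from $\sup_U$ to $\sup_{\overline{U}}$. The paper instead establishes the single envelope identity $h^L_{\overline{U}}=h^L_U$, sandwiches $h^L_U\le h^L_{{\rm Leb},\overline{U}}\le h^L_{\overline{U}}$ (using Proposition~\ref{prop_leb_env_id} and $U\subset d(\overline{U})$), and concludes via $(d)\Leftrightarrow(e)$. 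The paper's route is slightly more economical: the identity $h^L_{\overline{U}}=h^L_U$ delivers pluriregularity and the determining property simultaneously, whereas in your argument the ``classical'' pluriregularity you cite in step~(i) is itself typically proved by the very non-thinness argument you spell out in step~(ii), so you are effectively using the same fact twice. Both arguments are valid; the paper's avoids this redundancy.
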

	\begin{proof}
		Let us show that it is sufficient to establish that $h^L_{\overline{U}} = h^L_U$.
		Indeed, by Proposition \ref{prop_leb_env_id} and the obvious fact that $\overline{U}$ is Lebesgue dense at every point of $U$, we immediately get $h^L_U \leq h^L_{{\rm{Leb}}, \overline{U}} \leq h^L_{\overline{U}}$. 
		Hence the identity $h^L_{\overline{U}} = h^L_U$ implies that $h^L_{\overline{U}} = h^L_{{\rm{Leb}}, \overline{U}}$, which according to Proposition \ref{prop_plurireg} is equivalent to the Lebesgue pluriregularity of $(\overline{U}, h^L)$.
		\par 
		Let us hence establish that $h^L_{\overline{U}} = h^L_U$.
		By the trivial bound $h^L_{\overline{U}} \geq h^L_U$, the fact that $h^L_U$ has a psh potential, cf. \cite[Proposition 9.19]{GuedjZeriahBook}, and the definition of $h^L_{\overline{U}}$, it only suffices to show that $h^L_U \geq h^L$ over $\overline{U}$.
		The proof of this fact is an easy modification of \cite[Corollary 5.3.13]{KlimekBook}, and so we will be brief.
		By the analytic accessibility criterion, cf. \cite[Proposition 5.3.12]{KlimekBook}, the set $U$ is non-thin at every point $a \in \partial \overline{U}$ in the sense of \cite[(4.8.1)]{KlimekBook} and \cite[Corollary 4.8.3]{KlimekBook}, i.e. there is an open neighborhood $V$ of $a$, such that for an arbitrary psh function $\psi$, defined in $V$, we have 
		\begin{equation}
			\limsup_{{\substack{z \to a \\ z \in U \cap V \setminus \{a\}}}} \psi(z) = \psi(a),
		\end{equation}
		opposed to the inequality where $=$ is replaced by $\leq$, which would be a trivial consequence of the upper semi-continuity of $\psi$.
		By applying this for the local potential of $h^L_U$, using the fact that $h^L$ is continuous and that over $U$, we have $h^L_U \geq h^L$, we deduce that $h^L_U(a) \geq h^L(a)$ at every $a \in \partial U$, which finishes the proof.
	\end{proof}
	Alternatively, Proposition \ref{prop_example_leb_plur} follows from Proposition \ref{prop_plurireg} and \cite[Propositions 2.17 and 2.21]{BermanPriors}, establishing that the Lebesgue measure on $\overline{U}$ is determining and that $\overline{U}$ is locally pluriregular.
	
	\section{Bernstein-Markov type properties for the Lebesgue measures}\label{sect_bm}
	The primary objective of this section is to examine different versions of the Bernstein-Markov property for the Lebesgue measures on compact subsets.
	\par 
	Recall that a positive measure $\mu$ supported on a compact subset $K \subset X$ is said to be \textit{Bernstein-Markov} with respect to $(K, h^L)$, where $h^L$ is a continuous metric on $L$, if for each $\epsilon > 0$, there is $C > 0$ such that for any $k \in \nat$, we have
	\begin{equation}\label{eq_bm_clas}
		{\textrm{Ban}}_k^{\infty}(K, h^L)
		\leq
		C
		\cdot
		\exp(\epsilon k)
		\cdot
		{\textrm{Hilb}}_k(h^L, \mu).
	\end{equation}
	\par 
	It is easy to see that the result of Tian \cite{TianBerg} on the asymptotics of Bergman kernel implies that the Bernstein-Markov property is satisfied for the Lebesgue measure on $X$, when $h^L$ is a smooth positive metric, cf. \cite[\S 2.1]{BermanBouckBalls}.
	Demailly regularization theorem, \cite{DemRegul}, \cite{GuedZeriGeomAnal}, immediately implies that it holds more generally for an arbitrary continuous metric with psh potential, cf. \cite[\S 2.1]{BermanBouckBalls}.
	\par 
	We fix a reference metric $h^L_*$, let $\omega := 2 \pi c_1(L, h^L_*)$, and denote by $\phi$ the potential of $h^L$.
	Following \cite{SiciakDetermMeas} and \cite{BerBoucNys}, we say that $\mu$ is \textit{Bernstein-Markov with respect to psh weights} if for any $\epsilon > 0$, there is $C > 0$, such that for any $\psi \in {\rm{PSH}}(X, \omega)$, $p \geq 1$, we have
	\begin{equation}\label{bm_psh_weights}
		\sup_K \big( \exp(p(\psi - \phi)) \big)
		\leq
		C
		\cdot
		\exp(\epsilon p)
		\cdot
		\int_X \exp(p(\psi - \phi)) d \mu.
	\end{equation}
	It is immediate that $\mu$ is Bernstein-Markov for  $(K, h^L)$ if it is Bernstein-Markov with respect to psh weights, since by plugging in $\psi := \frac{1}{k} \log |s|_{h^{L^{\otimes k}}_*}$ and $p = 2 k$ in (\ref{bm_weights}), we would get (\ref{eq_asympt_bm}). 
	\par 
	Let us recall the following result from \cite[Theorem 1.14]{BerBoucNys} (see also \cite[Theorems 5.1 and 5.2]{SiciakDetermMeas}).
	\begin{thm}\label{thm_bbwn_determining}
		For a compact subset $K \subset X$, a continuous metric $h^L$ on $L$ and a non-pluripolar Borel probability measure $\mu$, the following statements are equivalent: 
		\begin{enumerate}[a)]
			\item The measure $\mu$ is determining for $(K, h^L)$ and $(K, h^L)$ is pluriregular.
			\item The measure $\mu$ is Bernstein-Markov with respect to psh weights for $(K, h^L)$.
		\end{enumerate}
	\end{thm}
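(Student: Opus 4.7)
The plan is to prove the two implications separately, using as pivotal reformulation that, by part $c)$ of Proposition \ref{prop_plurireg}, condition (a) is equivalent to the identity
\[
\esssup_\mu(\psi - \phi) = \sup_K(\psi - \phi) \quad \text{for every } \psi \in {\rm{PSH}}(X, \omega).
\]

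The implication $(b) \Rightarrow (a)$ is the soft direction. Taking logarithms in (\ref{bm_psh_weights}), dividing by $p$, and invoking the standard $L^p$-to-$L^\infty$ limit $\lim_{p \to \infty} \frac{1}{p} \log \int_X \exp(pu) d\mu = \esssup_\mu u$ valid for $u \in L^\infty(X)$, I would obtain $\sup_K(\psi - \phi) \leq \epsilon + \esssup_\mu(\psi - \phi)$ for every $\epsilon > 0$ and every $\psi \in {\rm{PSH}}(X, \omega)$. The reverse inequality is trivial, since $\mu$ is supported on $K$ and $\psi - \phi$ is upper semi-continuous. Letting $\epsilon \to 0$ and invoking Proposition \ref{prop_plurireg} gives (a).

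For $(a) \Rightarrow (b)$, my approach is a compactness/contradiction argument. Suppose BM with psh weights fails: there exist $\epsilon > 0$, $p_k \to +\infty$, and $\psi_k \in {\rm{PSH}}(X, \omega)$ normalized by $\sup_X \psi_k = 0$ with
\[
\sup_K(\psi_k - \phi) - \frac{1}{p_k} \log \int_X \exp(p_k(\psi_k - \phi)) d\mu \geq 3\epsilon.
\]
By the $L^1$-compactness of the normalized class $\{\psi \in {\rm{PSH}}(X, \omega) : \sup_X \psi = 0\}$, I would pass to a subsequence converging in $L^1(X)$ and Lebesgue-almost everywhere, and set $\Psi := (\limsup_k \psi_k)^*$, which is $\omega$-psh. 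The pluripotential Hartogs lemma then gives the upper bound $\limsup_k \sup_K(\psi_k - \phi) \leq \sup_K(\Psi - \phi)$. For the integral, I would lower-bound $\int_X \exp(p_k(\psi_k - \phi)) d\mu$ by restricting to a set of positive $\mu$-mass where $\Psi - \phi$ is within $\delta$ of $\esssup_\mu(\Psi - \phi)$, and then use Egoroff (exploiting that $\mu$ is non-pluripolar and hence does not charge the exceptional set where regularization alters the values) to force $\psi_k - \phi$ to be close to $\Psi - \phi$ on a set of $\mu$-mass bounded away from $0$; this yields
\[
\liminf_k \frac{1}{p_k} \log \int_X \exp(p_k(\psi_k - \phi)) d\mu \geq \esssup_\mu(\Psi - \phi) - \delta.
\]
Applying hypothesis (a) to $\Psi$ via Proposition \ref{prop_plurireg} c) replaces $\esssup_\mu(\Psi - \phi)$ by $\sup_K(\Psi - \phi)$, and combining this with the Hartogs upper bound contradicts the $3\epsilon$ gap once $\delta$ is chosen small enough.

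The main obstacle lies in harmonizing the two coupled limits $p_k \to \infty$ and $\psi_k \to \Psi$: the Hartogs-type upper semicontinuity for $\sup_K(\psi_k - \phi)$ is standard (and uses continuity of $\phi$), but the reverse Fatou-type lower bound on the exponential integral with respect to $\mu$ requires upgrading Lebesgue-based $L^1$-convergence of the $\psi_k$ into a $\mu$-quantitative statement on level sets of $\Psi - \phi$. This upgrade is precisely the point where the non-pluripolarity of $\mu$ is indispensable, and where condition (a), through the identity in Proposition \ref{prop_plurireg} c) applied to the $\omega$-psh limit $\Psi$, supplies the bridge that equates $\esssup_\mu$ with $\sup_K$ and closes the contradiction.
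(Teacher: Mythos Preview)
The paper does not prove this theorem: it is quoted from \cite[Theorem 1.14]{BerBoucNys}. The argument of \cite{BerBoucNys}, however, is reproduced (in a generalized form) in the paper's proof of Theorem~\ref{thm_asympt_bm}, so one can compare against that. There the hard direction $(a)\Rightarrow(b)$ is obtained by a Dini argument: one shows that the functionals $F_p(\psi)=\frac1p\log\int e^{p(\psi-\phi)}d\mu$ are \emph{continuous} on ${\rm PSH}(X,\omega)$ (this is \cite[Lemma 1.14]{BerBoucNys}), increasing in $p$, and converge pointwise to $F(\psi)=\sup_K(\psi-\phi)$, which is upper semi-continuous by Hartogs; Dini on the compact quotient $\mathcal{T}(X,\omega)$ then yields uniform convergence, which is exactly (\ref{bm_psh_weights}).

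Your $(b)\Rightarrow(a)$ is fine and matches the standard argument. For $(a)\Rightarrow(b)$ your compactness/contradiction scheme is morally the Dini argument unraveled, but the step you flag as the ``main obstacle'' is a genuine gap as written. From $L^1(\mathrm{Lebesgue})$-convergence $\psi_k\to\Psi$ you pass to a subsequence converging Lebesgue-a.e., and non-pluripolarity of $\mu$ gives you $\limsup_k\psi_k=(\limsup_k\psi_k)^*=\Psi$ $\mu$-a.e.\ (the exceptional set is pluripolar). But Egoroff with respect to $\mu$ requires $\psi_k\to\Psi$ $\mu$-a.e., i.e.\ also $\liminf_k\psi_k\ge\Psi$ $\mu$-a.e. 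This lower bound does \emph{not} follow from Lebesgue-a.e.\ convergence when $\mu$ is singular with respect to Lebesgue (think of $\mu$ supported on a totally real circle in $\mathbb P^1$, as in Section~\ref{sect_gen_toepl}): a sequence of $\omega$-psh functions can drop on a Lebesgue-null set that carries all of $\mu$. Thus your lower bound
\[
\liminf_k \tfrac{1}{p_k}\log\int e^{p_k(\psi_k-\phi)}\,d\mu \ \ge\ \esssup_\mu(\Psi-\phi)-\delta
\]
is precisely the lower semi-continuity of $F_p$ along $\psi_k\to\Psi$, which is the non-trivial content of the lemma from \cite{BerBoucNys} that the Dini approach invokes as a black box. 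Invoking condition $(a)$ via Proposition~\ref{prop_plurireg} c) only helps you once you already know $F_{p_k}(\psi_k)$ is close to $F(\Psi)$; it does not by itself upgrade Lebesgue convergence to $\mu$-convergence. To complete your route you would have to supply an independent proof of that lower semi-continuity (e.g.\ via a capacity argument), at which point the two approaches coincide.
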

	\par 
	\begin{sloppypar}
	We will now explain one consequence of the above result, which will be particularly relevant in what follows.
	We say that two graded norms $N = \oplus_{k = 0}^{\infty} N_k$, $N' = \oplus_{k = 0}^{\infty} N_k'$ over $R(X, L)$ are \textit{equivalent} ($N \sim N'$) if the multiplicative gap between the graded pieces, $N_k$ and $N_k'$, is subexponential. This means that for any $\epsilon > 0$, there is $k_0 \in \nat$, such that for any $k \geq k_0$, we have
	\begin{equation}\label{defn_equiv_rel}
		\exp(-\epsilon k) \cdot N_k
		\leq
		N_k'
		\leq
		\exp(\epsilon k) \cdot N_k.
	\end{equation}
	Remark that the Bernstein-Markov condition can then be restated as the equivalence between ${\textrm{Ban}}_k^{\infty}(K, h^L)$ and ${\textrm{Hilb}}_k(h^L, \mu)$.
	\par 
	We now fix $f \in L^{\infty}(X)$, $f \neq 0$, and use the same notation $K$, $Z$, $NZ$ for the associated measurable subsets as in Introduction. 
	Let $\chi$ be an arbitrary smooth volume form on $X$. 
	\begin{cor}\label{thm_equiv_1}
		Assume that for a continuous metric $h^L$ on $L$, the pair $(K, h^L)$ is pluriregular and the Lebesgue measure on $NZ$ is determining for $(K, h^L)$.
		Then the norms $\oplus_{k = 0}^{\infty} {\textrm{Hilb}}_k(h^L, f \cdot \chi)$ and $\oplus_{k = 0}^{\infty} {\textrm{Hilb}}_k(h^L_K, \chi)$ are equivalent.
	\end{cor}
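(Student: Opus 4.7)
The plan is to recognize that the weighted measure $f \cdot \chi$ is automatically in the same determining class as the Lebesgue measure on $NZ$, and then to invoke the Bernstein-Markov inequality with psh weights for this very measure. Concretely, a Borel set $E$ has vanishing $(f \cdot \chi)$-measure if and only if $f$ vanishes almost everywhere on $E$, which (since $f > 0$ precisely on $NZ$) is equivalent to $\lambda(E \cap NZ) = 0$. Hence the measures $f \cdot \chi$ and $1_{NZ} \cdot \chi$ are mutually absolutely continuous, so by Remark \ref{rem_determ_meas} a) the (normalized) measure $f \cdot \chi$ is determining for $(K, h^L)$, and Theorem \ref{thm_bbwn_determining} combined with the assumed pluriregularity yields that $f \cdot \chi$ satisfies the Bernstein-Markov inequality (\ref{bm_psh_weights}) (up to absorbing the normalizing constant into $C$).

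The easy half of the equivalence will follow essentially for free: since $f$ is essentially supported in $K$ and $h^L_K \geq h^L$ over $K$ by the definition of the envelope (\ref{defn_env}), one would estimate, for any $s \in H^0(X, L^{\otimes k})$,
\begin{equation*}
  \int_X |s|^2_{h^{L^{\otimes k}}} \cdot f \cdot \chi
  = \int_K |s|^2_{h^{L^{\otimes k}}} \cdot f \cdot \chi
  \leq \int_K |s|^2_{(h^L_K)^{\otimes k}} \cdot f \cdot \chi
  \leq \|f\|_{\infty} \int_X |s|^2_{(h^L_K)^{\otimes k}} \chi,
\end{equation*}
which after taking square roots yields one of the inequalities in (\ref{defn_equiv_rel}) with a constant (hence subexponential) factor.

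For the reverse direction I would fix a smooth positive reference metric $h^L_0$ as in (\ref{eq_pot_metr_corr}), set $\omega := 2 \pi c_1(L, h^L_0)$, and let $\phi, \phi_K$ denote the potentials of $h^L, h^L_K$. For $s \in H^0(X, L^{\otimes k})$, the function $\psi := \frac{1}{k} \log |s|^2_{(h^L_0)^{\otimes k}}$ lies in ${\rm{PSH}}(X, \omega)$, and one has $|s|^2_{h^{L^{\otimes k}}} = \exp(k(\psi - \phi))$ and $|s|^2_{(h^L_K)^{\otimes k}} = \exp(k(\psi - \phi_K))$. I would next observe that $\psi - \sup_K(\psi - \phi)$ is $\omega$-psh and bounded above by $\phi$ on $K$, hence is a competitor in the supremum (\ref{eq_phi_mu_form1}) defining $\phi_K$; this gives $\sup_X(\psi - \phi_K) \leq \sup_K(\psi - \phi)$, which combined with the trivial reverse inequality (from $\phi_K \leq \phi$ on $K$) yields the extremal identity $\sup_X |s|^2_{(h^L_K)^{\otimes k}} = \sup_K |s|^2_{h^{L^{\otimes k}}}$. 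Applying the Bernstein-Markov inequality from the first paragraph to $\psi$ at $p = k$ would then give
\begin{equation*}
  \int_X |s|^2_{(h^L_K)^{\otimes k}} \chi
  \leq \Big( \int_X \chi \Big) \cdot \sup_K |s|^2_{h^{L^{\otimes k}}}
  \leq C \exp(\epsilon k) \int_X |s|^2_{h^{L^{\otimes k}}} \cdot f \cdot \chi,
\end{equation*}
which, after taking square roots, is the remaining subexponential bound required by (\ref{defn_equiv_rel}).

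The only real obstacle is the opening observation that $f \cdot \chi$ inherits the determining property from the Lebesgue measure on $NZ$; once this is established, the remainder is a standard manipulation combining the extremal characterization of the Siciak envelope with the Bernstein-Markov inequality supplied by Theorem \ref{thm_bbwn_determining}.
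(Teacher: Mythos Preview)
Your proof is correct and follows essentially the same route as the paper's. Both arguments first observe that $f\cdot\chi$ and $1_{NZ}\cdot\chi$ are mutually absolutely continuous, transfer the determining property via Remark~\ref{rem_determ_meas}, and then invoke Theorem~\ref{thm_bbwn_determining} to obtain the Bernstein--Markov inequality for $f\cdot\chi$; the passage to ${\textrm{Hilb}}_k(h^L_K,\chi)$ then goes through the extremal identity ${\textrm{Ban}}_k^{\infty}(K,h^L)={\textrm{Ban}}_k^{\infty}(h^L_K)$. The only cosmetic differences are that you reprove this extremal identity by hand (the paper cites it as (\ref{rem_linf_subs})), and for the comparison between ${\textrm{Ban}}_k^{\infty}(h^L_K)$ and ${\textrm{Hilb}}_k(h^L_K,\chi)$ you use the trivial bound $\int_X\leq(\int_X\chi)\cdot\sup_X$ in one direction and a direct pointwise estimate in the other, whereas the paper invokes the Bernstein--Markov property of the Lebesgue measure on $(X,h^L_K)$ to get both directions at once.
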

	\end{sloppypar}
	\begin{proof}
		By the Radon-Nikodym theorem, cf. \cite[Theorem 6.4.3]{SteinBook}, the measures $f d \lambda_X$ and $d \lambda_{NZ}$ are absolutely continuous with respect to each other.
		From this, Remark \ref{rem_determ_meas} and Theorem \ref{thm_bbwn_determining}, we establish that $\oplus_{k = 0}^{\infty} {\textrm{Hilb}}_k(h^L, f \cdot \chi)$ is equivalent to $\oplus_{k = 0}^{\infty} {\textrm{Ban}}_k^{\infty}(K, h^L)$.
		\par 
		By \cite[Proposition 1.8]{BermanBouckBalls} and \cite[Corollary 7.27]{BouckErik21}, for any non-pluripolar $E \subset X$, we have
		\begin{equation}\label{rem_linf_subs}
			{\textrm{Ban}}_k^{\infty}(E, h^L)
			=
			{\textrm{Ban}}_k^{\infty}(h^L_E)
			=
			{\textrm{Ban}}_k^{\infty}(Q(h^L_E)).
		\end{equation}
		\par 
		Remark that since $h^L_K$ is continuous and has a psh potential, the Bernstein-Markov property on $(X, h^L_K)$ is satisfied for the Lebesgue measure on $X$.
		From this and (\ref{rem_linf_subs}), we deduce that $\oplus_{k = 0}^{\infty} {\textrm{Ban}}_k^{\infty}(K, h^L)$ is equivalent to $\oplus_{k = 0}^{\infty} {\textrm{Hilb}}_k(h^L_K, \chi)$, which finishes the proof.
	\end{proof}
	\par 
	Since the above result doesn't hold with no pluriregularity assumption and the determining assumption on the Lebesgue measure, see Proposition \ref{thm_bbwn_determining_new} below, we need a weaker equivalence relation on the set of graded norms.
	We first define the \textit{logarithmic relative spectrum} of a norm $N_1$ on a finitely dimensional vector space $V$, $\dim V =: r$, with respect to another norm $N_2$ on $V$, as a non-increasing sequence $\lambda_j := \lambda_j(N_1, N_2) \in \real$, $j = 1, \cdots, r$, defined so that
	\begin{equation}\label{eq_log_rel_spec}
		\lambda_j
		:=
		\sup_{\substack{W \subset V \\ \dim W = j}} 
		\inf_{w \in W \setminus \{0\}} \log \frac{\| w \|_2}{\| w \|_1}.
	\end{equation}
	For $p \in [1, + \infty[$, we then let
	\begin{equation}\label{eq_dp_defn_norms}
		d_p(N_1, N_2) := \sqrt[p]{\frac{\sum_{i = 1}^{r} |\lambda_i|^p}{r}},
		\qquad 
		d_{+ \infty}(N, N') := \max \big\{ |\lambda_1|, |\lambda_r| \big\}.
	\end{equation} 
	The graded norms $N = \oplus_{k = 0}^{\infty} N_k$ and $N' = \oplus_{k = 0}^{\infty} N_k'$ on $R(X, L)$ are $p$-\textit{equivalent} ($N \sim_p N'$) if 
 	\begin{equation}\label{defn_equiv_relp}
		\frac{1}{k} d_p(N_k, N_k') \to 0, \qquad \text{as } k \to \infty.
	\end{equation}
	In \cite[\S 2.3]{FinNarSim}, we established that $\sim_p$, $p \in [1, +\infty]$, is an equivalence relation and $\sim$ equals $\sim_{+ \infty}$. 
	\par 
	The following result, which we establish in Section \ref{sect_bern_mark}, gives a weaker version of Corollary \ref{thm_equiv_1} without any assumption on the weight.
	\par 
	\begin{sloppypar}
	\begin{thm}\label{thm_equiv_2}
		For an arbitrary $f \in L^{\infty}(X)$, $f \neq 0$, and a continuous metric $h^L$ on $L$, the norms $\oplus_{k = 0}^{\infty} {\textrm{Hilb}}_k(h^L, f \cdot \chi)$ and $\oplus_{k = 0}^{\infty} {\textrm{Hilb}}_k(h^L_{{\rm{Leb}}, NZ}, \chi)$ are $p$-equivalent for any $p \in [1, +\infty[$.
		If, moreover, $h^L_{{\rm{Leb}}, NZ}$ is continuous, then the above equivalence even holds for $p = + \infty$.
		Finally, there is a positive smooth metric $h^L_0$ on $L$, so that ${\textrm{Hilb}}_k(h^L, f \cdot \chi) \geq {\textrm{Hilb}}_k(h^L_0, \chi)$ for any $k \in \nat$.
	\end{thm}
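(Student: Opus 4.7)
\emph{Easy direction.} The uniform inequality ${\textrm{Hilb}}_k(h^L, f \cdot \chi) \leq \|f\|_{L^{\infty}(X)} \cdot {\textrm{Hilb}}_k(h^L_{{\rm{Leb}}, NZ}, \chi)$, valid for all $k$, follows from the fact that $f\chi$ is supported on $NZ$ (up to a Lebesgue-null set) combined with the pointwise inequality $h^L \leq h^L_{{\rm{Leb}}, NZ}$ on $d(NZ)$. The latter holds by Proposition~\ref{prop_leb_env_id}, which identifies $h^L_{{\rm{Leb}}, NZ}$ with the psh envelope $h^L_{d(NZ)}$, together with Lebesgue's density theorem showing that $d(NZ)$ differs from $NZ$ by a null set. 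This uniform bound pins the negative part of the logarithmic relative spectrum to a bounded interval, so its contribution to any $d_p$ is $O(1)$ and therefore $o(k)$.

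\emph{Hard direction and the $\infty$-equivalence case.} Via (\ref{rem_linf_subs}) applied to $E = d(NZ)$ and Proposition~\ref{prop_regul_cont_equal_extr2} (using $\overline{d(NZ)} = K$), one obtains the key identity ${\textrm{Ban}}_k^{\infty}(X, h^L_{{\rm{Leb}}, NZ}) = {\textrm{Ban}}_k^{\infty}(K, h^L)$. Approximating $h^L_{{\rm{Leb}}, NZ}$ from below by smooth positive metrics $h^L_m \nearrow h^L_{{\rm{Leb}}, NZ}$ via Demailly regularization (applicable since this envelope has bounded psh potential by Proposition~\ref{prop_leb_env_id}), and applying the Bernstein-Markov property of the Lebesgue measure on $A_{\alpha} := \{f \geq \alpha\}$ -- a non-pluripolar set of positive Lebesgue measure for any small $\alpha > 0$ -- with respect to each smooth positive $h^L_m$, one obtains a sub-exponential two-sided comparison of ${\textrm{Hilb}}_k(h^L_m, \chi)$ with ${\textrm{Hilb}}_k(h^L, f\chi)$. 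Passing $m \to \infty$ via monotone convergence of the Hilbert norms and the quasi-triangle inequality for $d_p$ from \cite{FinNarSim} then yields the $p$-equivalence for every finite $p$. The $\infty$-equivalence under continuity of $h^L_{{\rm{Leb}}, NZ}$ is immediate from Proposition~\ref{prop_plurireg} (items a $\Leftrightarrow$ e, with $\mu$ a normalization of the Lebesgue measure on $NZ$), which gives pluriregularity of $(K, h^L)$ together with the determining property; Corollary~\ref{thm_equiv_1} and the identity $h^L_K = h^L_{{\rm{Leb}}, NZ}$ from item d) then finish this case.

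\emph{Smooth positive lower bound, and the main obstacle.} For the last assertion I would take $h^L_0 := c \cdot h^L_*$ for a smooth positive reference $h^L_*$ and a sufficiently small $c > 0$. Bernstein-Markov for Lebesgue on $A_\alpha$ with respect to the smooth positive metric $h^L_*$ gives ${\textrm{Hilb}}_k(h^L_*, \chi) \leq e^{o(k)} {\textrm{Hilb}}_k(h^L_*, 1_{A_\alpha} \chi)$; combining with the pointwise bound $h^L_* \leq (1/E) h^L$ (where $E := \inf_X h^L/h^L_* > 0$) and with $f \geq \alpha$ on $A_\alpha$ yields ${\textrm{Hilb}}_k(h^L_0, \chi) = c^k {\textrm{Hilb}}_k(h^L_*, \chi) \leq (c/E)^k \alpha^{-1} e^{o(k)} {\textrm{Hilb}}_k(h^L, f\chi)$. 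Choosing $c < E$ absorbs the sub-exponential factor for $k$ large, and the finitely many small $k$ are handled by a further shrinking of $c$. The main obstacle is the limit $m \to \infty$ in the hard direction of the $p$-equivalence: since $h^L_{{\rm{Leb}}, NZ}$ need not be continuous, only the $L^p$-distance between the potentials $\phi_m$ and $\phi_{{\rm{Leb}}, NZ}$ vanishes, not the $L^{\infty}$-distance; this is precisely why only $p$-equivalence (for finite $p$) survives in the general case, and the proof must quantify this loss carefully.
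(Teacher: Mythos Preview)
Your easy direction and your treatment of the $p=+\infty$ case (via Proposition~\ref{prop_plurireg} and Corollary~\ref{thm_equiv_1}) are essentially correct and match the paper's argument.

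There is, however, a genuine gap in the hard direction and in the proof of the last assertion. You invoke ``the Bernstein--Markov property of the Lebesgue measure on $A_\alpha = \{f \geq \alpha\}$ with respect to a smooth positive metric $h^L_*$'' to conclude that ${\textrm{Hilb}}_k(h^L_*, \chi) \leq e^{o(k)} {\textrm{Hilb}}_k(h^L_*, 1_{A_\alpha} \chi)$. This inequality is false in general. Take $X = \mathbb{P}^1$, $h^L_*$ the Fubini--Study metric, and $A_\alpha$ a small disc: the peak sections concentrated far from $A_\alpha$ have unit $L^2$-norm on $X$ but exponentially small $L^2$-norm on $A_\alpha$. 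More conceptually, Bernstein--Markov for $(A_\alpha, h^L_*)$ would compare ${\textrm{Hilb}}_k(h^L_*, 1_{A_\alpha}\chi)$ with ${\textrm{Ban}}_k^\infty(\overline{A_\alpha}, h^L_*)$, not with ${\textrm{Hilb}}_k(h^L_*, \chi)$; and by (\ref{rem_linf_subs}) the sup-norm over $\overline{A_\alpha}$ coincides with ${\textrm{Ban}}_k^\infty(h^L_{*, \overline{A_\alpha}})$, which is governed by $h^L_{*, \overline{A_\alpha}}$, not by $h^L_*$. Since $A_\alpha$ is an \emph{arbitrary} measurable subset of $NZ$ (nothing in the hypotheses forces $\overline{A_\alpha}$ to be pluriregular, or the Lebesgue measure on it to be determining, or $h^L_{*, \overline{A_\alpha}}$ to equal $h^L_*$), the Bernstein--Markov input you need is simply unavailable; the paper's own Proposition~\ref{thm_bbwn_determining_new} and the Siciak counterexamples cited just before it show these properties can fail even for nice compact sets and smooth positive metrics.

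What the paper actually does is bypass Bernstein--Markov on subsets entirely. The key replacement is Theorem~\ref{thm_asympt_bm}: for \emph{any} continuous $h^L_0$ with $h^L_0 \leq h^L_{{\rm{Leb}}, NZ}$, one has ${\textrm{Hilb}}_k(h^L, f\chi) \geq C e^{-\epsilon k}\, {\textrm{Ban}}_k^\infty(X, h^L_0)$. This is proved by a Dini/compactness argument on the space $\mathcal{T}(X, \omega)$ of positive closed currents, using the functionals $F_p(\psi) = \frac{1}{p}\log \int e^{p(\psi - \phi)} d\mu$ and the identity (\ref{eq_esssup_phi_mu_ident}) coming from the non-negligible envelope; no Bernstein--Markov on any proper subset is invoked. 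Once you have Theorem~\ref{thm_asympt_bm}, the last assertion is immediate (pick any smooth positive $h^L_0$ below $h^L_{{\rm{Leb}}, NZ}$), and the $p$-equivalence follows by sandwiching ${\textrm{Hilb}}_k(h^L, f\chi)$ between $C\cdot{\textrm{Hilb}}_k(h^L_{{\rm{Leb}}, NZ}, \chi)$ and $Ce^{-\epsilon k}{\textrm{Hilb}}_k(h^L_i, \chi)$ for Demailly approximants $h^L_i \nearrow h^L_{{\rm{Leb}}, NZ}$, applying the Lidskii inequality as in (\ref{eq_dp_hilb_lebenv}), and then letting $i \to \infty$ via (\ref{eq_berndt}) and (\ref{eq_dist_conv}). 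Your outline of the limit $m \to \infty$ is the right shape, but it needs Theorem~\ref{thm_asympt_bm} as its engine rather than an unproven Bernstein--Markov claim.
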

	\end{sloppypar}
	\par 
	Let us now explain the relation between Theorem \ref{thm_equiv_2} and the so-called weak Bernstein-Markov condition from \cite[p. 8]{BerBoucNys}.
	For this, on a finitely-dimensional Hermitian vector space $(V, H)$, endowed with a norm $N_0$ on $V$, we denote by ${\rm{vol}} (N)$ the volume of the unit ball of $N$ (calculated with respect to the volume element associated with $H$).
	Note that while ${\rm{vol}} (N)$ clearly depends on the choice of $H$, if we fix another norm $N_1$ on $V$, the change of variables formula implies that the difference $\log {\rm{vol}} (N_0) - \log {\rm{vol}} (N_1)$ is independent of the choice of $H$.
	\par 
	A positive non-pluripolar measure $\mu$ supported on a compact subset $K \subset X$ is said to be \textit{weakly Bernstein-Markov} with respect to $(K, h^L)$, cf. \cite[\S 3.2]{BermanBouckBalls}, if
	\begin{equation}\label{eq_weak_bm}
		\lim_{k \to \infty}
		\frac{\big|
		\log {\rm{vol}} ({\textrm{Hilb}}_k(h^L, \mu)) - \log {\rm{vol}} ({\textrm{Ban}}_k^{\infty}(K, h^L))
		\big|}{k \dim H^0(X, L^{\otimes k})}
		=
		0.
	\end{equation}
	It is straightforward that the Bernstein-Markov property implies the weak Bernstein-Markov property, as their names suggest. 
	To clarify the connection between the weak Bernstein-Markov property and Theorem \ref{thm_equiv_2}, we first establish the following result.
	\begin{prop}\label{prop_d_p_and_vol}
		For an arbitrary sequence of norms $N = \oplus_{k = 0}^{\infty} N_k$ and $N' = \oplus_{k = 0}^{\infty} N_k'$ on $R(X, L)$, we have $a) \Rightarrow b)$ in the following statements
		\begin{enumerate}[a)]
			\item We have $N \sim_1 N'$.
			\item We have \begin{equation}
				\lim_{k \to \infty}
		\frac{\big|
		\log {\rm{vol}} (N_k) - \log {\rm{vol}} (N_k')
		\big|}{k \cdot \dim H^0(X, L^{\otimes k})}
		=
		0.
			\end{equation}
		\end{enumerate}
		If we have $N_k \geq N_k'$ for any $k \in \nat$, then we also have $b) \Rightarrow a)$. 
		If there is $c > 0$, so that for any $k \in \nat^*$, we have $\exp(-c k) \cdot N_k' \leq N_k \leq \exp(c k) \cdot N_k'$, then the condition $a)$ above is equivalent to
		\begin{enumerate}[a)]
			\setcounter{enumi}{2}
			\item We have $N \sim_p N'$ for any $p \in [1, +\infty[$.
		\end{enumerate}
	\end{prop}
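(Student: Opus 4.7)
The plan is to reduce the problem to the Hermitian case, where the logarithmic relative spectrum and the logarithm of the volume are linearly related, and then transfer the estimates back to general norms via John's ellipsoid theorem. Concretely, for Hermitian norms $H_1, H_2$ on a complex vector space $V$ of dimension $r$, simultaneous diagonalization identifies $\lambda_j(H_1, H_2)$, up to a universal factor, with the logarithms of the eigenvalues of $H_2$ relative to $H_1$. This yields an exact identity of the form
\begin{equation*}
\log {\rm vol}(H_1) - \log {\rm vol}(H_2) = C \sum_{j=1}^{r} \lambda_j(H_1, H_2)
\end{equation*}
for a fixed $C > 0$. For arbitrary norms $N_1, N_2$ on $V$, John's theorem provides Hermitian norms $\tilde N_i$ with $\tilde N_i \leq N_i \leq \sqrt{r}\, \tilde N_i$, which implies $|\lambda_j(N_1, N_2) - \lambda_j(\tilde N_1, \tilde N_2)| \leq \tfrac{1}{2}\log r$ and $|\log {\rm vol}(N_i) - \log {\rm vol}(\tilde N_i)| = O(r \log r)$. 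Combining with the Hermitian identity yields
\begin{equation*}
\Big| C \sum_{j=1}^{r} \lambda_j(N_1, N_2) - (\log {\rm vol}(N_1) - \log {\rm vol}(N_2)) \Big| = O(r \log r).
\end{equation*}

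For the implication $a) \Rightarrow b)$, I would use the triangle inequality to bound $|\sum_j \lambda_j(N_k, N_k')| \leq r_k \cdot d_1(N_k, N_k')$, where $r_k := \dim H^0(X, L^{\otimes k})$, and combine with the estimate above. Since $L$ is ample, $r_k$ grows polynomially in $k$, so $\log r_k / k \to 0$ and the error term is absorbed after dividing by $k r_k$. For the converse $b) \Rightarrow a)$ under the assumption $N_k \geq N_k'$, the monotonicity forces $\lambda_j(N_k, N_k') \leq 0$ for every $j$ directly from the min-max definition of $\lambda_j$; hence $\sum_j |\lambda_j| = -\sum_j \lambda_j$, and the volume bound from $b)$ controls $r_k \cdot d_1(N_k, N_k')$ after incurring the same $O(r_k \log r_k)$ error.

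For the equivalence $a) \Leftrightarrow c)$ under the uniform sandwich $\exp(-ck) N_k' \leq N_k \leq \exp(ck) N_k'$, only $a) \Rightarrow c)$ requires attention, as specializing $c)$ to $p=1$ gives $a)$. The sandwich yields $|\lambda_j(N_k, N_k')| \leq ck$ for every $j$, and the elementary pointwise inequality $|\lambda_j|^p \leq (ck)^{p-1}|\lambda_j|$ gives $d_p(N_k, N_k')^p \leq (ck)^{p-1} d_1(N_k, N_k')$. Dividing by $k^p$ produces $(d_p/k)^p \leq c^{p-1} (d_1/k)$, from which $d_p/k \to 0$ follows whenever $d_1/k \to 0$.

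The main obstacle is the volume-spectrum comparison for non-Hermitian norms in the first step: the clean identity is only available in the Hermitian case, and the John's ellipsoid detour unavoidably introduces an $O(r \log r)$ error. One must verify that this error is overwhelmed by the factor $k \cdot r_k$ appearing in $b)$, which is exactly what the polynomial growth of $r_k$ (a consequence of the ampleness of $L$ via Hilbert--Samuel) guarantees through $\log r_k / k \to 0$.
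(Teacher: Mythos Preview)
Your proposal is correct and follows essentially the same route as the paper: reduce to Hermitian norms via John's ellipsoid theorem, use the exact identity between $\sum_j \lambda_j$ and the log-volume difference in the Hermitian case, and then handle each implication exactly as you describe (triangle inequality for $a)\Rightarrow b)$, constant sign of the $\lambda_j$ under monotonicity for $b)\Rightarrow a)$, and the interpolation $|\lambda_j|^p \leq (ck)^{p-1}|\lambda_j|$ for $a)\Rightarrow c)$). The only stylistic difference is that the paper first replaces $N_k, N_k'$ by their John ellipsoids and then works entirely in the Hermitian setting, whereas you carry the $O(r_k\log r_k)$ error term throughout; both are controlled by $\log r_k / k \to 0$, so this distinction is immaterial.
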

	\begin{proof}
		Before all, let us explain that it suffices to consider Hermitian norms in the above statements.
		To see this, remark that John ellipsoid theorem, cf. \cite[\S 3]{PisierBook}, says that for any normed vector space $(V, N_V)$, there is a Hermitian norm $H_V$ on $V$, verifying 
		\begin{equation}\label{eq_john_ellips}
			H_V \leq N_V \leq \sqrt{\dim V} \cdot H_V.
		\end{equation}
		While for Hermitian norms, $d_p$ is a distance (and in particular satisfies the triangle inequality), for arbitrary norms $N_0, N_1, N_2$ on $V$, a weak version of this property holds, as discussed in \cite[(3.9)]{FinTits}:
		\begin{equation}\label{eq_tr_weak}
			d_p(N_0, N_2) \leq d_p(N_0, N_1) + d_p(N_1, N_2) + \log \dim V.
		\end{equation}
		\par 
		If we now denote by $H = \oplus_{k = 0}^{\infty} H_k$ and $H' = \oplus_{k = 0}^{\infty} H_k'$ the Hermitian norms on $R(X, L)$ associated by the John ellipsoid theorem with $N = \oplus_{k = 0}^{\infty} N_k$ and $N' = \oplus_{k = 0}^{\infty} N_k'$ respectively, then from (\ref{eq_tr_weak}), we immediately deduce
		\begin{equation}
			\Big| d_p(N_k, N_k') - d_p(H_k, H_k') \Big| \leq 3 \log n_k,
		\end{equation}
		where $n_k := \dim H^0(X, L^{\otimes k})$.
		Similarly, we have
		\begin{equation}
		\big|
			\log {\rm{vol}} (N_k) - \log {\rm{vol}} (H_k)
			\big|
			\leq
			2 \log n_k,
			\qquad \big|
			\log {\rm{vol}} (N_k') - \log {\rm{vol}} (H_k')
			\big|
			\leq
			2 \log n_k.
		\end{equation}
		As $\lim_{k \to \infty} \frac{1}{k} \log n_k = 0$, we see that in both conditions $a)$ and $b)$, we can safely change $N_k$ to $H_k$ and $N_k'$ to $H_k'$.
		Moreover, it is easy to see that if there is $c > 0$, so that for any $k \in \nat^*$, we have $\exp(-c k) \cdot N_k' \leq N_k \leq \exp(c k) \cdot N_k'$, then there is $c > 0$, so that for any $k \in \nat^*$, we have $\exp(-c k) \cdot H_k' \leq H_k \leq \exp(c k) \cdot H_k'$; and if $N_k \geq N_k'$, then we could easily arrange $H_k \geq H_k'$ by multiplying $H_k$ by $n_k$, which again would not affect neither of the conditions $a)$ and $b)$.
		Hence, we may assume without loss of generality that $N_k$ and $N_k'$ are Hermitian.
		\par 
		It is then immediate that for the logarithmic relative spectrum, $\lambda_i^k$, $i = 1, \ldots, n_k$, of $N_k$ with respect to $N_k'$, we have
		\begin{equation}\label{eq_rel_vol_spec}
			\log {\rm{vol}} (N_k') - \log {\rm{vol}} (N_k)
			=
			\sum_{i=1}^{n_k} \lambda_i^k.
		\end{equation}
		By this and (\ref{eq_dp_defn_norms}), we see immediately that $a) \Rightarrow b)$.
		We also see that as if $N_k \geq N_k'$, then $\lambda_i$ are all positive, and we have
		\begin{equation}\label{eq_vol_d1}
			\frac{\log {\rm{vol}} (N_k') - \log {\rm{vol}} (N_k)}{n_k}
			=
			d_1(N_k, N_k'),
		\end{equation}
		which clearly shows that $b) \Rightarrow a)$.
		\par 
		The condition $\exp(-c k) \cdot N_k' \leq N_k \leq \exp(c k) \cdot N_k'$ assures that $|\lambda_i^k| \leq ck$ for any $k \in \nat^*$, $i = 1, \ldots, n_k$.
		It is then immediate from (\ref{eq_dp_defn_norms}) that $a) \Rightarrow c)$.
		The inverse implication is trivial.
	\end{proof}
	
	Compare the following result with Proposition \ref{prop_determ_crit}.
	\begin{prop}\label{thm_bbwn_determining_new}
		For a compact subset $K \subset X$ and a continuous metric $h^L$ on $L$, the following statements are equivalent:
		\begin{enumerate}[a)]
			\item We have $h^L_{{\rm{Leb}}, K} = h^L_{K *}$.
			\item The Lebesgue measure on $K$ is weakly Bernstein-Markov with respect to $(K, h^L)$.
		\end{enumerate}
		If, moreover, $K$ is pluriregular, then the following statements are equivalent:
		\begin{enumerate}[a)]
			\setcounter{enumi}{2}
			\item We have $h^L_{{\rm{Leb}}, K} = h^L_K$.
			\item The Lebesgue measure on $K$ is Bernstein-Markov with respect to $(K, h^L)$.
		\end{enumerate}
	\end{prop}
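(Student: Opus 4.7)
The plan is to reduce each of the two claimed equivalences to an identity of Monge-Ampère energies, exploiting the volume asymptotics of balls of sections from Section \ref{sect_growth_balls}. Write $\mu$ for the Lebesgue measure on $K$. Since $\int_K |s|^2_{h^{L^{\otimes k}}} d\mu \leq \mu(K) \cdot \sup_K |s|^2_{h^{L^{\otimes k}}}$, there is $c > 0$ such that ${\textrm{Ban}}_k^{\infty}(K, h^L) \geq c \cdot {\textrm{Hilb}}_k(h^L, \mu)$ for all $k$; this constant is absorbed by the subexponential slack inherent in both $\sim_1$ and the weak Bernstein-Markov condition. Proposition \ref{prop_d_p_and_vol} in the direction $a) \Leftrightarrow b)$ — which applies because one norm dominates the other — then identifies $(b)$ with the $\sim_1$-equivalence $\oplus_k {\textrm{Ban}}_k^{\infty}(K, h^L) \sim_1 \oplus_k {\textrm{Hilb}}_k(h^L, \mu)$. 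Next, apply Theorem \ref{thm_equiv_2} with $f := 1_K$ and any auxiliary smooth positive volume form $\chi$ to obtain $\oplus_k {\textrm{Hilb}}_k(h^L, 1_K \cdot \chi) \sim_1 \oplus_k {\textrm{Hilb}}_k(h^L_{{\rm{Leb}}, K}, \chi)$, and use $(\ref{rem_linf_subs})$ to identify ${\textrm{Ban}}_k^{\infty}(K, h^L) = {\textrm{Ban}}_k^{\infty}(h^L_K)$. Because $\sim_1$ is an equivalence relation, condition $(b)$ reduces to
\[
\oplus_k {\textrm{Ban}}_k^{\infty}(h^L_K) \; \sim_1 \; \oplus_k {\textrm{Hilb}}_k(h^L_{{\rm{Leb}}, K}, \chi).
\]

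The heart of the argument is then to translate this $\sim_1$-equivalence into an identity of Monge-Ampère energies. By Proposition \ref{prop_leb_env_id}, $h^L_{{\rm{Leb}}, K}$ admits a bounded psh potential $\phi_{{\rm{Leb}}, K}$. The envelope $h^L_K$ has a potential $\phi_K$ whose upper semi-continuous regularization $\phi_K^*$ — the potential of $h^L_{K *}$ — differs from $\phi_K$ only on a pluripolar set and is $\omega$-psh. The volume asymptotics from Section \ref{sect_growth_balls} (extending Berman-Boucksom \cite{BermanBouckBalls}) yield that for both of the above graded norms, $\frac{1}{k n_k} \log {\rm{vol}}(N_k)$ converges to an expression of the form $- \mathcal{E}(\phi) + C$, where $\mathcal{E}$ is the Monge-Ampère energy and $C$ is a constant depending only on the reference data common to both normalizations. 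Invoking the volume criterion of Proposition \ref{prop_d_p_and_vol}, the displayed $\sim_1$-equivalence is thus equivalent to $\mathcal{E}(\phi_K^*) = \mathcal{E}(\phi_{{\rm{Leb}}, K})$. Since the a.e.-constraint defining $\phi_{{\rm{Leb}}, K}$ is weaker than the pointwise one defining $\phi_K$, the supremum formulations from $(\ref{eq_phi_mu_form1})$ give $\phi_K^* \leq \phi_{{\rm{Leb}}, K}$, and the strict monotonicity of $\mathcal{E}$ on the cone of $\omega$-psh functions with bounded potentials then forces $\phi_K^* = \phi_{{\rm{Leb}}, K}$, which is exactly condition $(a)$.

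For the second pair of equivalences, assume $(K, h^L)$ is pluriregular, so that $h^L_K = h^L_{K *}$ is continuous and conditions $(a)$ and $(c)$ coincide. For $(d) \Rightarrow (c)$, note that classical Bernstein-Markov $(d)$ trivially implies the weak version $(b)$, hence $(a) = (c)$ by the previous part. Conversely, $(c)$ combined with pluriregularity forces, via Proposition \ref{prop_plurireg}, that the Lebesgue measure on $K$ is determining for $(K, h^L)$. Theorem \ref{thm_bbwn_determining} then yields that this measure is Bernstein-Markov with respect to psh weights, and substituting $\psi := \frac{1}{k} \log |s|_{h^{L^{\otimes k}}_*}$ and $p := 2k$ into $(\ref{bm_psh_weights})$ (as noted in the paper immediately after that display) recovers the classical Bernstein-Markov property $(d)$.

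The principal obstacle is the identification of the $\sim_1$-equivalence of the two graded norms with the Monge-Ampère energy equality. It rests on two non-trivial ingredients: the asymptotic volume formula from Section \ref{sect_growth_balls} applied to $L^2$-norms associated with metrics of merely bounded psh potential, and the rigidity fact that equality of Monge-Ampère energies among ordered $\omega$-psh functions forces pointwise equality. Once these are in place, the remainder of the proof is a formal reassembly of results already established in Sections \ref{sect_determ}--\ref{sect_bm}.
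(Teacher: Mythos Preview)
Your approach is essentially the same as the paper's, just phrased through Monge--Amp\`ere energy rather than Darvas distance; by Darvas' formula $d_1(h^L_0,h^L_1)=\mathscr{E}(h^L_0)-\mathscr{E}(h^L_1)$ for ordered metrics (equation~(\ref{eq_d1_ener})), your ``strict monotonicity of $\mathscr{E}$'' is exactly the paper's ``$d_p$ separates points''. The second pair $(c)\Leftrightarrow(d)$ is handled identically in both.

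There is, however, a genuine gap in your reduction. You assert that Section~\ref{sect_growth_balls} supplies volume asymptotics of the form $\tfrac{1}{kn_k}\log{\rm vol}(N_k)\to -\mathscr{E}(\phi)+C$ for \emph{both} graded norms, in particular for ${\textrm{Ban}}_k^{\infty}(h^L_K)$. But Theorem~\ref{thm_balls} concerns ${\textrm{Hilb}}_k(A,h^L)$, not $L^\infty$-norms, and the Berman--Boucksom result you allude to is stated in the paper only under pluriregularity and a determining hypothesis --- precisely what is absent in the first equivalence. To obtain the needed volume asymptotics for ${\textrm{Ban}}_k^{\infty}(K,h^L)$ without these assumptions one must first establish
\[
\oplus_k {\textrm{Ban}}_k^{\infty}(K,h^L)\ \sim_p\ \oplus_k {\textrm{Hilb}}_k(h^L_{K*},\chi),
\]
which the paper proves as (\ref{eq_ban_hilb_etc}) using that $h^L_K$ is regularizable from above (via (\ref{eq_h_l_k_sic}) and \cite[Corollary 2.21, Proposition 2.18]{FinNarSim}). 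Once you have this, the detour through volumes is redundant: one combines it directly with Theorem~\ref{thm_equiv_2} and (\ref{eq_berndt}) to get $\lim_k \tfrac{1}{k}d_p\big({\textrm{Ban}}_k^{\infty}(K,h^L),{\textrm{Hilb}}_k(K,h^L)\big)=d_p(h^L_{K*},h^L_{{\rm{Leb}},K})$, and concludes since $d_p$ separates points. So the missing ingredient in your argument is exactly the key lemma (\ref{eq_ban_hilb_etc}) that the paper isolates and uses without the energy reformulation.
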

	\begin{rem}\label{rem_totik}
		The analogue of this statement for arbitrary non-pluripolar Borel measures doesn't hold.
		Indeed, V. Totik in \cite[Example 8.1]{BernsteinMarkovSurvey} gave an example of a non-pluripolar Borel measure $\mu$ satisfying Bernstein-Markov property, but which is not determining.
		According to Proposition \ref{prop_determ_crit}, $h^L_{\mu} \neq h^L_{K *}$, and so the analogues of the implications $d) \Rightarrow c)$ and $b) \Rightarrow a)$ do not hold.
		\par 
		Nevertheless, Proposition \ref{prop_plurireg} along with Theorem \ref{thm_bbwn_determining} imply that for an arbitrary non-pluripolar Borel measure, the implication $c) \Rightarrow d)$ holds.
		Also, in Proposition \ref{prop_weak_bm}, we establish that the analogue of $a) \Rightarrow b)$ holds for an arbitrary non-pluripolar Borel measure.
	\end{rem}
	\begin{proof} 
		By the discussion after (\ref{eq_h_l_k_sic}), there is a sequence of continuous metrics with psh potentials which decay towards $h^L_K$ almost everywhere.
		In particular, $h^L_K$ is \textit{regularizable from above} in the notations of \cite{BedfordRegulBelow}, \cite{BouckErik21}, \cite{FinNarSim}.
		Hence, by \cite[Corollary 2.21]{FinNarSim}, the norm $\oplus_{k = 0}^{+\infty} {\textrm{Ban}}_k^{\infty}(h^L_K)$ is $p$-equivalent to $\oplus_{k = 0}^{+\infty} {\textrm{Ban}}_k^{\infty}(h^L_{K *})$ for any $p \in [1, +\infty[$, and by \cite[Proposition 2.18]{FinNarSim}, the norm $\oplus_{k = 0}^{+\infty} {\textrm{Ban}}_k^{\infty}(h^L_{K *})$ is $p$-equivalent to $\oplus_{k = 0}^{+\infty} {\textrm{Hilb}}_k(h^L_{K *}, \chi)$ for any $p \in [1, +\infty[$.
		From all this and (\ref{rem_linf_subs}), we deduce
		\begin{equation}\label{eq_ban_hilb_etc}
			\oplus_{k = 0}^{+ \infty} {\textrm{Hilb}}_k(h^L_{K *}, \chi) \sim_p \oplus_{k = 0}^{+ \infty} {\textrm{Ban}}_k^{\infty}(K, h^L), \quad \text{for any $p \in [1, +\infty[$}.
		\end{equation}
		\par 
		By the result of Darvas-Lu-Rubinstein \cite[Theorem 1.2]{DarvLuRub} (refining previous results of Chen-Sun \cite{ChenSunQuant} and Berndtsson \cite[Theorem 3.3]{BerndtProb}), for any two Hermitian metrics $h^L_0$, $h^L_1$ with bounded psh potentials, we have
		\begin{equation}\label{eq_berndt}
			\lim_{k \to \infty} \frac{1}{k} d_p \Big({\textrm{Hilb}}_k(h^L_0, \chi), {\textrm{Hilb}}_k(h^L_1, \chi) \Big)
			=
			d_p(h^L_0, h^L_1),
		\end{equation}
		where $d_p(h^L_0, h^L_1)$ is the Darvas distance between $h^L_0$ and $h^L_1$, \cite{DarvasFinEnerg}, which is a generalization of Mabuchi distance \cite{Mabuchi}, corresponding to $p = 2$, see Section \ref{sect_mabuch} for more details.
		\par 
		\begin{sloppypar}
		Directly from this, Theorem \ref{thm_equiv_2}, (\ref{eq_ban_hilb_etc}) and (\ref{eq_berndt}), we see 
		\begin{equation}\label{eq_berndt101}
			\lim_{k \to \infty} \frac{1}{k} d_p \Big({\textrm{Ban}}_k^{\infty}(K, h^L), {\textrm{Hilb}}_k(K, h^L) \Big)
			=
			d_p(h^L_{K *}, h^L_{{\rm{Leb}}, K}).
		\end{equation}
		Since $d_p$ separates points, we see that $\oplus_{k = 0}^{+ \infty} {\textrm{Ban}}_k^{\infty}(K, h^L) \sim_p \oplus_{k = 0}^{+ \infty} {\textrm{Hilb}}_k(K, h^L)$ for any $p \in [1, +\infty[$ if and only if $h^L_{K *} = h^L_{{\rm{Leb}}, K}$.
		Note, however, that we trivially have ${\textrm{Ban}}_k^{\infty}(K, h^L) \geq {\textrm{Hilb}}_k(K, h^L)$, and so by Theorem \ref{thm_equiv_2} and Proposition \ref{prop_d_p_and_vol}, we have $\oplus_{k = 0}^{+ \infty} {\textrm{Ban}}_k^{\infty}(K, h^L) \sim_p \oplus_{k = 0}^{+ \infty} {\textrm{Hilb}}_k(K, h^L)$ if and only if the Lebesgue measure on $K$ is weakly Bernstein-Markov with respect to $(K, h^L)$.
		We thus get the equivalence between $a)$ and $b)$.
		\end{sloppypar}
		\par 
		We will now assume that $K$ is pluriregular.
		Then $h^L_{K *} = h^L_K$, and so $b \Rightarrow a)$, we conclude immediately that $d) \Rightarrow c)$, as $d) \Rightarrow b)$.
		The inverse implication $c) \Rightarrow d)$ follows from Proposition \ref{prop_plurireg} and Theorem \ref{thm_bbwn_determining}.
	\end{proof}
	As one application, we give an answer to \cite[Question 1.11]{BerBoucNys} for the Lebegue measures.
	\begin{cor}
		The Lebesgue measure on a Lebesgue non-negligible compact subset $K \subset X$ is weakly Bernstein-Markov on $(K, h^L)$ for a continuous metric $h^L$ if and only if it is Bernstein-Markov with respect to psh weights on $(K, h^L)$.
	\end{cor}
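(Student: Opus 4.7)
The corollary will follow by combining the equivalences developed in this section with Theorem \ref{thm_bbwn_determining}. One direction is already implicit in the text: if Lebesgue on $K$ is Bernstein-Markov with respect to psh weights, then substituting $\psi = \frac{1}{k} \log|s|_{h^{L^{\otimes k}}_*}$ and $p = 2k$ in (\ref{bm_psh_weights}) yields the classical Bernstein-Markov inequality (\ref{eq_bm_clas}); and classical Bernstein-Markov implies weak Bernstein-Markov by comparing $L^{\infty}$ and $L^2$ unit-ball volumes, as noted just after (\ref{eq_weak_bm}).

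For the converse, suppose the Lebesgue measure on $K$ is weakly Bernstein-Markov. By Proposition \ref{thm_bbwn_determining_new} (equivalence $a \Leftrightarrow b$), this gives $h^L_{{\rm{Leb}}, K} = h^L_{K*}$. Since $K$ is Lebesgue non-negligible, the Lebesgue measure on $K$ is non-pluripolar, and Proposition \ref{prop_determ_crit} translates the identity above into the statement that the Lebesgue measure is determining for $(K, h^L)$. By Theorem \ref{thm_bbwn_determining}, Bernstein-Markov with respect to psh weights is characterized by the conjunction of the determining property \emph{and} pluriregularity of $(K, h^L)$, so the corollary reduces to deducing pluriregularity of $(K, h^L)$ from the determining property of the Lebesgue measure.

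The main obstacle is precisely this last step. My approach would combine Proposition \ref{prop_regul_cont_equal_extr}, which gives $Q(h^L_{{\rm{Leb}}, K}) = h^L_K$, with the identity $h^L_{{\rm{Leb}}, K} = h^L_{K*}$ coming from the determining property, to obtain $Q(h^L_{K*}) = h^L_K$. To close the argument, one needs the auxiliary identity $Q(h^L_{K*}) = h^L_{K*}$; combined with the previous, it would yield $h^L_K = h^L_{K*}$, which is exactly the pluriregularity of $(K, h^L)$. The delicate point is that $Q$ is defined as an infimum of continuous psh metrics from above, whereas Demailly's regularization theorem produces decreasing approximations of psh potentials, i.e., increasing metric approximations from below; a possible workaround is to use the Siciak-type description (\ref{eq_h_l_k_sic}) of $h^L_K$ as an infimum of continuous functions built from holomorphic sections, together with the determining property, to directly identify $h^L_K$ with $h^L_{K*}$ on $K$. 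Once pluriregularity is established, Theorem \ref{thm_bbwn_determining} supplies the desired Bernstein-Markov property with respect to psh weights.
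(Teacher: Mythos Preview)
Your overall strategy matches the paper's one-line proof, which simply cites Theorem~\ref{thm_bbwn_determining} together with Propositions~\ref{prop_plurireg} and~\ref{thm_bbwn_determining_new}; and you have correctly isolated the only nontrivial step hidden behind that citation: once weak Bernstein--Markov gives $h^L_{{\rm Leb},K}=h^L_{K*}$ (equivalently, via Proposition~\ref{prop_determ_crit}, that the Lebesgue measure is determining), one still needs pluriregularity of $(K,h^L)$ before Theorem~\ref{thm_bbwn_determining} can be applied.

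Your proposed resolution of this step via the regular envelope $Q$, however, is circular. The identity $Q(h^L_{K*})=h^L_K$ holds \emph{unconditionally} for every compact $K$: if $h^L_0$ is continuous with psh potential and $h^L_0\ge h^L_{K*}$, then since $h^L_{K*}=h^L_K$ off a pluripolar (hence nowhere dense) set and $h^L_K\ge h^L$ on $K$, continuity of $h^L_0$ and $h^L$ forces $h^L_0\ge h^L$ on all of $K$, whence $h^L_0\ge h^L_K$. So combining $Q(h^L_{{\rm Leb},K})=h^L_K$ with the hypothesis $h^L_{{\rm Leb},K}=h^L_{K*}$ yields only this tautology. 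Worse, the auxiliary identity you then invoke, $Q(h^L_{K*})=h^L_{K*}$, is---given $Q(h^L_{K*})=h^L_K$---literally the statement $h^L_K=h^L_{K*}$, which is pluriregularity itself. The suggested workaround via the Siciak description is too vague to repair this. Note, moreover, that the step cannot be closed without using the paper's standing convention (built into the Bernstein--Markov definitions) that $K$ is the \emph{support} of the measure: for $K=\overline D\cup\{p\}$ with $D$ a smooth disc in $\mathbb P^1$ and $p$ an isolated point, one checks $h^L_{{\rm Leb},K}=h^L_{\overline D}=h^L_{K*}$ so weak Bernstein--Markov holds, yet $(K,h^L)$ fails to be pluriregular at $p$ by Proposition~\ref{prop_contact_set_closed}, so Bernstein--Markov for psh weights fails by Theorem~\ref{thm_bbwn_determining}.
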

	\begin{rem}
		As observed in \cite[\S 8]{BernsteinMarkovSurvey}, the analogue of this statement for an arbitrary non-pluripolar Borel measure doesn't hold.
	\end{rem}
	\begin{proof}
		Follows from Theorem \ref{thm_bbwn_determining} and Propositions \ref{prop_plurireg}, \ref{thm_bbwn_determining_new}.
	\end{proof}
	
	\section{From Toeplitz to Transfer operators and back}\label{sect_transf}
	The main goal of this section is to establish that the logarithm of a Toeplitz operator is asymptotically Toeplitz, i.e. to prove Theorem \ref{thm_log2}.
	The major idea behind the proof is to first interpret the logarithm of a Toeplitz operator as a transfer map between $L^2$-norms associated with different weights, and then to proceed by the comparison of the $L^2$-norms associated with weights to $L^2$-norms associated with smooth volume forms, the transfer operators between which has been studied recently in \cite{FinSubmToepl}. 
	\par 
	Let us recall the definition of the transfer map first.
	Let $V$ be a complex vector space and $H_0, H_1$ be two Hermitian norms on $V$. The \textit{transfer map}, $T \in {\rm{End}}(V)$, between $H_0, H_1$, is the Hermitian operator (with respect to both $H_0, H_1$), defined so that the Hermitian products $\langle \cdot, \cdot \rangle_{H_0}, \langle \cdot, \cdot \rangle_{H_1}$ induced by $H_0$ and $H_1$, are related as $\langle \cdot, \cdot \rangle_{H_1} = \langle \exp(-T) \cdot, \cdot \rangle_{H_0}$.
	\par 
	A crucial point is that the asymptotic class of the transfer map doesn't depend on the equivalence class of the norm.
	More specifically, consider a sequence of Hermitian norms $H_k^i$, $i = 0, 1, 2$ on $H^0(X, L^{\otimes k})$, $k \in \nat$. 
	We denote by $T_k^j \in {\rm{End}}(H^0(X, L^{\otimes k}))$, $j = 1, 2$, the transfer maps between $H_k^0$ and $H_k^j$, $k \in \nat$.
	The following proposition will be crucial in our approach.
	\begin{prop}\label{prop_equiv_tranf}
		Assume that for some $p \in [1, +\infty]$, we have $\oplus_{k = 0}^{\infty} H_k^1 \sim_p \oplus_{k = 0}^{\infty} H_k^2$.
		Then for any $\epsilon > 0$, there is $k_0 \in \nat$, such that for any $k \geq k_0$, we have $
			\| 
				T_k^1 - T_k^2
			\|_p
			\leq
			\epsilon k$.
	\end{prop}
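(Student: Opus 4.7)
The plan is to reduce the desired bound on $T_k^1 - T_k^2$ to the classical \emph{exponential metric increasing} (EMI) inequality on the cone of positive Hermitian operators.

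First, for $j = 1, 2$, I would introduce the positive $H_k^0$-self-adjoint operator $A_k^j$ characterized by $\langle u, v\rangle_{H_k^j} = \langle A_k^j u, v\rangle_{H_k^0}$, so that by definition of the transfer map $T_k^j = -\log A_k^j$. Letting $S_k$ be the transfer map between $H_k^1$ and $H_k^2$, the same definition gives $\exp(-S_k) = (A_k^1)^{-1} A_k^2$, and a direct min-max computation applied to (\ref{eq_log_rel_spec}) shows that the eigenvalues of $S_k$ are precisely $-2\lambda_i(H_k^1, H_k^2)$. Hence $\|S_k\|_p = 2\, d_p(H_k^1, H_k^2)$, and the hypothesis $\oplus_{k = 0}^{\infty} H_k^1 \sim_p \oplus_{k = 0}^{\infty} H_k^2$ translates into $\tfrac{1}{k}\|S_k\|_p \to 0$ as $k \to \infty$.

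The crux of the argument is then the EMI inequality: for any positive Hermitian operators $A, B$ on a finite-dimensional Hilbert space and any unitarily invariant norm (in particular the normalized Schatten $p$-norm with $p \in [1, +\infty]$),
\begin{equation*}
\big\| \log A - \log B \big\|_p \leq \big\| \log \big( A^{1/2} B^{-1} A^{1/2} \big) \big\|_p.
\end{equation*}
This is a classical result in matrix analysis, following from Araki's log-majorization and the Ky Fan dominance principle; see, for example, Bhatia, \emph{Positive Definite Matrices}. Applied on the Hilbert space $(H^0(X, L^{\otimes k}), H_k^0)$ with $A = A_k^1$ and $B = A_k^2$, it yields
\begin{equation*}
\|T_k^1 - T_k^2\|_p = \| \log A_k^1 - \log A_k^2 \|_p \leq \big\| \log \big( (A_k^1)^{1/2} (A_k^2)^{-1} (A_k^1)^{1/2} \big) \big\|_p.
\end{equation*}
Since $(A_k^1)^{1/2} (A_k^2)^{-1} (A_k^1)^{1/2}$ is similar to $(A_k^2)^{-1} A_k^1 = \exp(S_k)$, its logarithm has the same real spectrum as $S_k$, so the right-hand side equals $\|S_k\|_p$. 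Combining the two steps gives $\|T_k^1 - T_k^2\|_p \leq 2\, d_p(H_k^1, H_k^2)$, which is $\leq \epsilon k$ for $k$ large enough, as required.

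The main obstacle is the correct invocation of the EMI inequality; everything else is routine bookkeeping. A minor subtlety is that the $T_k^j$ and $A_k^j$ are $H_k^0$-self-adjoint while $S_k$ is $H_k^1$-self-adjoint, so I would compute all Schatten norms with respect to the base inner product $H_k^0$. This is harmless, since both $T_k^1 - T_k^2$ and $\log((A_k^1)^{1/2}(A_k^2)^{-1}(A_k^1)^{1/2})$ are $H_k^0$-self-adjoint and their normalized Schatten $p$-norms reduce to the normalized $\ell^p$ norms of their real spectra, so the choice of reference inner product drops out.
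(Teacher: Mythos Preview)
Your proof is correct and follows essentially the same route as the paper: both invoke the exponential metric increasing inequality from Bhatia's book to bound $\|T_k^1 - T_k^2\|_p$ by the Riemannian distance between $H_k^1$ and $H_k^2$ on the cone of positive forms, which is $2\,d_p(H_k^1, H_k^2)$, and then the hypothesis $\sim_p$ finishes the job. The paper's proof is just the one-line citation of EMI (plus a limit $p \to +\infty$ for the operator-norm case), whereas you spell out the intermediate operators $A_k^j$, $S_k$ and the similarity argument explicitly; your extra factor of $2$ (the paper writes $d_p$ rather than $2d_p$) is harmless for the conclusion.
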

	\begin{proof}
		Let us first establish the statement for $p \in [1, +\infty[$.
		Recall that the exponential metric increasing property, as stated in \cite[Theorem 6.1.4 and (6.36)]{BhatiaBook}, asserts that for any $p \in [1, +\infty[$,
		\begin{equation}\label{eq_emi}
			\big\| 
				T_k^1 - T_k^2
			\big\|_p
			\leq
			d_p(H_k^1, H_k^2),
		\end{equation}
		which immediately implies Proposition \ref{prop_equiv_tranf} for $p \in [1, +\infty[$.
		To establish Proposition \ref{prop_equiv_tranf} for $p = +\infty$, it then suffices to take the limit $p \to +\infty$ in (\ref{eq_emi}).
	\end{proof}
	\par 
	To make use of the above statement, we need to be able to calculate the transfer maps between various $L^2$-norms.
	We will fix a positive smooth metric $h^L_0$ on $L$, and a metric $h^L_1$ on $L$ with bounded psh potential.
	Fix also two smooth volume forms $\chi_i$, $i = 0, 1$, on $X$.
	We denote by $T_k(h^L_0, h^L_1) \in {\rm{End}}(H^0(X, L^{\otimes k}))$ the transfer map between ${\textrm{Hilb}}_k(h^L_0, \chi_0)$ and ${\textrm{Hilb}}_k(h^L_1, \chi_1)$.
	Let $h^L_t$, $t \in [0, 1]$, be the Mabuchi geodesic between $h^L_0$ and $h^L_1$.
	We define $\phi(h^L_0, h^L_1)$ as in (\ref{eq_speed_mab_geod}).
	\begin{thm}\label{thm_trasnfer}
		The sequence $\frac{1}{k} T_k(h^L_0, h^L_1)$, $k \in \nat^*$, forms an asymptotically Toeplitz operator of Schatten class with symbol $\phi(h^L_0, h^L_1)$.
		If, moreover, $h^L_1$ is continuous, then $\frac{1}{k} T_k(h^L_0, h^L_1)$, $k \in \nat^*$, even forms an asymptotically Toeplitz operator with symbol $\phi(h^L_0, h^L_1)$.
	\end{thm}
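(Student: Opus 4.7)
The plan is to reduce the statement to the smooth case established by the author in \cite{FinSubmToepl} via an approximation argument, using Proposition \ref{prop_equiv_tranf} as the main bridge between norm asymptotics and transfer map asymptotics. First, I would settle the case where $h^L_1$ is itself a smooth positive metric: here the transfer map between the two smooth $L^2$-norms ${\textrm{Hilb}}_k(h^L_0, \chi_0)$ and ${\textrm{Hilb}}_k(h^L_1, \chi_1)$ is precisely of the form treated in \cite{FinSubmToepl}, and the corresponding Mabuchi geodesic is smooth so $\phi(h^L_0, h^L_1)$ coincides with the classical symbol produced there. This already yields the theorem in the smooth case, in fact in the stronger $p = +\infty$ form.

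Next, for a general metric $h^L_1$ with bounded psh potential, apply Demailly regularization to produce a decreasing sequence $h^L_{1,j}$ of smooth positive metrics converging to $h^L_1$. For each fixed $j$ the smooth case provides a symbol $\phi(h^L_0, h^L_{1,j})$ for the operators $\frac{1}{k} T_k(h^L_0, h^L_{1,j})$. To compare with $\frac{1}{k} T_k(h^L_0, h^L_1)$, I would apply Proposition \ref{prop_equiv_tranf} with $H_k^0 := {\textrm{Hilb}}_k(h^L_0, \chi_0)$, $H_k^1 := {\textrm{Hilb}}_k(h^L_1, \chi_1)$, $H_k^2 := {\textrm{Hilb}}_k(h^L_{1,j}, \chi_1)$. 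The hypothesis $\oplus_k H_k^1 \sim_p \oplus_k H_k^2$ is controlled by the Darvas--Lu--Rubinstein asymptotic (\ref{eq_berndt}), which gives
\begin{equation}
\lim_{k \to \infty} \frac{1}{k} d_p\bigl({\textrm{Hilb}}_k(h^L_1, \chi_1), {\textrm{Hilb}}_k(h^L_{1,j}, \chi_1)\bigr) = d_p(h^L_1, h^L_{1,j}),
\end{equation}
and the right-hand side tends to $0$ as $j \to \infty$ by standard Darvas distance continuity under decreasing approximation.

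The remaining issue is to compare the symbols, i.e. to show that $\phi(h^L_0, h^L_{1,j}) \to \phi(h^L_0, h^L_1)$ in the appropriate sense so that $T_k(\phi(h^L_0, h^L_{1,j})) \to T_k(\phi(h^L_0, h^L_1))$ in $p$-Schatten norm, uniformly in $k$. Since Toeplitz quantization is $L^p$-continuous in the symbol, it suffices to obtain $L^p$-convergence of the geodesic speeds, which I would extract from the deeper results on Mabuchi geometry assembled in Section \ref{sect_mabuch}: the geodesic endpoints depend monotonically on inputs, and their initial speeds inherit the corresponding stability. Combining this with Proposition \ref{prop_equiv_tranf} and a triangle inequality across the three approximating objects
\begin{equation}
\tfrac{1}{k} T_k(h^L_0, h^L_1) \;\longleftrightarrow\; \tfrac{1}{k} T_k(h^L_0, h^L_{1,j}) \;\longleftrightarrow\; T_k(\phi(h^L_0, h^L_{1,j})) \;\longleftrightarrow\; T_k(\phi(h^L_0, h^L_1)),
\end{equation}
followed by a diagonal extraction in $j$ and $k$, yields the Schatten-class convergence claimed in the first part of Theorem \ref{thm_trasnfer}.

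For the upgrade to $p = +\infty$ under the assumption that $h^L_1$ is continuous, I would use Proposition \ref{prop_cont_speed}, which ensures that $\phi(h^L_0, h^L_1)$ is continuous and the Darvas distance $d_\infty(h^L_1, h^L_{1,j}) = \max |\log(h^L_1/h^L_{1,j})|$ tends to $0$ (by Dini's theorem applied to the Demailly approximation). The same triangle inequality scheme then works with $p = +\infty$ in Proposition \ref{prop_equiv_tranf} and with operator-norm continuity of the Toeplitz quantization in the continuous symbol. The main obstacle is the middle step: rigorously transferring the convergence $h^L_{1,j} \to h^L_1$ into sufficiently strong convergence $\phi(h^L_0, h^L_{1,j}) \to \phi(h^L_0, h^L_1)$ of the initial geodesic speeds. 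This is really a statement about regularity of the Mabuchi geodesic flow at $t = 0$ under monotone endpoint perturbations, and it is exactly where the machinery developed in Section \ref{sect_mabuch} is indispensable.
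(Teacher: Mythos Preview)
Your overall strategy is essentially the same as the paper's: reduce to the smooth case from \cite{FinSubmToepl} via Demailly regularization, control the transfer maps through (\ref{eq_berndt}) and the exponential metric increasing property (this is exactly Proposition \ref{prop_equiv_tranf}), and then pass to the limit in the symbols. Two points deserve correction, however.

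First, the continuous case ($p = +\infty$) requires no new argument at all: as noted in Remark \ref{rem_trasnfer}, it is already \cite[Theorem 3.2]{FinSubmToepl}. Your proposed Dini-type approximation scheme is unnecessary here, and in fact the uniform convergence of geodesic speeds you would need is not obviously available from the tools you cite.

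Second, and more substantively, the step you flag as ``the main obstacle'' --- convergence $\phi(h^L_0, h^L_{1,j}) \to \phi(h^L_0, h^L_1)$ --- does \emph{not} come from Section \ref{sect_mabuch} of this paper. That section treats contact sets and sign properties of the speed, not stability under endpoint perturbation. The paper instead invokes an external result, Darvas--Lu \cite[Theorem 3.1]{DarLuGeod}, which gives almost-everywhere convergence of the speeds from $d_p$-convergence of the endpoints; combined with the dominated-convergence-type argument in \cite[proof of Proposition 3.5]{FinSubmToepl}, this yields the required Schatten-norm convergence of the Toeplitz operators. Without this citation your proof has a genuine gap at exactly the point you identified.
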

	\begin{rem}\label{rem_trasnfer}
		The second part of Theorem \ref{thm_trasnfer} was established by the author in \cite[Theorem 3.2]{FinSubmToepl}. 
	\end{rem}
	\begin{proof}
		By Remark \ref{rem_trasnfer}, it suffices to establish the first part of Theorem \ref{thm_trasnfer}, on which we concentrate from now on. 
		By the Demailly regularization theorem, there is a sequence of positive metrics $h^{L, i}_1$, $i \in \nat$, on $L$, increasing towards $h^L_1$, see \cite{DemRegul}, \cite{GuedZeriGeomAnal}.
		\par 
		By \cite[Proposition 4.6]{DarvasFinEnerg}, for the Darvas distance, $d_p$, $p \in [1, +\infty[$, we have
		\begin{equation}\label{eq_dist_conv}
			\lim_{i \to 0} d_p(h^L_1, h^{L, i}_1) = 0.
		\end{equation}
		From (\ref{eq_emi}), (\ref{eq_berndt}) and (\ref{eq_dist_conv}), we conclude that for any $\epsilon > 0$, there is $i_0 \in \nat$, such that for any $i \geq i_0$, there is $k_0 \in \nat$, such that for any $k \geq k_0$, we have 
		\begin{equation}\label{eq_trans_1}
			\Big\|
				T_k(h^L_0, h^{L, i}_1)
				-
				T_k(h^L_0, h^L_1)
			\Big\|_p
			\leq
			\epsilon k.
		\end{equation}
		\par 
		Now, from Darvas-Lu \cite[Theorem 3.1]{DarLuGeod} and (\ref{eq_dist_conv}), we deduce that 
		\begin{equation}
			\phi(h^L_0, h^{L, i}_1) \quad \text{converges almost everywhere to} \quad \phi(h^L_0, h^{L, i}_1), \text{ as $i \to \infty$}.
		\end{equation}
		From this and \cite[the proof of Proposition 3.5]{FinSubmToepl}, we conclude that for any $\epsilon > 0$, there is $i_0 \in \nat$, such that for any $i \geq i_0$, there is $k_0 \in \nat$, such that for any $k \geq k_0$, we have 
		\begin{equation}\label{eq_trans_2}
			\Big\|
				T_k(\phi(h^L_0, h^{L, i}_1))
				-
				T_k(\phi(h^L_0, h^L_1))
			\Big\|_p
			\leq
			\epsilon.
		\end{equation}
		Theorem \ref{thm_trasnfer} for $p \in [1, +\infty[$, now follows directly from the validity of Theorem \ref{thm_trasnfer} for $h^L_0 := h^L_0$ and $h^L_1 = h^{L, i}_1$, (\ref{eq_trans_1}) and (\ref{eq_trans_2}).
	\end{proof}
	
	\begin{proof}[Proof of Theorem \ref{thm_log2}]
		The key observation for our proof is that $- \log (T_k(f))$ can be interpreted as a transfer map between ${\textrm{Hilb}}_k(h^L, \chi)$ and ${\textrm{Hilb}}_k(h^L, f \cdot \chi)$.
		This immediately follows from the identity $\scal{T_k(f) s_1}{s_2}_{{\textrm{Hilb}}_k(h^L, \chi)} = \scal{s_1}{s_2}_{{\textrm{Hilb}}_k(h^L, f \cdot \chi)}$, where $s_1, s_2 \in H^0(X, L^{\otimes k})$.
		By this and Theorem \ref{thm_equiv_2}, we deduce
		\begin{equation}
			T_k(f) \geq \exp(- T_k(h^L, h^L_0)),
		\end{equation}
		where $h^L_0$ is as in Theorem \ref{thm_equiv_2}.
		By using the fact that $\log$ is order preserving, i.e. if $0 \leq A_k \leq B_k$, then $\log (A_k) \leq \log (B_k)$, cf. \cite{LoewnerArt} or \cite{SimonBarryLoewn}, we get $- \log (T_k(f)) \leq T_k(h^L, h^L_0)$.
		By this, Theorem \ref{thm_trasnfer} and the trivial bound $T_k(f) \leq \esssup_X f \cdot {\rm{Id}}$, there is $C > 0$, such that for any $k \in \nat^*$, we have $\|  \log (T_k(f)) \| \leq C k$.
		Note that this estimate provides an independent proof of (\ref{eq_min_bndliumar}).
		\par 
		Similarly, by Theorems \ref{thm_equiv_2} and Proposition \ref{prop_equiv_tranf}, we deduce that for any $\epsilon > 0$, $p \in [1, +\infty[$, there is $k_0 \in \nat$, such that for any $k \geq k_0$, we have
		\begin{equation}\label{eq_thm_log2pf}
			\Big\|
				\log (T_k(f))
				+
				T_k(h^L, h^L_{{\rm{Leb}}, NZ})
			\Big\|_p
			\leq
			\epsilon k.
		\end{equation}
		By Theorem \ref{thm_trasnfer} and (\ref{eq_thm_log2pf}), we then deduce that $\frac{1}{k} \log (T_k(f))$, $k \in \nat^*$, forms an asymptotically Toeplitz operator of Schatten class with symbol $\phi(h^L, NZ)$.
		The last statement of Theorem \ref{thm_log2} follows from Corollary \ref{thm_equiv_1}, Theorem \ref{thm_trasnfer} and Propositions \ref{prop_plurireg}, \ref{prop_equiv_tranf} in a similar way.
	\end{proof}

	\section{Asymptotic class of the weighted $L^2$-norms}\label{sect_bern_mark}
	The main goal of this section is to determine the asymptotic class of the $L^2$-norm associated with a weight, i.e. to establish Theorem \ref{thm_equiv_2}.
	The proof is based on the following more general result.
	\begin{thm}\label{thm_asympt_bm}
		We fix a non-pluripolar Borel probability measure $\mu$ on $X$, and a continuous metric $h^L_0$ on $L$, verifying $h^L_{\mu} \geq h^L_0$, where $h^L_{\mu}$ is the non-negligible psh envelope defined in (\ref{defn_nonnegl_env}).
		Then for any $\epsilon > 0$, there is $C > 0$, such that for any $k \in \nat$, we have
		\begin{equation}\label{eq_asympt_bm}
			{\textrm{Hilb}}_k(h^L, \mu)
			\geq
			C
			\cdot
			\exp(- \epsilon k)
			\cdot
			{\textrm{Ban}}_k^{\infty}(X, h^L_0).
		\end{equation}
	\end{thm}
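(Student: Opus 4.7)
The plan is to reformulate the desired inequality as a uniform asymptotic upper bound on the Bergman kernel of $\textrm{Hilb}_k(h^L, \mu)$ measured against the metric $h^L_0$, and to combine this with the envelope identity \eqref{eq_esssup_phi_mu_ident} and the hypothesis $h^L_{\mu} \geq h^L_0$. Fix a positive smooth reference metric $h^L_*$ with $\omega := 2\pi c_1(L, h^L_*)$, and write $h^L = e^{-\phi} h^L_*$, $h^L_0 = e^{-\phi_0} h^L_*$, $h^L_{\mu} = e^{-\phi_{\mu}} h^L_*$; by Proposition~\ref{prop_env_gen_fact}, $\phi_{\mu}$ is a bounded $\omega$-psh potential, and the hypothesis reads $\phi_{\mu} \leq \phi_0$ on $X$. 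Since $|s(x)|^2_{h_0^{L^{\otimes k}}} = e^{k(\phi - \phi_0)(x)} |s(x)|^2_{h^{L^{\otimes k}}}$, the statement \eqref{eq_asympt_bm} is equivalent to the bound
\begin{equation*}
C_k := \sup_{x \in X} \exp(k(\phi - \phi_0)(x)) \cdot B_k^{\mu}(x) \leq C \cdot \exp(\epsilon k),
\end{equation*}
where $B_k^{\mu}(x)$ denotes the diagonal of the Bergman kernel of $\textrm{Hilb}_k(h^L, \mu)$.

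The heart of the argument is then the uniform pointwise Bergman kernel estimate
\begin{equation*}
\tfrac{1}{k} \log B_k^{\mu}(x) \leq (\phi_{\mu} - \phi)(x) + \epsilon_k, \quad x \in X,
\end{equation*}
with $\epsilon_k \to 0$ as $k \to \infty$. Granting this bound, one concludes immediately from the hypothesis $\phi_{\mu} \leq \phi_0$ that $\tfrac{1}{k} \log C_k \leq \sup_{x \in X}(\phi_{\mu} - \phi_0)(x) + \epsilon_k \leq \epsilon_k$, which yields the theorem for $k$ large. To establish the pointwise Bergman estimate, take $s$ with $\|s\|_{\textrm{Hilb}_k(h^L, \mu)} = 1$, let $\psi_s := \frac{1}{k} \log |s|^2_{h_*^{L^{\otimes k}}}$ (an $\omega$-psh function up to an additive constant), and rewrite the target as $\psi_s(x) \leq \phi_{\mu}(x) + \epsilon_k$; by the envelope identity \eqref{eq_esssup_phi_mu_ident}, this is equivalent to the uniform asymptotic $\esssup_{\mu}(\psi_s - \phi) \leq \epsilon_k$ over the relevant family of sections. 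I would establish this by approximating $\phi_{\mu}$ from above by smooth strictly $\omega$-psh Demailly regularizations $\phi_{\mu, j} \downarrow \phi_{\mu}$, corresponding to smooth positive metrics $h^L_{\mu, j}$, and applying the classical Tian--Dai--Liu--Ma type Bergman kernel asymptotics to the smooth metrics $h^L_{\mu, j}$ combined with the Bernstein--Markov property for a smooth volume form against such metrics.

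The main obstacle is precisely the pointwise Bergman bound: for a general non-pluripolar $\mu$ not assumed to be Bernstein--Markov, the $L^2(\mu)$ normalization $\int e^{k(\psi_s - \phi)} d\mu = 1$ does not directly control $\esssup_{\mu}(\psi_s - \phi)$, since a single section may concentrate its mass on an exponentially small $\mu$-measurable set. Circumventing this difficulty requires exploiting both the $\omega$-psh regularity of $\psi_s$ (through compactness properties of $\omega$-psh families and sub-mean-value inequalities adapted to the scale $k^{-1/2}$) and the defining property of the envelope $\phi_{\mu}$ via \eqref{eq_esssup_phi_mu_ident}; the slack for absorbing the loss comes from the Demailly approximation $\phi_{\mu, j} \downarrow \phi_{\mu}$. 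The hypothesis $h^L_{\mu} \geq h^L_0$ is then exactly what ensures the final supremum over $X$ is non-positive, closing the argument.
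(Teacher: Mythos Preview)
Your reformulation in terms of the Bergman kernel is correct, but the intermediate pointwise bound you aim for, $\tfrac{1}{k}\log B_k^{\mu}(x)\le(\phi_{\mu}-\phi)(x)+\epsilon_k$, is \emph{stronger} than what is needed and is precisely the statement the paper's argument shows cannot be obtained by a Dini-type compactness argument. In the language of the paper's proof, your claim amounts to the uniform convergence $F_p\to F$ on ${\rm PSH}(X,\omega)/\real$, where $F_p(\psi)=\tfrac{1}{p}\log\int e^{p(\psi-\phi)}d\mu$ and $F(\psi)=\sup_X(\psi-\phi_{\mu})=\esssup_{\mu}(\psi-\phi)$. The convergence is monotone and pointwise, but $F$ is \emph{not} upper semi-continuous when $\phi_{\mu}$ is merely bounded psh (Hartogs' lemma requires a continuous comparison function), so Dini does not apply. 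Your proposed fix, Demailly-regularizing $\phi_{\mu}$ and invoking Tian--type Bergman asymptotics, does not touch this obstruction: those asymptotics concern $L^2$-norms against smooth volume forms, whereas the quantity you must control is $\esssup_{\mu}(\psi_s-\phi)$ for an arbitrary non-pluripolar $\mu$ under only an $L^2(\mu)$ normalization; approximating $\phi_{\mu}$ does nothing to the measure $\mu$ and hence to $F_p$.

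The paper circumvents this by never aiming at the $\phi_{\mu}$-bound. It introduces instead $G(\psi):=\sup_X(\psi-\phi_0)$, which is upper semi-continuous on the compact space $\mathcal{T}(X,\omega)$ because $\phi_0$ is continuous (Hartogs), and observes that the hypothesis $\phi_{\mu}\le\phi_0$ gives $G\le F$. Then $G_p:=\max(G-F_p,0)$ is a decreasing sequence of usc functions on a compact space converging pointwise to $0$, so Dini gives uniform convergence, which unwinds exactly to \eqref{eq_asympt_bm}. In short: the role of the continuous minorant $h^L_0$ is not merely to close the estimate at the end, as in your sketch, but to make the limiting functional upper semi-continuous so that the compactness argument goes through at all.
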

	\begin{proof}
		Our proof builds on the proof of Theorem \ref{thm_bbwn_determining} from \cite{BerBoucNys}, but diverges from it in one essential aspect: to overcome the use of determining assumption on the measure from Theorem \ref{thm_bbwn_determining}, we use non-negligible envelopes instead of the psh envelopes.
		\par 
		To explain this in details, we work on the level of potentials instead of metrics. 
		We fix a reference metric $h^L_*$, denote $\omega := 2 \pi c_1(L, h^L_*)$, and denote by $\phi$, $\phi_0$ and $\phi_{\mu}$ the potentials of $h^L$, $h^L_0$ and $h^L_{\mu}$ respectively, see (\ref{eq_pot_metr_corr}).
		We shall establish the following result: for any $\epsilon > 0$, there is $C > 0$, such that for any $\psi \in {\rm{PSH}}(X, \omega)$, $p \geq 1$, we have
		\begin{equation}\label{bm_weights}
			\sup_X \big( \exp(p(\psi - \phi_0)) \big)
			\leq
			C
			\cdot
			\exp(\epsilon p)
			\cdot
			\int_X \exp(p(\psi - \phi)) d \mu,
		\end{equation}
		which we suggest to compare (\ref{bm_psh_weights}).
		Once we establish (\ref{bm_weights}), by plugging in $\psi := \frac{1}{k} \log |s|_{h^{L^{\otimes k}}_*}$ and $p = 2 k$ in (\ref{bm_weights}), we would get (\ref{eq_asympt_bm}).
		For $p \in [1, +\infty[$, we introduce the functionals
		\begin{equation}
			F_p(\psi) := \frac{1}{p} \log \int_X \exp(p(\psi - \phi)) d \mu, 
			\qquad 
			F(\psi) := \sup_X (\psi - \phi_{\mu}),
		\end{equation}
		defined for $\psi \in {\rm{PSH}}(X, \omega)$.
		\par 
		\begin{sloppypar}
		Now, by the usual facts from integration theory, as $p \to + \infty$, $F_p$ converges pointwise on ${\rm{PSH}}(X, \omega)$ to $\log \| \exp(\psi - \phi) \|_{L^{\infty}(\mu)}$.
		Clearly, we have $\log \| \exp(\psi - \phi) \|_{L^{\infty}(\mu)} = \esssupp_{\mu} (\psi-\phi)$.
		By (\ref{eq_esssup_phi_mu_ident}), we then deduce that $\esssupp_{\mu} (\psi-\phi) = F(\psi)$.
		In conclusion, we obtain that as $p \to + \infty$, $F_p$ converges pointwise on ${\rm{PSH}}(X, \omega)$ to $F$.
		\end{sloppypar}
		\par 
		By \cite[Lemma 1.14]{BerBoucNys}, the functionals $F_p$ are continuous on ${\rm{PSH}}(X, \omega)$.
		Moreover, $F_p$ are increasing in $p \in [1, +\infty[$ by \cite[proof of Theorem 1.13]{BerBoucNys}.
		Also, $F - F_p$ is clearly invariant by translation (adding a constant to the parameter), thus it descends on a function on the quotient space, which is isomorphic with the space of positive $(1, 1)$-currents lying in the cohomology class of $c_1(L)$, which we denote by $\mathcal{T}(X, \omega)$.
		The latter space is compact (in the weak topology of currents), cf. \cite[Proposition 8.5]{GuedjZeriahBook}.
		\par 
		Unfortunately, the functional $F$ is not necessarily upper semi-continuous (as Hartogs lemma, cf. \cite[Theorem 1.46]{GuedjZeriahBook}, doesn't apply for the non-necessarily continuous potential $\phi_{\mu}$).  
		The assumption of pluriregularity of $(K, h^L)$ in the proof of \cite[Theorem 1.13]{BerBoucNys} enters precisely at this crucial moment.
		To bypass additional assumptions, we consider the functional $G(\psi) := \sup_X (\psi - \phi_0)$, $\psi \in {\rm{PSH}}(X, \omega)$, for which we clearly have $F \geq G$, and which is upper semi-continuous by the Hartogs lemma (we use here crucially our assumption that $\phi_0$ is continuous).
		Then the functionals $G_p := \max(G - F_p, 0)$ on $\mathcal{T}(X, \omega)$ are upper semi-continuous, they are decreasing in $p \in [1, + \infty[$, and converging, as $p \to + \infty$, to zero. 
		By the already mentioned compactness of $\mathcal{T}(X, \omega)$ and Dini's theorem, we conclude that the convergence of $G_p$ to $0$, as $p \to + \infty$, is uniform on ${\rm{PSH}}(X, \omega)$.
		The reader will check that the last fact is just a reformulation of (\ref{bm_weights}), with only one change: the inequality holds for $p \geq p_0$, where $p_0$ depends on $\epsilon$, and for $C = 1$.
		\par 
		Now, by the compactness of $\mathcal{T}(X, \omega)$ and the fact that $G_p$ are upper semi-continuous and decreasing in $p$, there is $C > 0$, so that $C \geq G_1 \geq G_p$ for any $p \geq 1$.
		This fact corresponds to (\ref{bm_weights}), with only one change: the inequality holds for $p \geq 1$, but $\epsilon$ is replaced by $C$.
		A combination of this and a previously established estimate yields (\ref{bm_weights}) in full generality.
	\end{proof}

	\begin{proof}[Proof of Theorem \ref{thm_equiv_2}]
		Remark first that directly from Proposition \ref{prop_leb_env_id} and Lebesgue's density theorem, for $C := \esssup_K f$, we have
		\begin{equation}\label{eq_bnd_hilb_leb}
			C 
			\cdot
			{\textrm{Hilb}}_k(h^L_{{\rm{Leb}}, NZ}, \chi)
			\geq
			{\textrm{Hilb}}_k(h^L, f \cdot \chi).
		\end{equation}
		The existence of $C > 0$, so that the lower bound ${\textrm{Hilb}}_k(h^L, f \cdot \chi) \geq C \cdot \exp(-\epsilon k) \cdot {\textrm{Hilb}}_k(h^L_0, \chi)$, holds for a fixed continuous metric $h^L_0$ on $L$, verifying $h^L_{{\rm{Leb}}, NZ} \geq h^L_0$, follows immediately from Theorem \ref{thm_asympt_bm}.
		The statement concerning the $\sim_{+ \infty}$-equivalence follows also immediately from Theorem \ref{thm_asympt_bm}, as under the continuity of $h^L_{{\rm{Leb}}, NZ}$, one can take $h^L_0 := h^L_{{\rm{Leb}}, NZ}$.
		It is hence only left to establish the statement for the $\sim_p$-equivalence, $p \in [1, +\infty[$.
		\par 
		Recall that Lidskii inequality implies that if the Hermitian metrics $H_0, H_1, H_2$ on a finitely dimensional vector space $V$ are ordered as $H_0 \leq H_1 \leq H_2$, then $d_p(H_0, H_1) \leq d_p(H_0, H_2)$, cf. \cite[Theorem 2.7]{DarvLuRub}.
		From this, Theorem \ref{thm_asympt_bm} and (\ref{eq_bnd_hilb_leb}), we deduce that for any $\epsilon > 0$, and continuous $h^L_0$, verifying $h^L_{{\rm{Leb}}, NZ} \geq h^L_0$, there is $k_0 \in \nat$, such that for any $k \geq k_0$, we have
		\begin{equation}\label{eq_dp_hilb_lebenv}
			d_p
			\Big(
				{\textrm{Hilb}}_k(h^L_{{\rm{Leb}}, NZ}, \chi), {\textrm{Hilb}}_k(h^L, f \cdot \chi)
			\Big)
			\leq
			\epsilon k
			+
			d_p
			\Big(
				{\textrm{Hilb}}_k(h^L_{{\rm{Leb}}, NZ}, \chi), {\textrm{Hilb}}_k(h^L_0, \chi)
			\Big).
		\end{equation}
		Now, by Demailly regularization theorem, there is a sequence of positive metrics $h^L_i$, $i \in \nat^*$, on $L$, increasing towards $h^L_{{\rm{Leb}}, NZ}$.
		If we apply (\ref{eq_berndt}) and (\ref{eq_dp_hilb_lebenv}) for $h^L_0 := h^L_i$, we would get
		\begin{equation}\label{eq_dp_hilb_lebenv2}
			\limsup_{k \to \infty} \frac{1}{k}  d_p
			\Big(
				{\textrm{Hilb}}_k(h^L_{{\rm{Leb}}, NZ}, \chi), {\textrm{Hilb}}_k(h^L, f \cdot \chi)
			\Big)
			\leq
			\epsilon + 
			d_p(h^L_{{\rm{Leb}}, NZ}, h^L_i).
		\end{equation}
		From (\ref{eq_dist_conv}), by taking the limit $i \to +\infty$ in (\ref{eq_dp_hilb_lebenv2}), we deduce Theorem \ref{thm_equiv_2}.
	\end{proof}
	\par 
	Remark that by Proposition \ref{prop_env_gen_fact} and Theorem \ref{thm_asympt_bm}, for an arbitrary non-pluripolar Borel measure $\mu$ and a continuous Hermitian metric $h^L_0$ on $L$, so that $h^L_{\mu} \geq h^L_0$, the following holds. 
	For any $\epsilon > 0$, there is $C > 0$, such that for any $k \in \nat$, we have
	\begin{equation}\label{eq_bound_below_measure}
		{\textrm{Hilb}}_k(h^L_{\mu}, \mu) 
		\geq 
		{\textrm{Hilb}}_k(h^L, \mu)
		\geq
		C
		\cdot
		\exp(- \epsilon k)
		\cdot
		{\textrm{Hilb}}_k(h^L_0, \mu).
	\end{equation}
	However, it is not generally true that $\oplus_{k = 0}^{+ \infty} {\textrm{Hilb}}_k(h^L, \mu) \sim_p \oplus_{k = 0}^{+ \infty} {\textrm{Hilb}}_k(h^L_{\mu}, \chi)$, for some $p \in [1, +\infty[$.
	To illustrate this, we need the following result.
	\begin{prop}\label{prop_weak_bm}
		Assume that $h^L_{\mu} = h^L_{K *}$, then the measure $\mu$ is weakly Bernstein-Markov with respect to $(K, h^L)$.
		Moreover, if $\mu$ is such that $\oplus_{k = 0}^{+ \infty} {\textrm{Hilb}}_k(h^L, \mu) \sim_p \oplus_{k = 0}^{+ \infty} {\textrm{Hilb}}_k(h^L_{\mu}, \chi)$ for some $p \in [1, +\infty[$, then $h^L_{\mu} = h^L_{K *}$ if and only if $\mu$ is weakly Bernstein-Markov with respect to $(K, h^L)$.
	\end{prop}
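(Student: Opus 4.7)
The plan is to translate the weak Bernstein-Markov condition into a $\sim_1$-equivalence of graded norms via Proposition \ref{prop_d_p_and_vol}, and to reduce both statements to the comparison $\oplus_k {\textrm{Hilb}}_k(h^L, \mu) \sim_1 \oplus_k {\textrm{Hilb}}_k(h^L_\mu, \chi)$ by inserting this intermediate norm. Since $\mu$ is a finite measure supported on $K$, one trivially has ${\textrm{Hilb}}_k(h^L, \mu) \leq \mu(X)^{1/2} \cdot {\textrm{Ban}}_k^\infty(K, h^L)$, so the ordered case of Proposition \ref{prop_d_p_and_vol} identifies weak Bernstein-Markov with $\oplus_k {\textrm{Hilb}}_k(h^L, \mu) \sim_1 \oplus_k {\textrm{Ban}}_k^\infty(K, h^L)$. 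I would also exploit the fact that the inner steps of the proof of Proposition \ref{thm_bbwn_determining_new} already established, with no pluriregularity hypothesis on $(K, h^L)$, the equivalence $\oplus_k {\textrm{Ban}}_k^\infty(K, h^L) \sim_p \oplus_k {\textrm{Hilb}}_k(h^L_{K*}, \chi)$ for every $p \in [1, +\infty[$; see \eqref{eq_ban_hilb_etc}.

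For the first statement, granting $h^L_\mu = h^L_{K*}$, it then suffices to prove $\oplus_k {\textrm{Hilb}}_k(h^L, \mu) \sim_1 \oplus_k {\textrm{Hilb}}_k(h^L_\mu, \chi)$. I would model the argument on the proof of Theorem \ref{thm_equiv_2}: by the Demailly regularization theorem, choose a sequence of smooth positive metrics $h^L_i$ on $L$ increasing towards $h^L_\mu$, and apply Theorem \ref{thm_asympt_bm} with $h^L_0 := h^L_i$ (permitted because $h^L_\mu \geq h^L_i$) to obtain
\begin{equation*}
{\textrm{Hilb}}_k(h^L, \mu) \geq C_{i, \epsilon} \cdot \exp(-\epsilon k) \cdot {\textrm{Ban}}_k^\infty(h^L_i)
\end{equation*}
for any $\epsilon > 0$ and all sufficiently large $k$. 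The classical Bernstein-Markov property for a smooth positive metric (\cite[Proposition 2.18]{FinNarSim}) then replaces ${\textrm{Ban}}_k^\infty(h^L_i)$ by ${\textrm{Hilb}}_k(h^L_i, \chi)$ up to a subexponential multiplicative factor, and \eqref{eq_berndt} together with the convergence $d_1(h^L_i, h^L_\mu) \to 0$ from \cite[Proposition 4.6]{DarvasFinEnerg} transports the conclusion from $h^L_i$ to $h^L_\mu$. Combined with the trivial upper bound ${\textrm{Hilb}}_k(h^L, \mu) \leq \mu(X)^{1/2} \cdot {\textrm{Ban}}_k^\infty(K, h^L) \sim_p {\textrm{Hilb}}_k(h^L_\mu, \chi)$ and with the volume identity \eqref{eq_vol_d1} valid in the ordered setting, this should yield $|\log {\rm vol}({\textrm{Hilb}}_k(h^L, \mu)) - \log {\rm vol}({\textrm{Ban}}_k^\infty(K, h^L))| = o(k \dim H^0(X, L^{\otimes k}))$, which is precisely weak Bernstein-Markov.

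The second statement would follow formally: the direction $(\Leftarrow)$ is the first statement, and for $(\Rightarrow)$ the weak Bernstein-Markov condition combined with \eqref{eq_ban_hilb_etc} gives $\oplus_k {\textrm{Hilb}}_k(h^L, \mu) \sim_1 \oplus_k {\textrm{Hilb}}_k(h^L_{K*}, \chi)$, while the additional hypothesis, together with the power-mean inequality $d_1 \leq d_p$ for $p \geq 1$, gives $\oplus_k {\textrm{Hilb}}_k(h^L, \mu) \sim_1 \oplus_k {\textrm{Hilb}}_k(h^L_\mu, \chi)$; transitivity of $\sim_1$ and \eqref{eq_berndt} then produce $d_1(h^L_\mu, h^L_{K*}) = 0$, whence $h^L_\mu = h^L_{K*}$. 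I expect the main obstacle to lie in the lower bound in the first statement: Theorem \ref{thm_asympt_bm} only controls ${\textrm{Hilb}}_k(h^L, \mu)$ from below in terms of the $L^\infty$-norm of a strictly smaller continuous metric, so the absence of continuity of $h^L_\mu$ forces the passage from $h^L_i$ to $h^L_\mu$ to be carried out at the level of Darvas-type distances rather than pointwise, exactly as in \eqref{eq_dp_hilb_lebenv}--\eqref{eq_dp_hilb_lebenv2}.
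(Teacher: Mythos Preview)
Your proposal is correct and follows essentially the same route as the paper: both reduce weak Bernstein--Markov to a $\sim_1$-equivalence via Proposition~\ref{prop_d_p_and_vol} and the ordering ${\textrm{Ban}}_k^{\infty}(K,h^L)\geq{\textrm{Hilb}}_k(h^L,\mu)$, invoke \eqref{eq_ban_hilb_etc} to pass to ${\textrm{Hilb}}_k(h^L_{K*},\chi)$, and prove $\oplus_k{\textrm{Hilb}}_k(h^L,\mu)\sim_p\oplus_k{\textrm{Hilb}}_k(h^L_\mu,\chi)$ by the Demailly-regularization/Lidskii argument of Theorem~\ref{thm_equiv_2} (the paper records this as \eqref{eq_dp_hilb_lebenv_modif}--\eqref{eq_dp_hilb_lebenv2_modif}); the second statement is then deduced, as you do, from transitivity of $\sim_1$, \eqref{eq_berndt}, and the fact that $d_1$ separates points. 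The only cosmetic slip is your labeling of the directions in the biconditional: the first statement gives the implication $h^L_\mu=h^L_{K*}\Rightarrow\text{weak BM}$, which is the $(\Rightarrow)$ direction of the second statement as written, not $(\Leftarrow)$.
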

	Immediately from Remark \ref{rem_totik} and Proposition \ref{prop_weak_bm}, we see that there are non-pluripolar Borel measures $\mu$, for which $\oplus_{k = 0}^{+ \infty} {\textrm{Hilb}}_k(h^L, \mu) \not\sim_p \oplus_{k = 0}^{+ \infty} {\textrm{Hilb}}_k(h^L_{\mu}, \chi)$ for any $p \in [1, +\infty[$.
	\begin{sloppypar}
	\begin{proof}[Proof of Proposition \ref{prop_weak_bm}]
		Remark that ${\textrm{Ban}}_k^{\infty}(K, h^L) \geq {\textrm{Hilb}}_k(h^L, \mu)$, and hence for any continuous metric $h^L_0$ so that $h^L_{\mu} \geq h^L_0$, as in (\ref{eq_dp_hilb_lebenv}), we have
		\begin{equation}\label{eq_dp_hilb_lebenv_modif}
			d_p
			\Big(
				{\textrm{Hilb}}_k(h^L_{\mu}, \chi), {\textrm{Hilb}}_k(h^L, \mu)
			\Big)
			\leq
			\epsilon
			k
			+
			d_p
			\Big(
				{\textrm{Ban}}_k^{\infty}(K, h^L), {\textrm{Hilb}}_k(h^L_0, \chi)
			\Big).
		\end{equation}
		From (\ref{eq_ban_hilb_etc}) and (\ref{eq_dp_hilb_lebenv_modif}), we then conclude in the same way as in (\ref{eq_dp_hilb_lebenv2}) that 
		\begin{equation}\label{eq_dp_hilb_lebenv2_modif}
			\limsup_{k \to \infty} \frac{1}{k}  d_p
			\Big(
				{\textrm{Hilb}}_k(h^L_{\mu}, \chi), {\textrm{Hilb}}_k(h^L, \mu)
			\Big)
			\leq
			d_p(h^L_{\mu}, h^L_{K *}).
		\end{equation}
		We see that if $h^L_{\mu} = h^L_{K *}$, then we have $\oplus_{k = 0}^{+ \infty} {\textrm{Hilb}}_k(h^L_{\mu}, \chi) \sim_p \oplus_{k = 0}^{+ \infty} {\textrm{Hilb}}_k(h^L, \mu)$.
		\par 
		Let us now establish that $\mu$ is such that $\oplus_{k = 0}^{+ \infty} {\textrm{Hilb}}_k(h^L, \mu) \sim_p \oplus_{k = 0}^{+ \infty} {\textrm{Hilb}}_k(h^L_{\mu}, \chi)$ for some $p \in [1, +\infty[$, then $\mu$ is weakly Bernstein-Markov with respect to $(K, h^L)$ if and only if $h^L_{\mu} = h^L_{K *}$, which would thereby finish the proof of Proposition \ref{prop_weak_bm}.
		From Proposition \ref{prop_d_p_and_vol} and the trivial bound  ${\textrm{Ban}}_k^{\infty}(K, h^L) \geq {\textrm{Hilb}}_k(h^L, \mu)$, we see that $\mu$ is weakly Bernstein-Markov with respect to $(K, h^L)$ if and only if $\oplus_{k = 0}^{+ \infty} {\textrm{Hilb}}_k(h^L, \mu) \sim_1 \oplus_{k = 0}^{+ \infty} {\textrm{Ban}}_k^{\infty}(K, h^L)$.
		If $\mu$ is such that $\oplus_{k = 0}^{+ \infty} {\textrm{Hilb}}_k(h^L, \mu) \sim_p \oplus_{k = 0}^{+ \infty} {\textrm{Hilb}}_k(h^L_{\mu}, \chi)$ for some $p \in [1, +\infty[$, by (\ref{eq_ban_hilb_etc}), we see that $\oplus_{k = 0}^{+ \infty} {\textrm{Hilb}}_k(h^L, \mu) \sim_1 \oplus_{k = 0}^{+ \infty} {\textrm{Ban}}_k^{\infty}(K, h^L)$ if and only if $\oplus_{k = 0}^{+ \infty} {\textrm{Hilb}}_k(h^L_{\mu}, \chi) \sim_1 \oplus_{k = 0}^{+ \infty} {\textrm{Hilb}}_k(h^L_{K *}, \chi)$.
		The latter condition is equivalent to  $h^L_{\mu} = h^L_{K *}$ by (\ref{eq_berndt}) and the fact that $d_1$ separates points.
	\end{proof}
	\end{sloppypar}
	
	Due to the reasons explained after (\ref{eq_bound_below_measure}), we introduce the following definition.
	
	\begin{defn}\label{defn_approx}
		For a bounded metric $h^L$ on $L$ with a psh potential, a non-pluripolar measure $\mu$ is said to be \textit{approximable} if for any (or some) sequence of positive smooth metrics $h^L_i$ on $L$ which increase towards $h^L$, as $i \to \infty$, and for any $p \in [1, +\infty[$, the following holds
		\begin{equation}\label{eq_approx_defn}
			\lim_{i \to \infty}
			\limsup_{k \to \infty}
			\frac{1}{k}
			d_p
			\Big(
				{\textrm{Hilb}}_k(h^L, \mu), {\textrm{Hilb}}_k(h^L_i, \mu)
			\Big)
			=
			0.
		\end{equation}	
		We say that $\mu$ is \textit{approximable} if it is approximable for any $h^L$ as above.
	\end{defn}
	\begin{rem}
		By Theorem \ref{thm_asympt_bm} and Proposition \ref{prop_d_p_and_vol}, it is enough to verify (\ref{eq_approx_defn}) for $p = 1$.
		Also, immediately from Dini's theorem, any non-pluripolar measure $\mu$ is approximable for an arbitrary continuous metric $h^L$ on $L$.
	\end{rem}
	By (\ref{eq_berndt}) and (\ref{eq_dist_conv}), we see that the Lebesgue measure on $X$ is approximable.
	By repeating the proof of Theorem \ref{thm_equiv_2}, the reader will verify the following result.
	\begin{prop}\label{prop_approx_mult_linf}
		If $\mu$ is approximable, then for any $f \in L^{\infty}(K, \mu)$, $f \geq 0$, where $K$ is the support of $\mu$, the measure $f \cdot \mu$ is also approximable.
	\end{prop}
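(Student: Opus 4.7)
Set $\nu := f \cdot \mu$ and fix a sequence of smooth positive metrics $h^L_i$ increasing to the given bounded metric $h^L$ with psh potential. The plan is to mirror the proof of Theorem \ref{thm_equiv_2}: first sandwich the weighted Hilbert form ${\textrm{Hilb}}_k(h^L_i, \nu)$ between scalar multiples of ${\textrm{Hilb}}_k(h^L_i, \mu)$, then reduce $d_p({\textrm{Hilb}}_k(h^L, \nu), {\textrm{Hilb}}_k(h^L_i, \nu))$ to the corresponding quantity for $\mu$, which is controlled by hypothesis. A preliminary normalization (dividing $f$ by $\int_X f \, d \mu$, which leaves $d_p$ invariant since scaling a measure rescales all Hilbert forms by a common factor) reduces us to the case when $\nu$ is a probability measure; the assumption $f \geq 0$ and $\mu$ being non-pluripolar then ensures $\nu$ is a non-pluripolar probability measure, provided $f$ is not $\mu$-a.e.\ zero (otherwise the statement is vacuous).

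Setting $C := \esssup_\mu f$, the inequality $f \leq C$ almost everywhere with respect to $\mu$ together with the ordering $h^L_i \leq h^L$ yields the chain
\[
{\textrm{Hilb}}_k(h^L_i, \nu) \leq {\textrm{Hilb}}_k(h^L, \nu) \leq C \cdot {\textrm{Hilb}}_k(h^L, \mu).
\]
By Lidskii's inequality, cf. \cite[Theorem 2.7]{DarvLuRub}, combined with the triangle inequality for $d_p$ on Hermitian norms and the scale invariance $d_p(\alpha H_1, \alpha H_2) = d_p(H_1, H_2)$, this reduces the control of $d_p({\textrm{Hilb}}_k(h^L, \nu), {\textrm{Hilb}}_k(h^L_i, \nu))$ to bounding
\[
d_p({\textrm{Hilb}}_k(h^L_i, \nu), C \cdot {\textrm{Hilb}}_k(h^L_i, \mu)) \quad \text{and} \quad d_p({\textrm{Hilb}}_k(h^L_i, \mu), {\textrm{Hilb}}_k(h^L, \mu)).
\]
The second quantity, divided by $k$, tends to zero in the iterated limit $\lim_i \limsup_k$ directly by the assumed approximability of $\mu$ for $h^L$.

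To handle the first quantity, I would apply Theorem \ref{thm_asympt_bm} to the continuous metric $h^L_i$ and the non-pluripolar probability measure $\nu$, taking $h^L_0 := h^L_i$; the required condition $(h^L_i)_{\nu} \geq h^L_i$ is automatic from the definition of the non-negligible envelope. This yields, for any $\epsilon > 0$, a constant $C_\epsilon > 0$ with
\[
{\textrm{Hilb}}_k(h^L_i, \nu) \geq C_\epsilon \exp(-\epsilon k) \cdot {\textrm{Ban}}_k^\infty(X, h^L_i) \geq C_\epsilon \exp(-\epsilon k) \cdot {\textrm{Hilb}}_k(h^L_i, \mu),
\]
where the second inequality holds because $\mu$ is a probability measure. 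Combined with the upper bound ${\textrm{Hilb}}_k(h^L_i, \nu) \leq C \cdot {\textrm{Hilb}}_k(h^L_i, \mu)$ and another application of Lidskii, this gives $d_p({\textrm{Hilb}}_k(h^L_i, \nu), C \cdot {\textrm{Hilb}}_k(h^L_i, \mu)) = O(\epsilon k + 1)$. Passing to the iterated limit and then letting $\epsilon \to 0$ concludes the argument. The main obstacle is precisely this first quantity, since $f$ may vanish on a large set so that $\nu$ can be genuinely smaller than $\mu$; the resolution relies on the applicability of Theorem \ref{thm_asympt_bm}, whose hypothesis is met exactly because the intermediate metrics $h^L_i$ are continuous.
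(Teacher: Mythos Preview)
Your argument contains a genuine gap at the step where you invoke Theorem~\ref{thm_asympt_bm} with $h^L_0 := h^L_i$. The hypothesis of that theorem requires $(h^L_i)_\nu \geq h^L_i$, and you claim this is automatic from the definition of the envelope. It is not: since $h^L_i$ is a positive smooth metric, it has a psh potential, so $h^L_i$ itself is a contender in the infimum~(\ref{defn_nonnegl_env}), which gives the \emph{reverse} inequality $(h^L_i)_\nu \leq h^L_i$. Equality holds only in degenerate situations (essentially when the support of $\nu$ is all of $X$); in general the envelope is strictly smaller away from $\mathrm{supp}\,\nu$, by the argument of Proposition~\ref{prop_contact_set_closed}. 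Consequently the chain
\[
{\textrm{Hilb}}_k(h^L_i, \nu) \geq C_\epsilon \exp(-\epsilon k) \cdot {\textrm{Ban}}_k^\infty(X, h^L_i) \geq C_\epsilon \exp(-\epsilon k) \cdot {\textrm{Hilb}}_k(h^L_i, \mu)
\]
is false: a section can be small on $\mathrm{supp}\,\nu$ yet large elsewhere, making the left-hand side arbitrarily small relative to the sup norm. Since this lower bound is precisely what you use to control your ``first quantity'' $d_p({\textrm{Hilb}}_k(h^L_i,\nu), C\cdot{\textrm{Hilb}}_k(h^L_i,\mu))$, the decomposition collapses.

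The route that actually mirrors the proof of Theorem~\ref{thm_equiv_2} keeps the envelopes in play throughout. From Remark~\ref{rem_leb_env_id}.b one has $(h^L_i)_\nu = h^L_i$ $\nu$-a.e., hence ${\textrm{Hilb}}_k(h^L_i,\nu) = {\textrm{Hilb}}_k((h^L_i)_\nu,\nu) \leq C\,{\textrm{Hilb}}_k((h^L_i)_\nu,\mu)$; this is the correct upper bound replacing your use of $C\,{\textrm{Hilb}}_k(h^L_i,\mu)$. On the lower side, Theorem~\ref{thm_asympt_bm} is applied with a continuous $h^L_0 \leq (h^L_i)_\nu$ (not $h^L_0 = h^L_i$), yielding a sandwich between ${\textrm{Hilb}}_k(h^L_0,\mu)$ and ${\textrm{Hilb}}_k((h^L_i)_\nu,\mu)$. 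Lidskii then reduces everything to $d_p({\textrm{Hilb}}_k(h^L_0,\mu),{\textrm{Hilb}}_k((h^L)_\nu,\mu))$ with $h^L_0$ chosen to increase to $(h^L)_\nu$ as $i\to\infty$ (cf.\ the proof of the next proposition, where $(h^L_i)_\nu \nearrow (h^L)_\nu$ is verified). This last term is controlled by the approximability of $\mu$ for the metric $(h^L)_\nu$, which has a bounded psh potential by Proposition~\ref{prop_env_gen_fact}. The point is that both ends of the sandwich must live at the level of the $\nu$-envelope, not at the level of $h^L_i$ itself.
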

	\begin{prop}
		For a continuous metic $h^L$ on $L$, the following statements are equivalent: 
		\begin{enumerate}[a)]
			\item The measure $\mu$ is approximable for $h^L_{\mu}$.
			\item We have $\oplus_{k = 0}^{+ \infty} {\textrm{Hilb}}_k(h^L, \mu) \sim_p \oplus_{k = 0}^{+ \infty} {\textrm{Hilb}}_k(h^L_{\mu}, \chi)$, for a volume form $\chi$ on $X$.
		\end{enumerate}
	\end{prop}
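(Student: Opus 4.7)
The plan is to reduce both statements to asymptotic comparisons of Hermitian norms controlled by chaining Lidskii monotonicity through smooth approximants $h^L_i$, positive and smooth, increasing to $h^L_{\mu}$ by Demailly's regularization. As a preliminary, I would check that in both (a) and (b) the pairs of norms are exponentially comparable in the sense of Proposition \ref{prop_d_p_and_vol}.c): this follows from Theorem \ref{thm_asympt_bm} combined with the obvious bounds ${\textrm{Ban}}_k^{\infty}(X, h^L_i) \geq {\textrm{Hilb}}_k(h^L_i, \mu)$ and ${\textrm{Ban}}_k^{\infty}(X, h^L_i) \geq {\textrm{Hilb}}_k(h^L_i, \chi)$ valid since both $\mu$ and $\chi$ are probability measures. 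Hence both $\sim_p$ statements are independent of $p \in [1, +\infty[$, and I may focus on $p = 1$.

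I then introduce the intermediate condition
\[
(c): \qquad \oplus_{k} {\textrm{Hilb}}_k(h^L, \mu) \sim_1 \oplus_{k} {\textrm{Hilb}}_k(h^L_{\mu}, \mu).
\]
For the equivalence $(a) \Leftrightarrow (c)$, from \eqref{eq_bound_below_measure} with $h^L_0 := h^L_i$ and from $h^L \leq h^L_{\mu}$ $\mu$-a.e.\ on $K$ (Remark \ref{rem_leb_env_id}), one obtains the sandwich $C \exp(-\epsilon k) \, {\textrm{Hilb}}_k(h^L_i, \mu) \leq {\textrm{Hilb}}_k(h^L, \mu) \leq {\textrm{Hilb}}_k(h^L_{\mu}, \mu)$, together with the trivial ordering ${\textrm{Hilb}}_k(h^L_i, \mu) \leq {\textrm{Hilb}}_k(h^L_{\mu}, \mu)$. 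Combining the Lidskii monotonicity of $d_1$ on chained Hermitian norms \cite[Theorem 2.7]{DarvLuRub} with the scaling identity $d_1(\lambda H, H) = |\log \lambda|$ yields
\[
\Big| d_1\big({\textrm{Hilb}}_k(h^L, \mu), {\textrm{Hilb}}_k(h^L_{\mu}, \mu)\big) - d_1\big({\textrm{Hilb}}_k(h^L_i, \mu), {\textrm{Hilb}}_k(h^L_{\mu}, \mu)\big) \Big| \leq \epsilon k + O(1).
\]
Dividing by $k$ and taking $\limsup_k$, $\lim_i$, $\epsilon \to 0$ shows the two $\limsup$ quantities vanish simultaneously, giving $(a) \Leftrightarrow (c)$.

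For $(c) \Leftrightarrow (b)$, I would run the analogous strategy on the pair $({\textrm{Hilb}}_k(h^L_{\mu}, \mu), {\textrm{Hilb}}_k(h^L_{\mu}, \chi))$, using \eqref{eq_bound_below_measure} applied to the original continuous $h^L$ to produce the lower bound ${\textrm{Hilb}}_k(h^L_{\mu}, \mu) \geq {\textrm{Hilb}}_k(h^L, \mu) \geq C \exp(-\epsilon k) \, {\textrm{Ban}}_k^{\infty}(X, h^L_i)$, followed by the Tian-type Bernstein-Markov property ${\textrm{Ban}}_k^{\infty}(X, h^L_i) \sim_{\infty} {\textrm{Hilb}}_k(h^L_i, \chi)$ for the smooth positive $h^L_i$, and the monotonic bound ${\textrm{Hilb}}_k(h^L_i, \chi) \leq {\textrm{Hilb}}_k(h^L_{\mu}, \chi)$. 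On the upper side, the approximability $(a)$ (now at our disposal through its equivalence with $(c)$) aligns ${\textrm{Hilb}}_k(h^L_{\mu}, \mu)$ with ${\textrm{Hilb}}_k(h^L_i, \mu)$ in the $d_1$-sense, and via the chain ${\textrm{Hilb}}_k(h^L_i, \mu) \leq {\textrm{Ban}}_k^{\infty}(X, h^L_i) \sim_{\infty} {\textrm{Hilb}}_k(h^L_i, \chi)$ we bound it from above in terms of ${\textrm{Hilb}}_k(h^L_i, \chi)$. The Darvas-Lu-Rubinstein theorem \eqref{eq_berndt} with \eqref{eq_dist_conv} absorbs the $i$-dependence on the $\chi$-side, and a final Lidskii estimate closes the sandwich, reducing the identity to the approximability condition $(a) = (c)$.

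The main obstacle will be this second step: the pair appearing in $(b)$ is not directly ordered, so the Lidskii argument must be routed through both ${\textrm{Hilb}}_k(h^L_i, \mu)$ and ${\textrm{Hilb}}_k(h^L_i, \chi)$, and one must verify that the scaling errors collapse in the combined iterated limit $k \to \infty$, $i \to \infty$, $\epsilon \to 0$ exactly when approximability $(a)$ holds; this is the content of the equivalence, the role of $(a)$ being precisely to tighten the $\mu$-side of this sandwich so that it matches the Darvas-Lu-Rubinstein behaviour on the $\chi$-side.
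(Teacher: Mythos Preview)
Your route through the intermediate condition $(c)$ works cleanly for the implication $(a)\Rightarrow(b)$, and this part matches the paper's argument. The gap lies in the converse direction. Your displayed two–sided estimate
\[
\Big| d_1\big({\textrm{Hilb}}_k(h^L, \mu), {\textrm{Hilb}}_k(h^L_{\mu}, \mu)\big) - d_1\big({\textrm{Hilb}}_k(h^L_i, \mu), {\textrm{Hilb}}_k(h^L_{\mu}, \mu)\big) \Big| \leq \epsilon k + O(1)
\]
is only justified in one direction. The sandwich from \eqref{eq_bound_below_measure} gives ${\textrm{Hilb}}_k(h^L,\mu)\geq C\exp(-\epsilon k)\,{\textrm{Hilb}}_k(h^L_i,\mu)$, which via Lidskii yields $d_1({\textrm{Hilb}}_k(h^L,\mu),{\textrm{Hilb}}_k(h^L_{\mu},\mu))\leq d_1({\textrm{Hilb}}_k(h^L_i,\mu),{\textrm{Hilb}}_k(h^L_{\mu},\mu))+\epsilon k+O(1)$; but the reverse inequality would require a bound of the form ${\textrm{Hilb}}_k(h^L_i,\mu)\geq c\exp(-\epsilon' k)\,{\textrm{Hilb}}_k(h^L,\mu)$ with $\epsilon'\to 0$ as $i\to\infty$, and nothing in your sketch provides this. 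Consequently you only obtain $(a)\Rightarrow(c)$, not $(c)\Rightarrow(a)$, and your proof of $(c)\Leftrightarrow(b)$ then becomes circular because it invokes $(a)$ ``through its equivalence with $(c)$''.

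The missing lower bound is exactly what the paper supplies, and it does not come from a Lidskii manoeuvre but from a pluripotential input: one must apply Theorem~\ref{thm_asympt_bm} to the \emph{approximant} $h^L_i$, which requires a continuous minorant of its own non-negligible envelope $h^L_{i,\mu}$. The paper shows (via Proposition~\ref{prop_env_gen_fact} and a common refinement of the exceptional sets $E_i$) that $h^L_{i,\mu}$ increases to $h^L_{\mu}$ as $i\to\infty$; this allows choosing smooth positive $h^L_{0,i}\leq h^L_{i,\mu}$ with $h^L_{0,i}\nearrow h^L_{\mu}$, so that Theorem~\ref{thm_asympt_bm} yields ${\textrm{Hilb}}_k(h^L_i,\mu)\geq C\exp(-\epsilon k)\,{\textrm{Ban}}_k^{\infty}(h^L_{0,i})$. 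Combined with assumption $(b)$ and \eqref{eq_berndt}, \eqref{eq_dist_conv} on the $\chi$-side, this closes the sandwich and gives $(b)\Rightarrow(a)$. Without the convergence $h^L_{i,\mu}\to h^L_{\mu}$, your argument cannot be completed.
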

	\begin{proof}
		The implication $a) \Rightarrow b)$ follows from (\ref{eq_bound_below_measure}) by the same argument as after (\ref{eq_dp_hilb_lebenv2}).
		\par 
		Let us now establish the implication $b) \Rightarrow a)$.
		Let $h^L_i$ be a sequence of metrics on $L$ as in Definition \ref{defn_approx}.
		Remark first that the sequence of non-negligible psh envelopes $h^L_{i, \mu}$, associated with $\mu$ and $h^L_i$, increases towards $h^L_{\mu}$.
		To see this, it is immediate that $h^L_{i, \mu}$ increase in $i \in \nat$, and we always have $h^L_{i, \mu} \leq h^L_{\mu}$.
		Hence, by (\ref{eq_phi_mu_form}), it is enough to establish that $\lim_{i \to \infty} h^L_{i, \mu} \geq h^L_{K \setminus E}$ for a certain $E \subset K$, verifying $\mu(E) = 0$.
		By Proposition \ref{prop_env_gen_fact}, we consider $E_i \subset K$, $\mu(E_i) = 0$, so that $h^L_{i, \mu}$ coincides with the psh envelope $h^L_{i, K \setminus E_i}$ associated with $h^L_i$ and $K \setminus E_i$.
		We let $E = \cup E_i$. We trivially have $\mu(E) = 0$, and by Remark \ref{rem_leb_env_id}, we also deduce $h^L_{i, \mu} = h^L_{i, K \setminus E}$.
		However, it is standard, cf. \cite[Proposition 2.2.2]{GuedjLuZeriahEnv}, that $h^L_{i, K \setminus E}$ increase towards $h^L_{K \setminus E}$, which finishes the proof.
		\par 
		It is then possible to chose a sequence of smooth positive metrics $h^L_{0, i}$ on $L$ such that for any $i \in \nat$, we have $h^L_{i, \mu} \geq h^L_{0, i}$, and $h^L_{0, i}$ increase towards $h^L_{\mu}$.
		By using Theorem \ref{thm_asympt_bm}, $b)$ and the Lidskii inequality as in (\ref{eq_dp_hilb_lebenv}), we establish that for an arbitrary volume form $\chi$ on $X$, we have
		\begin{equation}\label{eq_approx_defn2}
			\limsup_{k \to \infty}
			\frac{1}{k}
			d_p
			\Big(
				{\textrm{Hilb}}_k(h^L, \mu), {\textrm{Hilb}}_k(h^L_i, \mu)
			\Big)
			\leq
			\limsup_{k \to \infty}
			\frac{1}{k}
			d_p
			\Big(
				{\textrm{Hilb}}_k(h^L_\mu, \chi), {\textrm{Hilb}}_k(h^L_{i, 0}, \chi)
			\Big).
		\end{equation}	
		We deduce $a)$ from (\ref{eq_berndt}), (\ref{eq_dist_conv}), (\ref{eq_approx_defn2}) and the fact that $h^L_{i, 0}$ increases towards $h^L_{\mu}$.
	\end{proof}
	\par 
	Recall that a regular envelope, $Q(\cdot)$, was defined in (\ref{eq_reg_env}).
	\begin{prop}
		A non-pluripolar Borel probability measure $\mu$ on $X$ is approximable for any bounded metric $h^L$, verifying $Q(h^L_{\mu})_* = h^L_{\mu}$.
		In particular, if $h^L$ is $\mu$-pluriregular, then $\mu$ is approximable for $h^L$.
	\end{prop}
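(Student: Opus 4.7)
I would follow the template of the previous proposition's $b)\Rightarrow a)$ proof, using the $Q$ hypothesis to supply a key asymptotic equivalence between $\mu$-weighted and $\chi$-weighted $L^2$-norms. \textbf{Setup.} Fix a sequence of smooth positive metrics $h^L_i$ on $L$ increasing pointwise to $h^L$ via Demailly regularization (applicable since $h^L$ has bounded psh potential); by the Remark after Definition \ref{defn_approx}, it suffices to control $d_1$. Because $h^L$ has a psh potential, it is itself a contender for the infimum in (\ref{defn_nonnegl_env}), so by Remark \ref{rem_leb_env_id}(b) one has $h^L = h^L_\mu$ $\mu$-almost everywhere on $K$; in particular
\begin{equation*}
{\textrm{Hilb}}_k(h^L, \mu) = {\textrm{Hilb}}_k(h^L_\mu, \mu), \qquad {\textrm{Hilb}}_k(h^L_i, \mu) = {\textrm{Hilb}}_k((h^L_i)_\mu, \mu).
\end{equation*}
Repeating the argument in the preceding proposition's proof yields that the envelopes $(h^L_i)_\mu$ increase pointwise to $h^L_\mu$ as $i \to \infty$.

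\textbf{Role of the hypothesis.} The condition $Q(h^L_\mu)_* = h^L_\mu$ asserts precisely that $h^L_\mu$ is \emph{regularizable from above} by continuous psh metrics in the sense of \cite{BouckErik21} and \cite{FinNarSim}. Mimicking the proof of Proposition \ref{thm_bbwn_determining_new}---with $h^L_K$ and $h^L_{K*}$ replaced respectively by $Q(h^L_\mu)$ and $h^L_\mu$, and using \cite[Corollary 2.21 and Proposition 2.18]{FinNarSim}---one derives $\oplus_{k} {\textrm{Ban}}_k^\infty(h^L_\mu) \sim_p \oplus_{k} {\textrm{Hilb}}_k(h^L_\mu, \chi)$ for every $p \in [1, +\infty[$. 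Combining this with the trivial upper bound ${\textrm{Hilb}}_k(h^L_\mu, \mu) \leq {\textrm{Ban}}_k^\infty(K, h^L_\mu) \leq {\textrm{Ban}}_k^\infty(h^L_\mu)$ and the matching lower bound from Theorem \ref{thm_asympt_bm}, taking continuous $h^L_0 \leq h^L_\mu$ close to $h^L_\mu$ in Darvas distance (furnished by Demailly regularization of $h^L_\mu$) and using the trivial inequality ${\textrm{Ban}}_k^\infty(X, h^L_0) \geq {\textrm{Hilb}}_k(h^L_0, \chi)$, one obtains the crucial equivalence
\begin{equation*}
\bigoplus_{k} {\textrm{Hilb}}_k(h^L, \mu) \sim_p \bigoplus_{k} {\textrm{Hilb}}_k(h^L_\mu, \chi), \qquad p \in [1, +\infty[.
\end{equation*}

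\textbf{Main estimate and conclusion.} A diagonal Demailly argument applied to the sequence $(h^L_i)_\mu$, exploiting the regularizability of $h^L_\mu$, produces smooth positive metrics $h^L_{0,i} \leq (h^L_i)_\mu$ converging to $h^L_\mu$ in Darvas distance. Theorem \ref{thm_asympt_bm} applied to the metric $h^L_i$ (whose envelope $(h^L_i)_\mu$ dominates $h^L_{0,i}$) then gives ${\textrm{Hilb}}_k(h^L_i, \mu) \geq C \exp(-\epsilon k)\, {\textrm{Hilb}}_k(h^L_{0,i}, \chi)$, while ${\textrm{Hilb}}_k(h^L_i, \mu) \leq {\textrm{Hilb}}_k(h^L, \mu)$ is trivial. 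Combining with Lidskii's inequality, the triangle inequality for $d_p$, the crucial equivalence above, and (\ref{eq_berndt}), (\ref{eq_dist_conv}) exactly as in the derivation of (\ref{eq_dp_hilb_lebenv_modif}) yields
\begin{equation*}
\limsup_k \tfrac{1}{k} d_p\big({\textrm{Hilb}}_k(h^L, \mu), {\textrm{Hilb}}_k(h^L_i, \mu)\big) \leq \epsilon + d_p(h^L_\mu, h^L_{0,i}).
\end{equation*}
Letting $i \to \infty$ and then $\epsilon \to 0$ concludes the approximability of $\mu$ for $h^L$. The "in particular" claim is immediate: $\mu$-pluriregularity of $h^L$ makes $h^L_\mu$ continuous, whence $Q(h^L_\mu) = h^L_\mu$ and the hypothesis $Q(h^L_\mu)_* = h^L_\mu$ is automatic.

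The main technical obstacle is the second step: establishing the $\sim_p$ equivalence between ${\textrm{Hilb}}_k(h^L, \mu)$ and ${\textrm{Hilb}}_k(h^L_\mu, \chi)$. This is exactly where the hypothesis $Q(h^L_\mu)_* = h^L_\mu$ is indispensable, as it permits one to transfer the $\mu$-weighted $L^2$-norm of the psh metric $h^L_\mu$ to the $\chi$-weighted $L^2$-norm via Bernstein-Markov--type arguments that require regularizability of $h^L_\mu$ itself, rather than of a continuous majorant.
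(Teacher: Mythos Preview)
Your proposal is correct and follows essentially the same route as the paper. Both arguments hinge on the same two ingredients: the trivial upper bound ${\textrm{Ban}}_k^\infty(h^L_\mu) \geq {\textrm{Hilb}}_k(h^L,\mu)$, and the $p$-equivalence $\oplus_k {\textrm{Ban}}_k^\infty(h^L_\mu) \sim_p \oplus_k {\textrm{Hilb}}_k(h^L_\mu,\chi)$ furnished by regularizability from above via \cite[Proposition 2.18]{FinNarSim}; these are then combined with Theorem \ref{thm_asympt_bm}, Lidskii, and the Demailly approximants $h^L_{0,i}$ exactly as in the previous proposition. One organizational remark: once you have established the ``crucial equivalence'' $\oplus_k {\textrm{Hilb}}_k(h^L,\mu) \sim_p \oplus_k {\textrm{Hilb}}_k(h^L_\mu,\chi)$, you are already done by the implication $b)\Rightarrow a)$ of the preceding proposition, so your entire ``Main estimate and conclusion'' paragraph is redundant --- the paper avoids this by never isolating the full equivalence and instead plugging the upper bound and the ${\textrm{Ban}}^\infty$-equivalence directly into the analogue of (\ref{eq_approx_defn2}).
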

	\begin{proof}
		From (\ref{eq_bound_below_measure}), we immediately have 
		\begin{equation}\label{eq_linf_bnd_triv}
			{\textrm{Ban}}_k^{\infty}(h^L_{\mu})
			\geq 
			{\textrm{Hilb}}_k(h^L, \mu)
		\end{equation}
		The condition $Q(h^L_{\mu})_* = h^L_{\mu}$ means precisely that $h^L_{\mu}$ is regularizable from above in the notations of \cite{BedfordRegulBelow}, \cite{BouckErik21}, \cite{FinNarSim}.
		Hence, by \cite[Proposition 2.18]{FinNarSim}, for an arbitrary volume form $\chi$ on $X$, the norm $\oplus_{k = 0}^{+\infty} {\textrm{Ban}}_k^{\infty}(h^L_{\mu})$ is $p$-equivalent to $\oplus_{k = 0}^{+\infty} {\textrm{Hilb}}_k(h^L_{\mu}, \chi)$ for any $p \in [1, +\infty[$.
		From this, (\ref{eq_linf_bnd_triv}) and the Lidskii inequality, we deduce that the analogue of (\ref{eq_approx_defn2}) holds.
		The proof then follows by the same argument as after (\ref{eq_approx_defn2}).
	\end{proof}

	\section{Mabuchi geometry and the contact set}\label{sect_mabuch}
	The main goal of this section is to recall the basics of Mabuchi geometry and to establish Propositions \ref{prop_leb_negl} and \ref{prop_local_geod}.
	Throughout the section, we denote by $\mathbb{D}(a, b)$ the complex annulus with inner radius $a$ and outer radius $b$, and by $\pi : X \times \mathbb{D}(e^{-1}, 1) \to X$ and $z : X \times \mathbb{D}(e^{-1}, 1) \to \mathbb{D}(e^{-1}, 1)$ the usual projections.
	\par 
	Mabuchi in \cite{Mabuchi} introduced a certain metric on the space of positive metrics on $L$, the geodesics of which admit the description as solutions to a certain homogeneous Monge-Ampère equation.
	To recall this, upon fixing a reference positive metric $h^L_*$ on $L$, we identify the space of positive metrics on $L$ with the space of Kähler potentials of $\omega := 2 \pi c_1(L, h^L_*)$ as in (\ref{eq_pot_metr_corr}).
	On the space of Kähler potentials $\xi : X \to \real$ of $\omega$, for $p \in [1, +\infty[$, we introduce the following Finsler metrics
	\begin{equation}\label{eq_finsl_dist_fir}
		\| \xi \|_p^u
		:=
		\sqrt[p]{
		\frac{1}{\int_X \omega^n}
		 \int_X |\xi(x)|^p \cdot \omega_u^n(x)}.
	\end{equation}
	The path length metric structure associated with (\ref{eq_finsl_dist_fir}) on the space of Kähler potentials was introduced by Mabuchi \cite{Mabuchi} for $p = 2$, and by Darvas \cite{DarvasFinEnerg} for any $p \in [1, +\infty[$.
	\par 
	To describe the geodesics in this space, we identify paths $u_t$, $t \in [0, 1]$, of Kähler potentials with rotationally-invariant $\hat{u} : X \times \mathbb{D}(e^{-1}, 1) \to \real$, as follows
	\begin{equation}\label{eq_defn_hat_u}
		\hat{u}(x, \tau) = u_{t}(x), \quad \text{where} \quad x \in X \, \text{ and } \, t = - \log |\tau|.
	\end{equation}
	According to \cite{Semmes}, \cite{DonaldSymSp} smooth geodesic segments in Mabuchi space can be described as the only path of Kähler potentials $u_t$, $t \in [0, 1]$, connecting $u_0$ to $u_1$, so that $\hat{u}$ is the solution of the Dirichlet problem associated with the homogeneous Monge-Ampère equation
	\begin{equation}\label{eq_ma_geod}
		(\pi^* \omega + \imun \partial \dbar \hat{u})^{n + 1} = 0,
	\end{equation}
	with boundary conditions $\hat{u}(x, e^{\imun \theta}) = u_0(x)$, $\hat{u}(x, e^{-1 + \imun \theta}) = u_1(x)$, $x \in X, \theta \in [0, 2\pi]$.
	By the work of X. Chen \cite{ChenGeodMab} and later compliments by B{\l}ocki \cite{BlockiGeod} and Chu-Tosatti-Weinkove \cite{ChuTossVeinC11}, we now know that $\mathscr{C}^{1, 1}$ solutions to (\ref{eq_ma_geod}) always exist.
	\par 
	Berndtsson in \cite[\S 2.2]{BernBrunnMink} proved that for $u_0, u_1 \in {\rm{PSH}}(X, \omega) \cap L^{\infty}(X)$,  \textit{weak solutions} to (\ref{eq_ma_geod}) exist, i.e. (\ref{eq_ma_geod}) has solutions when the wedge power is interpreted in Bedford-Taylor sense \cite{BedfordTaylor} and the boundary conditions mean that $\sup_X | u_{\epsilon} - u_0 | \to 0$ and $\sup_X | u_{1 - \epsilon} - u_1 | \to 0$, as $\epsilon \to 0$.
	From \cite[(2.1)]{BernBrunnMink}, the solution $\hat{u}(u_0, u_1)$ can be described through the following envelope construction:
	\begin{multline}\label{eq_env_geod}
		\hat{u}(u_0, u_1) := \sup \Big\{
		 	\hat{u} \in {\rm{PSH}}(X \times \mathbb{D}(e^{-1}, 1), \pi^* \omega) : \hat{u} \text{ is $S^1$-invariant}
		 	\\ \text{and } \lim_{t \to 0} \hat{u}(x, e^{-t}) \leq u_0, \lim_{t \to 1} \hat{u}(x, e^{-t}) \leq u_1 
		 \Big\}.
	\end{multline}
	\par 
	For the corresponding path $u_t$, $t \in [0, 1]$, we then have $u_t \in {\rm{PSH}}(X, \omega) \cap L^{\infty}(X)$.
	Since $u_t(x)$ is convex in $t \in [0, 1]$ for any $x \in X$, cf. \cite[Theorem I.5.13]{DemCompl}, and $u_t(x)$ is continuous at $t = 0, 1$ by \cite[p. 7]{BernBrunnMink} (and automatically at $t \in ]0, 1[$ by convexity) the derivative at $t = 0$, which we denote by $\dot{u}_0$, is well-defined, and from \cite[\S 2.2]{BernBrunnMink}, we know that $\dot{u}_0$ is bounded.
	\begin{prop}\label{prop_mab_contact_set}
		Assume that for $u_i \in {\rm{PSH}}(X, \omega) \cap L^{\infty}(X)$, $i = 1, 2$, we have $u_0 \leq u_1$.
		Then $\dot{u}_0 \geq 0$.
		Also, if for a given $x \in X$, we have $u_0(x) = u_1(x)$, then $\dot{u}_0(x) = 0$.
		Inversely, if $u_0$ is strictly $\omega$-psh, and for a given $x \in X$, there is $\epsilon > 0$ and an open neighborhood $U$ of $x$, so that for any $y \in U$, we have $u_0(y) \leq u_1(y) - \epsilon$, then we have $\dot{u}_0(x) > 0$.
	\end{prop}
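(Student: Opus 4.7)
My plan is to treat the three assertions in turn, using the envelope description (\ref{eq_env_geod}) and convexity for the first two, and an explicit competitor construction for the third.

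For the first claim, I observe that the constant $S^1$-invariant function $\hat{v}(y,\tau) := u_0(y)$, being pulled back from the $\omega$-psh function $u_0$, is $\pi^*\omega$-plurisubharmonic and satisfies $\lim_{t\to 0}\hat{v}(y,e^{-t}) = u_0(y)$ together with $\lim_{t\to 1}\hat{v}(y,e^{-t}) = u_0(y) \leq u_1(y)$; hence it is a competitor in (\ref{eq_env_geod}). Consequently $u_t(y)\geq u_0(y)$ for all $t\in[0,1]$, and $\dot{u}_0(x)\geq 0$ follows on dividing by $t>0$ and letting $t\to 0^+$. For the second claim, the convexity of $t\mapsto u_t(x)$ on $[0,1]$ yields $u_t(x)\leq (1-t)u_0(x)+t u_1(x)$, which collapses to $u_t(x)\leq u_0(x)$ when $u_0(x)=u_1(x)$; combined with the lower bound just established, this forces $u_t(x) = u_0(x)$ for all $t$, whence $\dot{u}_0(x)=0$.

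The third claim is the substantial one. I would fix a non-negative smooth cutoff $\chi$ on $X$ with support in $U$ and $\chi(x)>0$, and, for parameters $a,\alpha>0$ to be tuned, set
\begin{equation*}
g_{a,\alpha}(y,\tau) := u_0(y) - a\chi(y)\log|\tau| + \alpha(|\tau|^2-1), \qquad \hat{w}(y,\tau) := \max\bigl(u_0(y),\,g_{a,\alpha}(y,\tau)\bigr).
\end{equation*}
This $\hat{w}$ is manifestly $S^1$-invariant. At $|\tau|=1$, $g_{a,\alpha}(y,\tau) = u_0(y)$, so $\hat{w} = u_0$; at $|\tau|=e^{-1}$, $g_{a,\alpha}(y,\tau) = u_0(y)+a\chi(y)+\alpha(e^{-2}-1)$, which is dominated by $u_1(y)$ on $U$ provided $a\|\chi\|_\infty\leq \epsilon$ (using $u_0\leq u_1-\epsilon$ on $U$) and trivially elsewhere, so $\hat{w}\leq u_1$ at $|\tau|=e^{-1}$. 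Hence $\hat{w}$ will satisfy the boundary conditions of (\ref{eq_env_geod}) as soon as it is $\pi^*\omega$-plurisubharmonic.

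The delicate point, and the main obstacle of the argument, is verifying that $g_{a,\alpha}$ is $\pi^*\omega$-psh (after which the maximum with $u_0$ is automatic). In block form relative to the splitting into $y$-directions and the $\tau$-direction, the matrix representing $\pi^*\omega + \imun\partial\dbar g_{a,\alpha}$ has $(y,y)$-block $\omega + \imun\partial\dbar u_0 + a(-\log|\tau|)\,\imun\partial\dbar\chi$, which dominates $(\delta/2)\omega$ once $a$ is small relative to the strict $\omega$-pshness margin $\delta$ of $u_0$; $(\tau,\tau)$-block equal to $\alpha$, produced by $\imun\partial\dbar(\alpha|\tau|^2)$; and mixed entries of order $a|\partial_y\chi|/|\tau|$ coming from $\imun\partial\dbar(-a\chi(y)\log|\tau|)$. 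The naive attempt with $\alpha=0$ fails precisely because these mixed entries have no $(\tau,\tau)$-partner to absorb them. With $\alpha>0$, Schur complement succeeds as soon as $\alpha \geq Ca^2$ for a constant $C$ depending only on $\chi$, $\omega$ and $\delta$. Fixing such $(a,\alpha)$, $\hat{w}$ becomes a legitimate competitor; substituting $y=x$, $\tau=e^{-t}$ and using $e^{-2t}-1 = -2t+O(t^2)$ yields
\begin{equation*}
u_t(x)-u_0(x) \,\geq\, \hat{w}(x,e^{-t})-u_0(x) \,\geq\, (a\chi(x)-2\alpha)\,t + o(t),
\end{equation*}
so $\dot{u}_0(x)\geq a\chi(x)-2Ca^2>0$ for $a$ sufficiently small. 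The whole construction hinges on the crucial fact that the required $\alpha$ grows only quadratically in $a$, while the slope thus generated is linear in $a$.
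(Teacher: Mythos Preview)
Your proof is correct and follows essentially the same strategy as the paper. The first two claims are handled identically (constant competitor for $\dot u_0\geq 0$, convexity for the upper bound $\dot u_0\leq u_1-u_0$), and for the third claim both arguments build a competitor of the form $u_0(y)-a\chi(y)\log|\tau|$ plus a radial strictly psh correction proportional to $|\tau|^2$, with the key observation that Schur complement forces the radial coefficient to be at least quadratic in $a$ while the slope gained is linear in $a$.

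The only cosmetic differences are: the paper fixes the scaling $a=\epsilon^2$, $\alpha=\epsilon^3$ from the outset and adds a harmless pluriharmonic term $\epsilon^3 g(1)\log|\tau|$ so that the boundary value at $|\tau|=e^{-1}$ equals $u_0+\epsilon^2\rho$ exactly; and the paper does not take the maximum with $u_0$, which in your construction is unnecessary (your $g_{a,\alpha}$ already satisfies both boundary inequalities) but harmless.
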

	\begin{proof}
		Remark first that by our condition $u_0 \leq u_1$, $\hat{u}(x, \tau) := u_0(x)$, $\tau \in \mathbb{D}(e^{-1}, 1)$, $x \in X$, is one of the contenders in (\ref{eq_env_geod}).
		Hence, we deduce that the Mabuchi geodesic $u_t$, $t \in [0, 1]$, between $u_0$ and $u_1$ satisfies $u_t \geq u_0$ for any $t \in [0, 1]$.
		Directly from this, we get $\dot{u}_0 \geq 0$.
		\par 
		Now, by the convexity of $u_t$ in $t$, we deduce that 
		\begin{equation}\label{eq_conv_geod}
			u_t \leq (1 - t)u_0 + t u_1,
		\end{equation}
		which implies that $\dot{u}_0 \leq u_1 - u_0$.
		Directly from this, we get the second statement.
		\par 
		To establish the third statement, we proceed as follows.
		Let $\rho : X \to [0, 1]$ be an arbitrary non-negative function with support inside of $U$, such that $\rho(x) = 1$ for the point $x$ as in the statement of Proposition \ref{prop_mab_contact_set}.
		Let $g : [0, 1] \to \real$, be an arbitrary function such that $\tau \mapsto g(- \log |\tau|)$, $\tau \in \mathbb{D}(e^{-1}, 1)$, is strictly psh on $\mathbb{D}(e^{-1}, 1)$ and verifies $g(0) = 0$ (e.g. $g(t) = \exp(-2 t) - 1$).
		We now verify that there is $\epsilon_0 > 0$, such that for any $0 < \epsilon < \epsilon_0$, the function
		\begin{equation}\label{eq_bump_const}
			\hat{u}(x, \tau) 
			:=
			u_0(x)
			-
			\epsilon^2 \cdot \rho(x) \cdot \log |\tau| + \epsilon^3 g(- \log |\tau|) + \epsilon^3 \cdot \log |\tau| \cdot g(1),
		\end{equation}
		is one of the contenders in (\ref{eq_env_geod}).
		We first verify that $\hat{u} \in {\rm{PSH}}(X \times \mathbb{D}(e^{-1}, 1), \pi^* \omega)$. 
		Since the first and the two last terms in (\ref{eq_bump_const}) are $\pi^* \omega$-psh, it is enough to verify that $\hat{u}$ is psh in $U \times \mathbb{D}(e^{-1}, 1)$, where the support of the second term is localized.
		By writing in a local chart and using the fact that $u_0$ is strictly psh, we see that the fact that $\hat{u}$ is psh for $\epsilon > 0$ small enough amounts essentially to the fact that for any $C \in \real$, there is $\epsilon_0 > 0$, such that for any $0 < \epsilon < \epsilon_0$, the form $\imun dz \wedge d \overline{z} + C \epsilon^2 \imun dz \wedge d \overline{\tau} - C \epsilon^2 \imun d\overline{z} \wedge d \tau + \epsilon^3 \imun d \tau \wedge d \overline{\tau}$ is positive, which is a trivial verification.
		Let us now verify that the boundary conditions from (\ref{eq_env_geod}) are satisfied.
		We have $\lim_{t \to 0} \hat{u}(x, e^{-t}) \leq u_0$ by the assumption $g(0) = 0$.
		Moreover, since the sum of the last two terms in (\ref{eq_bump_const}) is zero for $|\tau| = e^{-1}$, we have $\lim_{t \to 1} \hat{u}(x, e^{-t}) = u_0 + \epsilon^2 \cdot \rho(z)$.
		But since $\rho$ has support in $U$, for $\epsilon > 0$ small enough $u_0 + \epsilon^2 \cdot \rho(z) \leq u_1$ by our standing assumption.
		\par 
		From the above, we get $\hat{u}(u_0, u_1) \geq \hat{u}$.
		By choosing $\epsilon > 0$ small enough and comparing the derivatives at $t = 0$ of the respective paths of metrics, we deduce that $\dot{u}_0(x) > 0$ as required.
	\end{proof}
	
	\par 
	To apply this in the setting related with psh envelopes, we need the following result.
	\begin{prop}\label{prop_contact_set_closed}
		For any point $x \in X$ outside the closure of a non-pluripolar subset $E$ and any metric $h^L$ on $L$ with strictly psh potential, there is neighborhood $U$ of $x$ and $\epsilon > 0$, such that over $U$, we have $h^L \geq \exp(\epsilon) \cdot h^L_E$.
	\end{prop}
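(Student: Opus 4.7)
The plan is to pass to potentials and exhibit a direct competitor in the envelope supremum defining $h^L_E$. Fix a reference positive metric $h^L_*$ on $L$, set $\omega := 2 \pi c_1(L, h^L_*)$, and denote by $\phi$, $\phi_E$ the potentials of $h^L$, $h^L_E$ respectively, as in (\ref{eq_pot_metr_corr}). Under this identification, the desired inequality $h^L \geq \exp(\epsilon) \cdot h^L_E$ on $U$ is equivalent to $\phi_E \geq \phi + \epsilon$ on $U$. By (\ref{eq_phi_mu_form1}), $\phi_E$ equals the supremum over $\psi \in {\rm{PSH}}(X, \omega)$ satisfying $\psi \leq \phi$ on $E$, so it is enough to produce a single such $\psi$ with $\psi \geq \phi + \epsilon$ on a neighborhood of $x$.

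The construction is a standard bump argument. Since $x$ lies outside the closed set $\overline{E}$, I would pick an open neighborhood $V \ni x$ with $\overline{V} \cap \overline{E} = \emptyset$, and a smooth cutoff $\rho : X \to [0, 1]$ compactly supported in $V$, identically equal to $1$ on a smaller open neighborhood $U$ of $x$. Setting $\psi := \phi + \epsilon \rho$ for $\epsilon > 0$ to be fixed below, one has $\rho \equiv 0$ on $E$, hence $\psi = \phi$ on $E$, while $\psi = \phi + \epsilon$ on $U$. The only nontrivial point is to ensure $\psi \in {\rm{PSH}}(X, \omega)$, and this is exactly where the strict $\omega$-plurisubharmonicity of $\phi$ is used: by hypothesis there is $\delta > 0$ with $\omega + \imun \partial \dbar \phi \geq \delta \omega$. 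By compactness of $X$ and smoothness of $\rho$, the form $\imun \partial \dbar \rho$ is bounded, so for $\epsilon > 0$ small enough one has $\epsilon \cdot \imun \partial \dbar \rho \geq - \delta \omega$ pointwise on $X$. Adding these two inequalities gives $\omega + \imun \partial \dbar \psi \geq 0$, so $\psi \in {\rm{PSH}}(X, \omega)$ is a valid competitor and yields $\phi_E \geq \psi \geq \phi + \epsilon$ on $U$.

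There is essentially no obstacle in this argument; the whole content is captured by the strict positivity hypothesis on $\phi$, which provides precisely the room needed to glue in an $\epsilon$-bump supported in the open set $X \setminus \overline{E}$ without spoiling the $\omega$-psh property. The hypothesis that $E$ is non-pluripolar is not explicitly invoked in the construction itself, but enters implicitly as the standing assumption ensuring that $h^L_E$ is a well-defined metric with bounded potential so that the envelope description (\ref{eq_phi_mu_form1}) is meaningful.
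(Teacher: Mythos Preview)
Your proof is correct and follows essentially the same approach as the paper: both construct the competitor $\phi + \epsilon \rho$ (equivalently, the metric $h^L \cdot \exp(-\epsilon \rho)$) using a bump function $\rho$ supported away from $\overline{E}$, and use strict $\omega$-plurisubharmonicity of $\phi$ to absorb the perturbation for small $\epsilon$. Your version is in fact slightly more carefully written, as you make explicit that $\rho \equiv 1$ on a smaller neighborhood $U$ of $x$, which is what gives the inequality on all of $U$ rather than just at the point $x$.
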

	\begin{proof}
		Let $\rho$ be an arbitrary non-negative function with support inside of $U$, such that $\rho(x) = 1$ for the point $x$ as in the statement.
		Since $h^L$ has a strictly psh potential, there is $\epsilon > 0$, small enough so that $h^L \cdot \exp(- \epsilon \rho)$ has a psh potential.
		Directly from the definition of $h^L_E$, we then deduce $h^L(x) \geq \exp(\epsilon) \cdot h^L_E(x)$.
	\end{proof}
	\begin{sloppypar}
	\begin{proof}[Proof of Proposition \ref{prop_leb_negl}.]
		Directly from Propositions \ref{prop_mab_contact_set}, \ref{prop_contact_set_closed} and Remark \ref{rem_leb_env_id}, we deduce that $\phi(h^L, NZ)$ is non-positive, $\phi(h^L, NZ) = 0$ over the set $d(NZ)$ of density points of $NZ$ and $\phi(h^L, NZ) < 0$ over $X \setminus K$.
		By this, the Lebesgue's density theorem and the assumption $\lambda(Z) = 0$, we deduce Proposition \ref{prop_leb_negl}.
	\end{proof}
	\end{sloppypar}
	\begin{proof}[Proof of Proposition \ref{prop_local_geod}]
		From Propositions \ref{prop_contact_set_closed} and (\ref{eq_conv_geod}), we conclude that the Mabuchi geodesic  $h^L_t$, (resp. $h^{L'}_t$), $t \in [0, 1]$, between $h^L_0 := h^L$ and $h^L_1 := h^L_{{\rm{Leb}}, NZ}$ (resp. and $h^{L'}_0 := h^{L'}$ and $h^{L'}_1 := h^{L'}_{{\rm{Leb}}, K'}$) verify $h^L_t = h^L$ away from $U$, (resp. $h^{L'}_t = h^L$ away from $U'$).
		By (\ref{eq_env_geod}), we see then directly that $p^* h^{L'}_t = h^L_t$, which implies immediately Proposition \ref{prop_local_geod}. 
	\end{proof}
	
	\section{Growth of balls of holomorphic sections and Lebesgue envelopes}\label{sect_growth_balls}
	The main goal of this section is to apply Theorem \ref{thm_equiv_2} to study the growth of balls of holomorphic sections associated with $L^2$-norms supported on measurable subsets.
	\par 
	For this, we recall that the \textit{Monge-Ampère energy functional} $\mathscr{E}$, defined on the space of Kähler potentials $u : X \to \real$, associated with a Kähler form $\omega$ on $X$, is the unique functional (up to a constant) that for any Kähler potentials $u, v$ satisfies the following equation:
	\begin{equation}
		\frac{d}{dt}
		\mathscr{E}((1 - t) u + t v)|_{t = 0}
		=
		\frac{
		\int_X (v - u) \cdot \omega_u^n}{\int_X \omega^n}.
	\end{equation}
	While $\mathscr{E}$ is only well-defined up to a constant, the differences $\mathscr{E}(u) - \mathscr{E}(v)$ are well-defined and can be explicitly evaluated as follows
	\begin{equation}\label{eq_energy}
		\mathscr{E}(u) - \mathscr{E}(v)
		=
		\frac{1}{(n + 1) \int_X \omega^n}
		\sum_{j = 0}^{n} \int_X (u - v) w_u^j \wedge w_v^{n - j}.
	\end{equation}
	By \cite[Proposition 10.14]{GuedjZeriahBook}, $\mathscr{E}$ is monotonic, i.e. for any $u \leq v$, we have $\mathscr{E}(u) \leq \mathscr{E}(v)$.
	From this and Demailly regulartization theorem \cite{DemRegul}, it is reasonable to extend the domain of the definition of $\mathscr{E}$ to ${\rm{PSH}}(X, \omega)$ as
	\begin{equation}\label{eq_energy_ext}
		\mathscr{E}(u) :=
		\inf 
		\Big \{ 
		 \mathscr{E}(v) : v \text{ is a $\omega$-Kähler potential, verifying } u \leq v
		\Big \}.
	\end{equation}
	Remark, that $\mathscr{E}$ can take the value $- \infty$ on non-smooth elements of ${\rm{PSH}}(X, \omega)$, but by monotonicity, it takes finite values on ${\rm{PSH}}(X, \omega) \cap L^{\infty}(X)$, see also \cite{DarvasFinEnerg} for a more precise result.
	Below, we extend the definition of $\mathscr{E}$ to metrics on $L$ with bounded psh potentials by applying the energy functional to the potential of the metric.
	\par 
	\begin{thm}\label{thm_balls}
		For any continuous metric $h^L$ on $L$ with a psh potential, and an arbitrary measurable subset $A \subset X$, which is Lebesgue non-negligible, the following identity holds
		\begin{equation}\label{eq_balls}
			\lim_{k \to \infty} \frac{\log {\rm{vol}} ({\textrm{Hilb}}_k(A, h^L)) - \log {\rm{vol}} ({\textrm{Hilb}}_k(h^L))}{k \cdot \dim H^0(X, L^{\otimes k})} 
			=
			\mathscr{E}(h^L_{{\rm{Leb}}, A})
			-
			\mathscr{E}(h^L).
		\end{equation}
	\end{thm}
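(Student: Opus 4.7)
The plan is to combine Theorem~\ref{thm_log2} with the standard interpretation of log-volume differences of Hilbert norms as traces of transfer operators. Setting $f := 1_A$, the elementary identity
\[
\langle T_k(1_A) s_1, s_2\rangle_{{\textrm{Hilb}}_k(h^L)} = \langle s_1, s_2\rangle_{{\textrm{Hilb}}_k(A, h^L)}, \qquad s_1, s_2 \in H^0(X, L^{\otimes k}),
\]
obtained exactly as in the proof of Theorem~\ref{thm_log2}, identifies $-\log T_k(1_A)$ with the transfer map (in the sense of Section~\ref{sect_transf}) from ${\textrm{Hilb}}_k(h^L)$ to ${\textrm{Hilb}}_k(A, h^L)$. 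Since $A$ is Lebesgue non-negligible, hence non-pluripolar, $T_k(1_A)$ is strictly positive definite and the logarithm is well defined. A direct diagonalization in a joint orthonormal basis of the two Hilbert norms then yields
\[
\log{\rm{vol}}({\textrm{Hilb}}_k(A, h^L)) - \log{\rm{vol}}({\textrm{Hilb}}_k(h^L)) = -\operatorname{Tr}\bigl(\log T_k(1_A)\bigr).
\]

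Next, Theorem~\ref{thm_log2} asserts that $\frac{1}{k}\log T_k(1_A)$ is an asymptotically Toeplitz operator of Schatten class with symbol $\phi(h^L, A)$. Since such operators have uniformly bounded operator norm by Definition~\ref{defn_toepl_schatten}, their spectra lie in a fixed compact interval, and applying Proposition~\ref{prop_weak_convtt} to a bounded continuous function that agrees with $g(x) = x$ on this interval gives
\[
\lim_{k \to \infty} \frac{\operatorname{Tr}\bigl(\log T_k(1_A)\bigr)}{k \cdot \dim H^0(X, L^{\otimes k})} = \int_X \phi(h^L, A) \cdot \frac{c_1(L, h^L)^n}{\int_X c_1(L, h^L)^n}.
\]
Combined with the previous display, (\ref{eq_balls}) reduces to the identity
\[
\mathscr{E}(h^L_{{\rm{Leb}}, A}) - \mathscr{E}(h^L) = - \int_X \phi(h^L, A) \cdot \frac{c_1(L, h^L)^n}{\int_X c_1(L, h^L)^n}.
\]

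To establish this identity, let $u_0$ and $u_1$ denote the potentials of $h^L$ and $h^L_{{\rm{Leb}}, A}$, and let $u_t$ be the weak Mabuchi geodesic from $u_0$ to $u_1$ from (\ref{eq_env_geod}); the sign convention of (\ref{defn_phi}) gives $\phi(h^L, A) = -\dot{u}_0$. I would then invoke the affinity of the Monge--Amp\`ere energy $\mathscr{E}$ along weak Mabuchi geodesics, due to Mabuchi and Berndtsson in the smooth setting and extended to bounded plurisubharmonic endpoints in the variational theory of Berman--Boucksom--Guedj--Zeriahi. Combined with the standard first-variation formula for $\mathscr{E}$, this yields
\[
\mathscr{E}(u_1) - \mathscr{E}(u_0) = \left.\tfrac{d}{dt}\mathscr{E}(u_t)\right|_{t=0} = \int_X \dot{u}_0 \cdot \frac{(\omega + \imun\partial\dbar u_0)^n}{\int_X \omega^n} = - \int_X \phi(h^L, A) \cdot \frac{c_1(L, h^L)^n}{\int_X c_1(L, h^L)^n},
\]
as required. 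The \textbf{main obstacle} is precisely this last step: rigorously justifying both the affinity of $\mathscr{E}$ and the first-variation formula along the weak Mabuchi geodesic when the endpoint $h^L_{{\rm{Leb}}, A}$ is only bounded and possibly discontinuous. This forces one to work with the Bedford--Taylor Monge--Amp\`ere measure and the extended energy (\ref{eq_energy_ext}), and to check that the relevant weak limits commute. Matching the opposite sign conventions for $\phi$ in (\ref{defn_phi}) and (\ref{eq_speed_mab_geod}) is a minor but easy-to-overlook bookkeeping point.
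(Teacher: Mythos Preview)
Your approach is genuinely different from the paper's and is essentially correct when $h^L$ is smooth and positive, but it does not cover the full statement of Theorem~\ref{thm_balls}, which allows $h^L$ to be merely continuous with psh potential. The issue is that Theorem~\ref{thm_log2} (and the underlying Theorem~\ref{thm_trasnfer}) are established under the standing assumption from the Introduction that $h^L$ is a \emph{smooth positive} metric; this is needed to make sense of the Toeplitz operators $T_k(\phi(h^L,NZ))$ and to run the Bergman-kernel machinery. For a merely continuous $h^L$ with psh potential, your invocation of Theorem~\ref{thm_log2} is not justified, and patching this by approximation would require precisely the kind of $d_p$-continuity arguments that the paper's proof employs directly.

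The paper instead bypasses Theorem~\ref{thm_log2}: it observes that the left-hand side of (\ref{eq_balls}) equals $\frac{1}{k}d_1\big({\textrm{Hilb}}_k(A,h^L),{\textrm{Hilb}}_k(h^L)\big)$ by (\ref{eq_vol_d1}), replaces ${\textrm{Hilb}}_k(A,h^L)$ by ${\textrm{Hilb}}_k(h^L_{{\rm{Leb}},A},\chi)$ via the $p$-equivalence of Theorem~\ref{thm_equiv_2} (valid for continuous $h^L$), applies the quantization result (\ref{eq_berndt}) of Darvas--Lu--Rubinstein, and concludes with Darvas's identity $d_1(h^L_0,h^L_1)=\mathscr{E}(h^L_0)-\mathscr{E}(h^L_1)$ for ordered metrics, (\ref{eq_d1_ener}). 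Your ``main obstacle''---the affinity of $\mathscr{E}$ along weak geodesics together with the first-variation formula at the smooth endpoint---is in fact equivalent to this last Darvas identity combined with the constant-speed property $d_1(h^L_0,h^L_1)=\int_X|\dot u_0|\,\omega_{u_0}^n/\int_X\omega^n$, so the underlying pluripotential input is the same; the paper's route simply packages it so as to avoid both the smoothness restriction on $h^L$ and the spectral detour through Theorem~\ref{thm_log2}.
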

	\begin{rem}
		When the subset $A$ is a closed subset $K \subset X$, such that $(K, h^L)$ is pluriregular and the Lebesgue measure on $K$ is determining for $(K, h^L)$, the result was previously established by Berman-Boucksom \cite{BermanBouckBalls}, with $h^L_K$  in place of $h^L_{{\rm{Leb}}, A}$ on the right-hand side of (\ref{eq_balls}).
		Our results are compatible by Proposition \ref{prop_plurireg}.
	\end{rem}
	\begin{proof}
		Remark that we trivially have ${\textrm{Hilb}}_k(A, h^L) \leq {\textrm{Hilb}}_k(h^L)$.
		From this and (\ref{eq_vol_d1}), we deduce
		\begin{equation}\label{eq_d_1_vol}
			\frac{\log {\rm{vol}} ({\textrm{Hilb}}_k(A, h^L)) - \log {\rm{vol}} ({\textrm{Hilb}}_k(h^L))}{\dim H^0(X, L^{\otimes k})} 
			=
			d_1
			\Big(
			{\textrm{Hilb}}_k(A, h^L), {\textrm{Hilb}}_k(h^L)
			\Big).
		\end{equation}
		By Theorem \ref{thm_asympt_bm} and the fact that $d_1$ satisfies the triangle inequality, cf. \cite[Theorem 1.1]{DarvLuRub}, we have
		\begin{equation}\label{eq_d_1_vol2}
			\limsup_{k \to \infty}
			\frac{1}{k}
			d_1
			\Big(
			{\textrm{Hilb}}_k(A, h^L), {\textrm{Hilb}}_k(h^L)
			\Big)
			=
			\limsup_{k \to \infty}
			\frac{1}{k}
			d_1
			\Big(
			{\textrm{Hilb}}_k(h^L_{{\rm{Leb}}, A}, \chi), {\textrm{Hilb}}_k(h^L)
			\Big),
		\end{equation}
		where $\chi$ is an arbitrary smooth volume form. 
		Also, by (\ref{eq_berndt}), we have
		\begin{equation}\label{eq_d_1_vol3}
			\limsup_{k \to \infty}
			\frac{1}{k}
			d_1
			\Big(
			{\textrm{Hilb}}_k(h^L_{{\rm{Leb}}, A}, \chi), {\textrm{Hilb}}_k(h^L)
			\Big)
			=
			d_1(h^L_{{\rm{Leb}}, A}, h^L).
		\end{equation}
		\par 
		Darvas proved in \cite[Corollary 4.14]{DarvasFinEnerg} that for any bounded metrics $h^L_0$, $h^L_1$ with psh potentials, verifying $h^L_0 \leq h^L_1$, we have
		\begin{equation}\label{eq_d1_ener}
			d_1(h^L_0, h^L_1) = \mathscr{E}(h^L_0) - \mathscr{E}(h^L_1).
		\end{equation}
		We finish the proof by (\ref{eq_d_1_vol})-(\ref{eq_d1_ener}) and the trivial bound $h^L_{{\rm{Leb}}, A} \leq h^L$.
	\end{proof}
	\par 

	\section{Generalized Toeplitz operators and Toeplitz matrices}\label{sect_gen_toepl}
	 The main goal of this section is to generalize the main results of this article to the setting of generalized Toeplitz operators associated with an arbitrary non-pluripolar Borel measure. 
	 We then make a connection between this generalized setting and the classical theory of Toeplitz matrices.
	 \par 
	 We consider a non-pluripolar probability Borel measure $\mu$ on $X$ with support $K$.
	 Let $f \in L^{\infty}(K, \mu)$ be a fixed function, with $K'$ denoting its essential support.
	 For brevity, we let $\mu' := f \cdot \mu$.
	 Remark that $\mu'$ is non-pluripolar.
	 \par 
	 For $k \in \nat^*$, we denote by $T_k(f, \mu) \in {\enmr{H^0(X, L^{\otimes k})}}$ the \textit{Generalized Toeplitz operator with symbol} $f$, i.e. $T_k(f, \mu) := B_k(\mu) \circ M_k(f)$, where $B_k(\mu) : L^{\infty}(K, L^{\otimes k}) \to H^0(X, L^{\otimes k})$ is the orthogonal projection to $H^0(X, L^{\otimes k})$ with respect to ${\textrm{Hilb}}_k(h^L, \mu)$, and $M_k(f) : H^0(X, L^{\otimes k}) \to L^{\infty}(K, L^{\otimes k})$ is the restriction on $K$, composed with a multiplication map by $f$.
	 \par 
	 \begin{thm}\label{thm_toepl_gen}
	 	For any $f \in L^{\infty}(K, \mu)$, $f \neq 0$, there is $c > 0$ such that for any $\lambda_{\min}(T_k(f, \mu)) \geq \exp(- ck)$ for $k \in \nat$ big enough.
	 	\par 
	 	We will now assume that $(K, h^L)$ (resp. $(K', h^L)$) is $\mu$-pluriregular (resp. $\mu'$-pluriregular) and $\mu(K' \cap f^{-1}(0)) = 0$.
		We let $c(f) := \max_{x \in X} \log(h^L_{\mu}(x) / h^L_{\mu'}(x))$.
		Then for any $\epsilon > 0$, there is $k_0 \in \nat$, such that for any $k \geq k_0$, we have
		\begin{equation}\label{eqthm_toepl_gen1}
			\exp(- (c(f) + \epsilon) k) \leq \lambda_{\min}(T_k(f, \mu)) \leq \exp(- (c(f) - \epsilon) k).
		\end{equation}
		Moreover, if we denote by $\delta[\cdot]$ the Dirac mass, then the sequence of probability measures
		\begin{equation}\label{eqthm_toepl_gen2}
			\eta_k := 
			\lim_{k \to \infty} \frac{1}{\dim H^0(X, L^{\otimes k})} \sum_{\lambda \in {\rm{Spec}} (T_k(f, \mu))}  \delta \Big[ -\frac{\log(\lambda)}{k} \Big]
		\end{equation}
		converges weakly to the (unique) probability measure $\eta$ on $\real$, verifying $\int_{\real} x^p d \eta(x) = d_p(h^L_{\mu}, h^L_{\mu'})$, where $d_p$ is the Darvas distance from (\ref{sect_mabuch}).
	\end{thm}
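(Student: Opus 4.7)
My plan parallels the treatment of Theorems \ref{thm_regul} and \ref{thm_distr}. The crucial starting observation is that $-\log T_k(f, \mu)$ is the transfer map between ${\textrm{Hilb}}_k(h^L, \mu)$ and ${\textrm{Hilb}}_k(h^L, \mu')$, where $\mu' := f \cdot \mu$; this follows directly from the identity $\scal{T_k(f, \mu) s_1}{s_2}_{{\textrm{Hilb}}_k(h^L, \mu)} = \scal{s_1}{s_2}_{{\textrm{Hilb}}_k(h^L, \mu')}$. Everything will be obtained by analyzing this transfer map asymptotically, comparing the weighted norms with the envelope $L^2$-norms ${\textrm{Hilb}}_k(h^L_\mu, \chi)$ and ${\textrm{Hilb}}_k(h^L_{\mu'}, \chi)$.

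For the first (unconditional) bound, I would use Theorem \ref{thm_asympt_bm}: for a continuous metric $h^L_0$ with psh potential satisfying $h^L_{\mu'} \geq h^L_0$, and analogously for $h^L_\mu$, it gives ${\textrm{Hilb}}_k(h^L, \mu') \geq C \exp(-\epsilon k) \cdot {\textrm{Hilb}}_k(h^L_0, \chi)$. Combining this with the trivial upper bound ${\textrm{Hilb}}_k(h^L, \mu) \leq {\textrm{Ban}}_k^{\infty}(X, h^L)$ and classical Bernstein-Markov for the Lebesgue measure on $X$, Theorem \ref{thm_trasnfer} and Proposition \ref{prop_equiv_tranf} yield an $O(k)$ bound on the operator norm of $-\log T_k(f, \mu)$, whence the desired $\exp(-ck)$ lower bound on $\lambda_{\min}(T_k(f, \mu))$.

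Under the pluriregularity hypotheses, Proposition \ref{prop_plurireg} and Theorem \ref{thm_bbwn_determining} imply that $\mu$ is Bernstein-Markov with respect to psh weights on $(K, h^L)$; combining this with (\ref{rem_linf_subs}) and the Bernstein-Markov property for the Lebesgue measure applied to the now-continuous envelope $h^L_\mu$ gives $\oplus_k {\textrm{Hilb}}_k(h^L, \mu) \sim \oplus_k {\textrm{Hilb}}_k(h^L_\mu, \chi)$, and similarly for $\mu'$. The condition $\mu(K' \cap f^{-1}(0)) = 0$ forces mutual absolute continuity of $\mu$ and $\mu'$ on $K'$, and together with $\mu' \ll \mu$ yields $h^L_{\mu'} \leq h^L_\mu$ (hence $c(f) \geq 0$). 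Proposition \ref{prop_equiv_tranf} with $p = +\infty$ then gives
\begin{equation*}
\Big\| -\tfrac{1}{k} \log T_k(f, \mu) - \tfrac{1}{k} T_k(h^L_\mu, h^L_{\mu'}) \Big\| \to 0,
\end{equation*}
and the second part of Theorem \ref{thm_trasnfer} identifies the right-hand side as asymptotically Toeplitz, in the uniform sense, with continuous symbol $\phi(h^L_\mu, h^L_{\mu'})$. Proposition \ref{prop_cont_speed} gives $\max_X \phi(h^L_\mu, h^L_{\mu'}) = c(f)$, from which (\ref{eqthm_toepl_gen1}) follows by reading off the largest eigenvalue of $-\tfrac{1}{k} \log T_k(f, \mu)$.

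For the weak convergence of $\eta_k$ in (\ref{eqthm_toepl_gen2}), I would deduce it from an appropriate variant of Proposition \ref{prop_weak_convtt} adapted to the Hermitian structure ${\textrm{Hilb}}_k(h^L, \mu) \sim {\textrm{Hilb}}_k(h^L_\mu, \chi)$: the pushforward base measure becomes $c_1(L, h^L_\mu)^n / \int_X c_1(L)^n$, reflecting the Bergman-kernel density of the continuous envelope. The $p$-th moments of the resulting $\eta$ then equal $\frac{1}{\int_X c_1(L)^n} \int_X \phi(h^L_\mu, h^L_{\mu'})^p \, c_1(L, h^L_\mu)^n$, which by (\ref{eq_finsl_dist_fir}) and the constant-speed property of the Mabuchi geodesic coincides with the Darvas-distance expression asserted. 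The principal technical hurdle is justifying this variant of Proposition \ref{prop_weak_convtt}, namely establishing Tian-type Bergman-kernel asymptotics with respect to the weighted norm ${\textrm{Hilb}}_k(h^L, \mu)$; I would approach this by combining the Bernstein-Markov equivalence above with the classical asymptotics for the continuous envelope $h^L_\mu$.
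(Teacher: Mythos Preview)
Your overall strategy is exactly the paper's: interpret $-\log T_k(f,\mu)$ as the transfer map between ${\textrm{Hilb}}_k(h^L,\mu)$ and ${\textrm{Hilb}}_k(h^L,\mu')$, invoke Theorem \ref{thm_asympt_bm} for the unconditional bound, and under the pluriregularity hypotheses deduce the equivalences $\oplus_k {\textrm{Hilb}}_k(h^L,\mu) \sim \oplus_k {\textrm{Hilb}}_k(h^L_\mu,\chi)$ and $\oplus_k {\textrm{Hilb}}_k(h^L,\mu') \sim \oplus_k {\textrm{Hilb}}_k(h^L_{\mu'},\chi)$ via Proposition \ref{prop_plurireg} and Theorem \ref{thm_bbwn_determining}, just as in Corollary \ref{thm_equiv_1}.

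Two technical points deserve care in your treatment of (\ref{eqthm_toepl_gen1}). First, Theorem \ref{thm_trasnfer} is stated for a \emph{smooth positive} base $h^L_0$, whereas $h^L_\mu$ is only continuous; you cannot invoke it directly with $T_k(h^L_\mu, h^L_{\mu'})$. Second, Proposition \ref{prop_equiv_tranf} compares transfer maps with a \emph{common} base $H_k^0$, but $-\log T_k(f,\mu)$ and $T_k(h^L_\mu, h^L_{\mu'})$ are self-adjoint with respect to different (though $\sim_\infty$-equivalent) inner products, so the displayed operator-norm comparison is not quite what the proposition delivers. Both issues dissolve if you argue at the level of spectra: the logarithmic relative spectrum $\lambda_j(N_1,N_2)$ is stable under $\sim_\infty$-perturbation of either norm directly from the min--max formula (\ref{eq_log_rel_spec}), so the spectra of the two transfer maps differ by $o(k)$; the extremal eigenvalue of $\frac{1}{k}T_k(h^L_\mu,h^L_{\mu'})$ is then handled by Demailly-regularizing $h^L_\mu$ and applying the second part of Theorem \ref{thm_trasnfer} together with Proposition \ref{prop_cont_speed}.

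For (\ref{eqthm_toepl_gen2}), the hurdle you identify is avoidable, and the moment formulation of the limit in the statement itself hints at the shortcut. The $p$-th absolute moment of $\eta_k$ is, up to a fixed factor, $k^{-p} d_p\big({\textrm{Hilb}}_k(h^L,\mu), {\textrm{Hilb}}_k(h^L,\mu')\big)^p$; by the $\sim_\infty$-equivalences and the triangle inequality this agrees asymptotically with $k^{-p} d_p\big({\textrm{Hilb}}_k(h^L_\mu,\chi), {\textrm{Hilb}}_k(h^L_{\mu'},\chi)\big)^p$, which converges to $d_p(h^L_\mu,h^L_{\mu'})^p$ by (\ref{eq_berndt}). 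Since the $\eta_k$ are supported in a fixed bounded interval (from the first part of the theorem), convergence of all moments gives weak convergence, and no Bergman-kernel variant of Proposition \ref{prop_weak_convtt} for the non-smooth structure is needed.
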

	\begin{sloppypar}
	\begin{proof}
		The first statement is equivalent to the fact that there is $c > 0$, such that for any $k \in \nat^*$,
		\begin{equation}
			{\textrm{Hilb}}_k(h^L, \mu') \geq \exp(-ck) \cdot {\textrm{Hilb}}_k(h^L, \mu),
		\end{equation}
		which follows immediately from Theorem \ref{thm_asympt_bm}, as it shows that we have ${\textrm{Hilb}}_k(h^L, \mu') \geq \exp(-c k) \cdot {\textrm{Ban}}_k^{\infty}(h^L)$ for a certain $c > 0$ and $k \in \nat^*$.
		\par 
		We note that by Proposition \ref{prop_plurireg} and Theorem \ref{thm_bbwn_determining}, similarly to the proof of Corollary \ref{thm_equiv_1}, for an arbitrary volume form $\chi$ on $X$, we have
		\begin{equation}
		\begin{aligned}
			&
			\oplus_{k = 0}^{\infty} {\textrm{Hilb}}_k(h^L, \mu) \sim \oplus_{k = 0}^{\infty} {\textrm{Hilb}}_k(h^L_{\mu}, \chi),
			\\
			&
			\oplus_{k = 0}^{\infty} {\textrm{Hilb}}_k(h^L, \mu') \sim \oplus_{k = 0}^{\infty} {\textrm{Hilb}}_k(h^L_{\mu'}, \chi).
		\end{aligned}
		\end{equation}
		From this point, the proof of all the other statements proceed in an identical manner with the proofs of Theorems \ref{thm_regul}, \ref{thm_distr} and Corollary \ref{cor_exist}.
	\end{proof}
	\end{sloppypar}
	\begin{rem}
		Immediately from the proof and Proposition \ref{prop_approx_mult_linf}, we see that for (\ref{eqthm_toepl_gen2}), instead of pluriregularity of $\mu$ and $\mu'$, it suffices to require that the measure $\mu$ is approximable.
	\end{rem}
	\par 
	We will now specialize the above theorem in the setting of Toeplitz matrices.
	More specifically, let $\mathbb{S}^1$ be the unit circle and $f \in L^{\infty}(\mathbb{S}^1)$, $f \neq 0$, be a fixed real function, which we can be written as $f(\theta) = a_0 + \sum_{i = - \infty}^{+ \infty} a_j \exp(\imun j \theta)$, where $\theta \in [0, 2 \pi[$, gives a standard parametrization of $\mathbb{S}^1$. 
	Then $a_i = \overline{a}_{-i}$ and not all $a_i$ vanish.
	\par 
	Now, we embed $\mathbb{S}^1$ in $X := \mathbb{P}^1$ as one of the great circles (for concreteness given by $\theta \mapsto [1: \exp(i \theta)] \in \mathbb{P}^1$, $\theta \in [0, 2 \pi[$, where $[1 : z] \in \mathbb{P}^1$, $z \in \comp$, is a standard affine chart) and denote by $\mu$ the Lebesgue measure on $\mathbb{S}^1$, viewed as a measure on $X$.
	Remark that as $\mathbb{S}^1$ is totally real, the measure $\mu$ is non-pluripolar, see \cite{SadullaevReal}, cf. \cite[Exercise 4.39.5]{GuedjZeriahBook}. 
	We then take $L := \mathscr{O}(1)$, and in a standard basis of monomials of $H^0(X, L^{\otimes k})$ (given by $z^i$, $i = 0, \ldots, k$, in the already mentioned affine chart), the operator $T_k(f, \mu)$ writes as the following Toeplitz matrix 
	\begin{equation}
		T_k[a]
		:=
		\begin{bmatrix}
		a_0 & a_{-1} & a_{-2} & \cdots & a_{-k} \\
		a_1 & a_0 & a_{-1} & \cdots & a_{-k+1} \\
		a_2 & a_1 & a_0 & \cdots & a_{-k+2} \\
		\vdots & \vdots & \vdots & \ddots & \vdots \\
		a_k & a_{k-1} & a_{k-2} & \cdots & a_0
		\end{bmatrix}.
	\end{equation}
	Remark that the matrix is Hermitian by our assumption on $f$.
	\par 
	The study of Toeplitz matrix spectra was profoundly influenced by the seminal works of Szegő \cite{SzegoFST}, \cite{SzegoSND}. 
	For an overview of this extensive field, see \cite{ToeplitzSurvey}, \cite{NikolskiBook}. The smallest eigenvalue has also been a central topic in this theory, as explored in \cite{WidomToepl}, \cite{KacMurdockSzego}, \cite{ParterToepl}.
	However, to the best of the author's knowledge, its exponential decay has not yet been thoroughly investigated.
	The two results below shed some light on this question.
	\par 
	The following result is an immediate consequence of the above interpretation of Toeplitz matrices, Theorem \ref{thm_toepl_gen} and the fact that $(\mathbb{S}^1, h^L)$ is $\mu$-pluriregular in $\mathbb{P}^1$, cf. \cite[Theorem 2.22]{BermanPriors}.
	\begin{cor}
		The smallest eigenvalue $\lambda_{\min}(T_k[a])$ of $T_k[a]$ decays at most exponentially, as $k \to \infty$, i.e. there is $c > 0$ such that $\lambda_{\min}(T_k[a]) \geq \exp(- ck)$ for any $k \in \nat$ big enough.
	\end{cor}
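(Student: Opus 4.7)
The plan is to reduce the claim to the first assertion of Theorem~\ref{thm_toepl_gen} via the identification $T_k[a] = T_k(f,\mu)$ that has been set up in the paragraphs preceding the statement, with $X = \mathbb{P}^1$, $L = \mathscr{O}(1)$ carrying a positive (Fubini--Study) metric $h^L$, and $\mu$ the Lebesgue measure on the great circle $\mathbb{S}^1 \hookrightarrow \mathbb{P}^1$. Under this identification, $\lambda_{\min}(T_k[a]) = \lambda_{\min}(T_k(f,\mu))$, so any exponential lower bound furnished by Theorem~\ref{thm_toepl_gen} will transfer verbatim to the Toeplitz matrix.

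To invoke that theorem I only need to verify the hypotheses of its first assertion. The symbol $f$ is nonzero because not all $a_j$ vanish. It remains to show that $\mu$, viewed as a Borel measure on $\mathbb{P}^1$, is non-pluripolar. For this I would appeal to Sadullaev's classical theorem \cite{SadullaevReal}: every totally real submanifold of positive real dimension in a complex manifold is non-pluripolar. As noted in the text, $\mathbb{S}^1 \subset \mathbb{P}^1$ is totally real (trivially, being a smooth real curve inside a complex curve), so its Lebesgue measure is non-pluripolar on $\mathbb{P}^1$.

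With these two inputs in hand, the first part of Theorem~\ref{thm_toepl_gen} yields $c > 0$ and $k_0 \in \nat$ such that $\lambda_{\min}(T_k(f,\mu)) \geq \exp(-ck)$ for every $k \geq k_0$, which, through the identification above, is exactly the desired conclusion for $T_k[a]$. There is essentially no serious obstacle here: all the analytic work is already encapsulated in Theorem~\ref{thm_asympt_bm} (which underlies Theorem~\ref{thm_toepl_gen}), where the existence of a continuous metric $h^L_0$ dominated by $h^L_\mu$ forces $\mathrm{Hilb}_k(h^L,\mu) \geq C \exp(-\epsilon k)\cdot \mathrm{Ban}_k^{\infty}(h^L_0)$, and this bound is precisely what translates into the exponential lower bound on the smallest eigenvalue of the associated generalized Toeplitz operator. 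The only ingredient specific to the Toeplitz-matrix setting is the non-pluripolarity of the circle, which is handled by Sadullaev's theorem above.
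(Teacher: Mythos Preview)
Your proposal is correct and follows essentially the same route as the paper: reduce to the first assertion of Theorem~\ref{thm_toepl_gen} via the identification $T_k[a]=T_k(f,\mu)$, checking only that $f\neq 0$ and that $\mu$ is non-pluripolar (via Sadullaev's theorem on totally real submanifolds). The paper's one-line justification additionally cites the $\mu$-pluriregularity of $(\mathbb{S}^1,h^L)$, but as you correctly observe, that hypothesis is not required for the bare exponential lower bound; it enters only in the next corollary where the precise exponent $c(f)$ is identified.
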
	
	We will now assume that $K = \esssupp f$ consists of a finite union of intervals and the Lebesgue measure of the set $K \cap f^{-1}(0)$ is zero.
	We denote by $h^L_{\mathbb{S}^1}$ and $h^L_K$ the psh envelopes associated with the Fubini-Study metric on $L$ and the sets $\mathbb{S}^1$ and $K$ respectively, both viewed as subsets in $\mathbb{P}^1$.
	The following result is an immediate consequence of Theorem \ref{thm_toepl_gen} and the fact that $(K, h^L)$ is $\mu'$-pluriregular in $\mathbb{P}^1$, which follows from \cite[Theorem 2.22]{BermanPriors}.
	\begin{cor}
		We let $c(f) := \max_{x \in \mathbb{P}^1} \log(h^L_{\mathbb{S}^1}(x) / h^L_K(x))$.
		Then for any $\epsilon > 0$, there is $k_0 \in \nat$, such that for any $k \geq k_0$, we have
		\begin{equation}\label{eqthm_toepl_gen12}
			\exp(- (c(f) + \epsilon) k) \leq \lambda_{\min}(T_k[a]) \leq \exp(- (c(f) - \epsilon) k).
		\end{equation}
		Moreover, the sequence of probability measures
		\begin{equation}\label{eqthm_toepl_gen22}
			\eta_k := 
			\lim_{k \to \infty} \frac{1}{k + 1} \sum_{\lambda \in {\rm{Spec}} (T_k[a])}  \delta \Big[ -\frac{\log(\lambda)}{k} \Big]
		\end{equation}
		converges weakly to the (unique) probability measure $\eta$ on $\real$, verifying $\int_{\real} x^p d \eta(x) = d_p(h^L_{\mathbb{S}^1}, h^L_K)$, for any $p \in [1, +\infty[$, where $d_p$ was defined in Section \ref{sect_mabuch}.
	\end{cor}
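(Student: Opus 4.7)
The plan is to apply Theorem \ref{thm_toepl_gen} to the data $X = \mathbb{P}^1$, $L = \mathscr{O}(1)$ with its Fubini-Study metric, the arc-length measure $\mu$ on the embedded circle $\mathbb{S}^1 \subset \mathbb{P}^1$, and the symbol $f$. The identification of $T_k[a]$ with the generalized Toeplitz operator $T_k(f, \mu)$ in the monomial basis of $H^0(\mathbb{P}^1, \mathscr{O}(k))$ has already been carried out in the paragraph preceding the statement, so the proof reduces to verifying the hypotheses of Theorem \ref{thm_toepl_gen} and translating the output back into the classical envelope notation used in the corollary.

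For the hypotheses: in the notation of Theorem \ref{thm_toepl_gen}, the support of $\mu$ is $\mathbb{S}^1$ and the essential support $K'$ of $f$ is the set $K$ of the corollary (a small notational clash). The standing assumption that the Lebesgue measure of $K \cap f^{-1}(0)$ vanishes is exactly $\mu(K' \cap f^{-1}(0)) = 0$. The non-pluripolarity of $\mu$ is Sadullaev's theorem for the totally real submanifold $\mathbb{S}^1 \subset \mathbb{P}^1$, and it transfers to $\mu' := f \cdot \mu$ since $\mu'$ is absolutely continuous with respect to $\mu$. Finally, the $\mu$-pluriregularity of $(\mathbb{S}^1, h^L)$ and the $\mu'$-pluriregularity of $(K, h^L)$ are both instances of \cite[Theorem 2.22]{BermanPriors} for real-analytic subsets of $\mathbb{P}^1$.

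With the hypotheses verified, Theorem \ref{thm_toepl_gen} immediately delivers \eqref{eqthm_toepl_gen12} with $c(f) = \max_{\mathbb{P}^1} \log(h^L_\mu / h^L_{\mu'})$ and the weak convergence of $\eta_k$ to the unique probability measure on $\real$ whose $p$-th moments equal $d_p(h^L_\mu, h^L_{\mu'})^p$. To recover the formulas stated in the corollary, I invoke Proposition \ref{prop_plurireg}: under $\mu$- and $\mu'$-pluriregularity the non-negligible psh envelopes coincide with the classical Siciak--Zeriahi psh envelopes, so $h^L_\mu = h^L_{\mathbb{S}^1}$ and $h^L_{\mu'} = h^L_K$. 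Substitution then yields the corollary verbatim.

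The only nontrivial ingredient beyond the machinery already developed in the paper is the pluriregularity of $\mathbb{S}^1$ and of a finite union of arcs inside $\mathbb{P}^1$, which is imported from \cite{BermanPriors}; accepting that citation, there is no real obstacle, and the corollary is a straightforward specialization of Theorem \ref{thm_toepl_gen}.
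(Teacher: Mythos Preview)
Your proposal is correct and follows essentially the same route as the paper, which simply records the result as an immediate consequence of Theorem \ref{thm_toepl_gen} together with the $\mu'$-pluriregularity of $(K,h^L)$ supplied by \cite[Theorem 2.22]{BermanPriors}. Your explicit invocation of Proposition \ref{prop_plurireg} to pass from the non-negligible envelopes $h^L_\mu$, $h^L_{\mu'}$ appearing in Theorem \ref{thm_toepl_gen} to the classical psh envelopes $h^L_{\mathbb{S}^1}$, $h^L_K$ used in the corollary is a detail the paper leaves implicit, and is a welcome clarification.
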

	 
\bibliography{bibliography}

		\bibliographystyle{abbrv}

\Addresses

\end{document}